\documentclass[10pt]{amsart}


\usepackage{a4wide}

\usepackage{amssymb,amsmath,bbm,bm,amsthm}
\usepackage{mathtools}
\usepackage{color}
\usepackage{enumitem}
\usepackage[all, cmtip]{xy}
\usepackage{xcolor}
\usepackage{footnote}
\usepackage{etoolbox,mathrsfs}
\usepackage[T1]{fontenc}
\usepackage[utf8]{inputenc}

\usepackage[colorlinks, pagebackref]{hyperref}
\hypersetup{
  colorlinks=true,
  citecolor=blue,
  linkcolor=blue,
  urlcolor=blue}
\usepackage{cleveref}

\usepackage[colorinlistoftodos, textsize=tiny]{todonotes}
\makeatletter
\providecommand\@dotsep{5}
\def\listtodoname{List of Todos}
\def\listoftodos{\@starttoc{tdo}\listtodoname}
\makeatother

\makeatletter
\def\@tocline#1#2#3#4#5#6#7{\relax
  \ifnum #1>\c@tocdepth 
  \else
    \par \addpenalty\@secpenalty\addvspace{#2}%
    \begingroup \hyphenpenalty\@M
    \@ifempty{#4}{%
      \@tempdima\csname r@tocindent\number#1\endcsname\relax
    }{%
      \@tempdima#4\relax
    }%
    \parindent\z@ \leftskip#3\relax \advance\leftskip\@tempdima\relax
    \rightskip\@pnumwidth plus4em \parfillskip-\@pnumwidth
    #5\leavevmode\hskip-\@tempdima
      \ifcase #1
      \or\or \hskip 2em \or \hskip 2homologyem \else \hskip 3em \fi%
      #6\nobreak\relax
    \dotfill\hbox to\@pnumwidth{\@tocpagenum{#7}}\par
    \nobreak
    \endgroup
  \fi}
\makeatother

\setcounter{tocdepth}{2}

\theoremstyle{plain}

\newtheorem{theorem}{Theorem}[section]

\newtheorem{lemma}[theorem]{Lemma}
\newtheorem{corollary}[theorem]{Corollary}
\newtheorem{proposition}[theorem]{Proposition}

\theoremstyle{definition}

\newtheorem{notation}[theorem]{Notation}
\newtheorem{remark}[theorem]{Remark}

\newtheorem{definition}[theorem]{Definition}
\newtheorem{example}[theorem]{Example}

\numberwithin{equation}{subsection}

\newlist{myenum}{enumerate}{1}
\setlist[myenum,1]{wide, labelwidth=!, labelindent=0pt, label=(\arabic*)}


\newcommand{\ilim}{\mathop{\varprojlim}\limits} 
\newcommand{\dlim}{\mathop{\varinjlim}\limits}  



\DeclareMathOperator{\Sm}{Sm}
\DeclareMathOperator{\eSm}{\overline{Sm}}
\DeclareMathOperator{\Fun}{Fun} 
\DeclareMathOperator{\Tri}{Tri} 

\newcommand{\Sch}{\mathscr Sch} 

\DeclareMathOperator{\colim}{colim} 
\DeclareMathOperator{\nrv}{N} 

\DeclareMathOperator{\codim}{codim}

\DeclareMathOperator{\Ker}{Ker}
\DeclareMathOperator{\coker}{Coker}
\DeclareMathOperator{\op}{op}

\DeclareMathOperator{\uHom}{\underline{Hom}}

\DeclareMathOperator{\Map}{Map}

\DeclareMathOperator{\im}{Im}
\DeclareMathOperator{\Spec}{Spec}

\newcommand{\GL}{{\rm GL}\,}        
\newcommand{\Ho}{{\rm Ho}}
\newcommand{\ie}{{\it i.e.\/},\ }

\renewcommand{\tilde}{\widetilde}

\DeclareMathOperator{\Et}{Et}
\newcommand{\et}{\mathrm{et}}
\newcommand{\nis}{\mathrm{Nis}}
\newcommand{\zar}{\mathrm{Zar}}

\newcommand{\N}{{\mathbb N}}

\newcommand{\Z}{{\mathbb Z}}

\newcommand{\piA}{{\bm \pi}^{\A^1}}   		

\newcommand{\A}{\mathbb A}

\renewcommand{\AA}{\mathbb A}
\newcommand{\PP}{\mathbb P}

\newcommand{\cG}{\mathcal G}

\def\<{\langle}
\def\>{\rangle} 
\def\-{\overline} 
\def\~{\widetilde}
\def\^{\widehat}

\def\@{\mathcal}
\def\!{\mathscr}
\def\#{\mathbb}
\def\&{\mathbf}
\def\_{\underline}
\def\Dot{\bullet} 
\def\x{\times}

\def\.{\cdot}


\newcommand{\E}{\mathscr E} 
\newcommand{\C}{\mathscr C} 
\newcommand{\T}{\mathscr T} 
\newcommand{\disc}{\mathrm{disc}} 

\DeclareMathOperator{\id}{Id} 

\newcommand{\cX}{\mathcal X} 
\newcommand{\cY}{\mathcal Y} 
\newcommand{\cO}{\mathcal O} 

\newcommand{\sX}{\mathfrak X} 

\DeclareMathOperator{\Sh}{Sh} 
\newcommand{\iSh}{\operatorname{Sh}^\infty} 
\newcommand{\ihSh}{\widehat{\operatorname{Sh}}^\infty} 
\newcommand{\iPSh}{\operatorname{PSh}^\infty} 
\newcommand{\iDer}{\operatorname{D}^\infty} 
\DeclareMathOperator{\HA}{\mathscr H_{\A^1}} 

\newcommand{\iSet}{\mathscr S} 

\DeclareMathOperator{\flag}{\mathscr F}
\DeclareMathOperator{\uPi}{\underline\Pi}
\newcommand{\uE}{\underline E}

\DeclareMathOperator{\pro}{pro}

\DeclareMathOperator{\uG}{\underline \Gamma} 
\DeclareMathOperator{\uH}{\underline H} 

\newcommand{\pplim}[1] {\underset{#1}{\text{``$\varprojlim$''}}} 

\newcommand{\Set}{\mathscr Set}

\DeclareMathOperator{\Cz}{\underline{Cz}} 
\DeclareMathOperator{\Ge}{\underline{Ge}} 

\newcommand{\Coef}{\mathbb G} 


\begin{document}

\title[Spectral sequences in unstable higher homotopy theory]{Spectral sequences in unstable higher homotopy theory
 and applications to the coniveau filtration}
\author{Fr\'ed\'eric D\'eglise}
\address{CNRS, UMPA (UMR 5669), ENS de Lyon, UMPA, 46, alle\'{e} d'Italie, 69364, Lyon Cedex 07, France.}
\email{frederic.deglise@ens-lyon.fr}
\author{Rakesh Pawar}
\address{Harish-Chandra Research Institute, A CI of Homi Bhabha National Institute,
Chhatnag Road, Jhunsi, Prayagraj 211019, India.}
\email{rakeshpawar@hri.res.in, pawarrakesh.math@gmail.com}
\date{\today}

\keywords{}

\begin{abstract}
With the aim of understanding Morel's result on the $\A^1$-homotopy sheaves
 over a field, we extend the theory of unstable spectral sequences of Bousfield and Kan
 in the $\infty$-categorical setting. With this natural extension, parallel to the classical formalism of {\it cohomology theory with supports}, we introduce the notion of {\it cohomotopy theory with supports}.  
 We extend the Bloch-Ogus-Gabber theorem for Cohomology theory with supports to that of unstable setting,
 in order to obtain unstable Gersten (or Cousin) resolutions associated with the coniveau filtration,
 under suitable assumptions. 
 We apply this theory to motivic homotopy, Nisnevich-local torsors and Artin-Mazur \'etale homotopy types.
\end{abstract}

\maketitle
\tableofcontents

\section{Introduction}

The notion of spectral sequence for homotopy groups is classical in algebraic topology ---
 the first occurrence seems to be in \cite{Fed56}. It has been popularized
 by Bousfield and Kan in \cite[IX, \textsection 4]{BK72} as a tool to study towers of fibrations
 and obstruction theory for CW-complexes (equivalently, simplicial sets).

Next, in the stable case, it was put into a very general setup by Jacob Lurie,
 via stable $\infty$-categories and $t$-structures (\cite{LurieHA}). The primary aim of these notes
 is to extend these constructions to the \emph{unstable setting}, or rather
 to the setting of not-necessarily stable $\infty$-categories.
 This is based on a new notion of \emph{(co)homotopy functors} which
 play the role of (co)homology functors for triangulated categories
 (or in fact stable $\infty$-categories). See \Cref{sec:htp_functors}.
 Note in particular that
 we have chosen to work internally within a given $1$-topos to get a flexible enough
 category which receives "homotopy pointed objects and groups".
 We also take this opportunity to show that in any pointed $\infty$-category
 with finite limits/colimits, fiber/cofiber sequences
 satisfy the exact analog of the properties of a triangulated category,
 including the octahedron axiom. See \Cref{sec:fiber_seq}.
 This property is crucial in our study of unstable spectral sequences.

Once this theory has been developed, one can define the notion of unstable exact couple
 following \cite{Fed56, BK72}, as well as the associated unstable spectral sequence.
 Here, we follow the path of Bousfield and Kan,
 though we work internally within some fixed topos.
 We observe that there are two possible notions of unstable exact couples,
 and that the associated unstable spectral sequences are equivalent (up to a sign)
 using the unstable formulation of the octahedron axiom.
 We also introduce a useful notion of \emph{augmentation} which is critical to relate terms
 between successive pages.
 It plays
 a crucial role in deriving "resolutions" from unstable spectral sequences,
 as it provides canonical augmentations of the given resolutions
 (see \Cref{unstable-cx} and \Cref{df:coaugmented_cohtp_cpx}).

\bigskip

One of our main motivations for developing the above machinery is to obtain
 coniveau spectral sequences in an unstable setting, drawing strong inspiration
 from Morel's pioneering work in \cite{MorelLNM}.
 We get two types of results here.

First, we set up a general framework of coniveau spectral sequences in the
 homotopical/unstable setting, extending the setting from \cite{CHK}.
 This leads us to extend Grothendieck's definition of Cohen-Macaulay sheaves and
 the associated Cousin complexes to the non-abelian setting.
 We introduce an appropriate notion of being \emph{homotopy Cohen-Macaulay} for sheaves
 of groups on the Zariski site of a scheme $X$ (and also a weaker Nisnevich variant) in \Cref{df:CM-non-ab},
 and we draw out a kind of Cousin resolution to approximate the associated set of torsors 
 (see \Cref{prop:htpy-CM}). If the sheaf of groups is representable by a separated algebraic group scheme, we prove the following result
 (\Cref{thm:alg-gp-CM} in the main text).
\begin{theorem}\label{thm:alg-gp-CM} 
Let $X$ be a noetherian scheme which is regular in codimension less than $3$.
 Let $G$ be a separated algebraic group scheme over $X$, which we assume to be affine if $\dim(X)>1$.

Then $G$ is a homotopy Cohen-Macaulay sheaf on the small site $X_t$ for $t=\zar, \nis$.
\end{theorem}

As a consequence:
\begin{corollary}[\Cref{cor:alg-gp-CM} and \Cref{cor:dedekind}]
Let $X$ be a connected Dedekind scheme with function field $K$,
 and $G$ be a separated $X$-group scheme. Let $t$ be either the Zariski or Nisnevich topology.
 Then $G$ is homotopy Cohen-Macaulay on the small site $X_t$, and the canonical pointed map:
$$
H^1(X_t,G) \rightarrow \left({\prod}'_{x \in X^{(1)}}  H^1_x(X_t,G)\right)/G(K) \simeq G(\#O_X)\backslash G(\#A_X)/G(K)
$$
is a bijection, where $$G(\#O_X):= {\prod}'_{x \in X^{(1)}} G(X^t_{(x)})$$ and $$G(\#A_X):={\prod}'_{x \in X^{(1)}} G(X^t_{(x)}-\{x\}).$$
\end{corollary}

Here, the symbol ${\prod}'$ denotes the restricted product (see \Cref{num:restricted-product}).
 This result is well-known for a reductive groups and curves over a field;
 in particular, the case of $\GL_n$ and curves over a field is due to Weil.
 In fact, a close inspection of Morel's fundamental results for strongly $\AA^1$-invariant sheaves of groups $\cG$
 over a perfect field $k$ shows that the method of \cite[\textsection 2.2]{MorelLNM} does prove the above corollary
 for a smooth curve $X$ over $k$.\footnote{This is proved in our notes without assuming $k$ perfect,
 but one can also look at the clear account of Morel's argument from \cite{AD09}: use 4.9, 4.10 and 4.11 of \emph{loc. cit.}
 and the assumption $\dim(X)=1$.}
 We can further compare our statement with various enhancements of Weil's seminal theorem. 
 M. Groechenig~\cite[Theorem 0.1, Theorem 0.3]{Gro17} proved the statement for Noetherian scheme $X$ and $\@G$ representable
 by a Noetherian affine algebraic group scheme that is special (\ie every $G$-torsor on a Noetherian scheme is Zariski locally trivial).
 In contrast, the above statement holds for connected Dedekind scheme $X$ and any separated $X$-group scheme $G$. Additionally,
V. Chernousov, A. Rapinchuk and I. Rapinchuk considered in~\cite[Corollary B.7]{CRR20}
 the statement of the above corollary for $X=\Spec R$ where $R$ is a regular domain of Krull dimension $\leq 2$ and $G=GL_n$. For the case of a regular integral scheme of dimension 2 in \'etale topology and groups schemes which are either a smooth reductive group scheme or a finite type of multiplicative type, we make the following observations. 
 \begin{theorem}\label{dim2} For regular integral scheme $X$ of dimension 2 and $G$ either a smooth reductive $X$-group scheme or a finite type $X$-group scheme of multiplicative type, there is an exact sequence 
$$*\to {\prod}'_{x \in X^{(1)}}  H^1_x(X_{\et},G)/G(K)\xrightarrow{\gamma} H^1(X_{\et}, G)\xrightarrow{\theta} \im\bigg(H^1(X_{\et}, G)\to H^1((\Spec K)_{\et}, G)\bigg)\to * $$
of pointed sets.
\end{theorem}
\begin{corollary}For regular integral scheme $X$ of dimension 2, and  $G$ a smooth reductive $X$-group scheme such that $G$ is a special group (\ie Zariski cohomology and \'etale cohomology agree), there is a bijection
$$ {\prod}'_{x \in X^{(1)}}  H^1_x(X_{\et},G)/G(K)\xrightarrow{\gamma} H^1(X_{\et}, G).$$
In particular, this holds for $G=GL_n$ ($n\geq 1$). 
\end{corollary}

We next extend the \emph{trick of Gabber} to our unstable setting by applying it to a \emph{cohomotopy theory with supports}
 (see \Cref{num:cohtp_support} for a precise definition).
 This allows us to establish Gersten resolutions for certain homotopy sheaves of groups, pushing further the approach of Morel ---
 by allowing to drop perfectness assumptions in some cases (see \Cref{cor:unstable-BOG}). 
 
More generally, we have the following result (see \Cref{prop:Gabber&Gersten} in the main text):
 \begin{proposition}
Assume $\Pi_*$ is a cohomotopy theory with supports  defined on $\Sm_k$,
 where $k$ is an arbitrary field, that satisfies the Gabber property.

Then, for any essentially smooth semi-local (resp. local if $k$ is finite) $k$-scheme $X$,
 the unstable coniveau exact sequence $E_{1,c}^{**}(X,\Pi_*)$
 of $X$ with coefficients in $\Pi_*$ (see \Cref{df:coniv-ssp-E-coef})
 collapses on the column $*=0$ in the sense of \Cref{df:ssp-collapse}.
 \end{proposition}
 As a consequence, we deduce the following corollary.
 \begin{corollary}\label{cor:unstable-BOG}
Under the above assumptions, let $q>1$ be an integer.

Let $\uPi_q$ be the Zariski sheaf on $\Sm_k$ associated with $\Pi_q$. The following results then hold:
\begin{enumerate}
\item For any smooth $k$-scheme $X$, the sheaf $\uPi_q$ is Cohen-Macaulay up to degree $q$ on $X_\zar$
 and is fully Cohen-Macaulay if $\dim(X)\leq q$.
\item $\uPi_q$ is a Nisnevich sheaf on $\Sm_k$.
 For any $0\leq p<q$, and any smooth $k$-scheme $X$,
 there exist isomorphisms that are natural with respect to flat pullbacks:
$$
E_{2,c}^{p,q}(X,\Pi_*) \simeq H^p_\zar(X,\uPi_q) \simeq H^p_\nis(X,\uPi_q).
$$
\item For $t=\zar, \nis$, and any smooth $k$-scheme $X$,
 there exists a unique isomorphism of complexes of sheaves on $X_t$:
$$
\Ge^*(X_t,\Pi_*,q) \simeq \tau_{nv}^{\leq q} \Cz_t^*(\uPi_q^X)
$$
between the $t$-local Gersten complex in degree $q$ of $\Pi_*$ and the naively truncated
 $t$-local Cousin complex on the $t$-sheaf $\uPi_q$ restricted to the small Nisnevich site $X_t$.
 This isomorphism extends to the site of smooth $k$-schemes with morphisms being flat (equivalently, syntomic).

In particular, for any point $x \in X^{(p)}$, one obtains isomorphisms:
$$
\Pi_{q-p}^x(X_{(x)}) \simeq \Pi_{q-p}^x(X^h_{(x)}) \simeq H^{p}_x(X_\zar,\uPi_q) \simeq H^{p}_x(X_\nis,\uPi_q).
$$
\end{enumerate}
\end{corollary}

\begin{remark}
We extend this result in \Cref{cor:unstable-BOG} to get a \emph{weaker} Gersten property over positive-dimensional bases
 (see \Cref{eff2} and \Cref{cor:overbase}).
\end{remark}

Specifically, point (3) allows one to compute the $E_1$-term $E_{1,c}^{*,q}(X,\Pi_*)$,
 up to isomorphism rather than merely quasi-isomorphism, as the global sections of the naive truncation of the Cousin complex
 associated with the homotopy sheaf $\uPi_q^X$. Such a result was suggested by the terminology of
 \cite[p. 1172-1174]{AD09}.

We note that over a perfect field $k$, it follows from \cite[Th. 6.1]{MorelLNM} that
 for any pointed simplicial sheaf $\cX$ on the Nisnevich site $\Sm_k$,
 the sheaf of groups $\pi_1^{\AA^1}(\cX)$ is homotopy Cohen-Macaulay in the above sense
 over $X_t$, for any smooth $k$-scheme $X$. In this paper, we generalize this result
 to show that over any field, certain $\AA^1$-homotopy sheaves are Cohen-Macaulay in the sense of \cite{Hart66},
 up to some naive truncation as above (see \Cref{morel-CM} for details).
 This extends recent works of \cite{DFJ22} and \cite{DKO} on Cousin resolutions to the unstable setting.
 
 Finally, we apply our theory to Artin-Mazur \'etale homotopy types to obtain Gersten resolution in
 the homotopical setting without resorting to $\AA^1$-homotopy. This requires proving
 an effaceability property in \'etale homotopy (\Cref{prop:et-gabber} in the main text):
 \begin{theorem}\label{prop:et-gabber}
Let $k$ be a separably closed field and $\Coef$ be a pro-space.
 Then the cohomotopy theory with supports $\Pi_*(-;\Coef)$ restricted to the category $\Sm_k$ satisfies the Gabber property
 (\Cref{def:gabber}). In particular, it satisfies all the properties stated in the preceding corollary.
\end{theorem}

While the above result is well established when $\Coef$ is abelian, according to the Gabber's original proof,
 our theorem seems to be the first instance of a purely homotopical, non-abelian, statement.
 In particular, this result, along with our broader framework, substantially extend earlier considerations
 by Colliot-Th\'el\`ene \cite[\textsection 5.1]{Colliot95}.

In a companion work~\cite{DP25+}, we will generalize the comparison between the coniveau spectral sequence
 and the cohomological spectral sequence associated with the homotopy $t$-structure of \cite{Bondarko} and \cite{Deg13}
 to the unstable setting.
 Let us indicate finally that the $\infty$-categorical unstable spectral sequence machinery developed here
 can be applied to define an unstable motivic Adams spectral sequence,
 a potentially powerful tool for computations of unstable motivic homotopy sheaves.

\section*{Plan of this work}

The second section is the technical heart of the paper, and is of independent interest as it applies
 to $\infty$-categories with appropriate finite limits or colimits.

We first show that Verdier's axioms for triangulated categories are incarnated in
 unstable homotopy, which are now properties of pointed $\infty$-categories with appropriate
 limits (resp. colimits). We also highlight properties specific to the unstable setting
 such as the interaction between (homotopy) fiber and cofiber sequences, and an $\infty$-categorical
 definition of the \emph{monodromy} of a homotopy fiber sequence.
 We then introduce a new notion of homotopy functors playing the role of homological functors for $\infty$-categories.
 This is based on the notion of $\pi_*$-structure, and leads to the notion of homotopy complexes.
 This allows us to set up the machinery of unstable exact couples and unstable spectral sequences,
 drawing significant inspiration from \cite{BK72}. The main result of this section is the proof of a degeneracy criterion
 \Cref{thm:degeneracy}
 for unstable spectral sequences (which also applies to the stable setting and extends an original formulation
 of Quillen).

The third section applies the previous machinery to define coniveau spectral sequences in the unstable setting,
 inspired by Grothendieck's theory of Cousin (residual) complexes as exposed in \cite{Hart66}.
 We then study the induced complexes, called either Gersten or Cousin homotopical complexes depending on the context.
 We introduce relevant properties of these complexes, referred to as the Gersten and Cohen-Macaulay conditions,
 in accordance with domain-specific terminology. We study these conditions in the particular case of Eilenberg-MacLane
 complexes, leading to a homotopical analogue of the theory of Cohen-Macaulay sheaves of groups and Cousin complexes.
 We prove that separated algebraic groups $G$ give examples of such (see \Cref{thm:alg-gp-CM}) and deduce
 an adelic computation of the set of $G$-torsors (see  \Cref{cor:alg-gp-CM}).

The fourth section focuses on the trick of Gabber to deduce Gersten properties in various contexts.
 Notably, one applies this theory in \Cref{sec:AM-htp} to extend the results of \cite{CHK} to Artin-Mazur \'etale
 homotopy types. 

\section*{Notation}

One says that a pointed map is trivial if it is equal to the composite
 of the projection to the final object followed by the map to the base point.
 A final object in an $(\infty$-)category (if it exists) is generically denoted by $*$.
 We usually just say colimit/limit (resp. commutative) for homotopy colimit/limit (homotopically commutative) in an $\infty$-category.
 Sometimes we identify a $1$-category with its nerve when it is clearly intended by the context.

\section{Unstable exact couples and spectral sequences}

\subsection{Fiber sequences and monodromy action}\label{sec:fiber_seq}

\begin{notation}
Given a pointed $\infty$-category $\C$,\footnote{An $\infty$-category which admits an object which is both final and initial.}
  we will conventionally denote by $*$ an initial object of $\C$.
 Products (resp. coproducts) in $\C$ will be denoted by the symbol $\times$ (resp. $\vee$).

Recall the following classical definition (\cite[I, \textsection 3]{Quillen}, \cite[1.1.4]{LurieHA}).
\end{notation}
\begin{definition}\label{df:(co)fiber_seq}
Let $\C$ be a pointed $\infty$-category.
A \emph{triangle} in $\C$ is a commutative square of the form:
$$
\xymatrix@=8pt{
X\ar[r]\ar^f[r]\ar[d]\ar@{}|\Delta[rd] & Y\ar^g[d] \\ {}*\ar[r] & Z.
}
$$
Let $\Box$ be the nerve of the finite category 
 associated with the poset $\{(0,0), (1,0), (0,1), (1,1)\}$ with the lexicographic order. 
 The $\infty$-category of triangles is the sub-$\infty$-category $\Tri(\C)$ of the $\infty$-category of functors
 $\Fun(\Box,\C)$ made of objects $\Delta$ such that $\Delta(1,0)$ is 
 the zero object $*$.
 We will usually abusively denote triangles as a sequence $X \rightarrow Y \rightarrow Z$,
 the homotopy $gf \simeq *$ being implied.

Such a triangle is called a \emph{fiber} (\text{resp.} \emph{cofiber}) \emph{sequence} in $\C$ 
 if $\Delta$ is a (homotopy) pullback (\text{resp.} \emph{pushout}) square.
 In this case, $X$ (resp. $Z$) is uniquely determined by the pullback (resp. pushout) diagram
 $\Delta$, and one says that $X$ (or $(X,f)$) (resp. $Z$ (or $(Z,g)$) is the \emph{(homotopy) fiber} 
 (resp. \emph{cofiber}) of $g$ (resp. $f$).
\end{definition}
In particular, fiber and cofiber sequences are exactly dual: a cofiber sequence
 in $\C$ is a fiber sequence in $\C^{op}$. Therefore, we will restrict to fiber sequences below.

\begin{notation}\label{num:loop&suspension}
Let again $\C$ be a pointed $\infty$-category which admits finite limits (resp. colimits).
 Then one defines the loop space (resp. suspension) of an object $X$ of $\C$ by
 the following pullback (resp. pushout) square:
$$
\xymatrix@=10pt{
\Omega X\ar[r]\ar[d] & {}*\ar[d]
 & X\ar[r]\ar[d] & {}*\ar[d] \\
{}*\ar[r] & X & {}*\ar[r] & \Sigma X.
}
$$
This defines an $\infty$-functor $\Omega:\C \rightarrow \C$ (resp. $\Sigma:\C \rightarrow \C$).
 By symmetry of the diagram, the functor that exchanges the lower left and upper right corner
 induces a natural auto-equivalence $\Omega \rightarrow \Omega$ ($\Sigma \rightarrow \Sigma$),
 and we will denote by $-\Omega$ (resp. $-\Sigma$) the $\infty$-functor obtained by composing the original one
 with the latter auto-equivalence.
\end{notation}

\begin{remark}
Recall finally that, when $\C$ admits finite limits and colimits,
 $\Omega$ is left adjoint to $\Sigma$.
\end{remark}

\begin{notation}
Let us fix a pointed $\infty$-category $\C$ with finite limits.
 Then fiber sequences satisfy properties that precisely correspond to the axioms of a triangulated category,
 except one is not allowed to suspend. This has been observed in \cite[I, \S 3]{Quillen} for model categories,
 but not formalized to our knowledge. So we state the exact properties as they will be critical for
 our results.
\end{notation}
\begin{proposition}\label{prop:unstable_triangulated}
Let $\C$ be a pointed $\infty$-category admitting all finite limits.
 The following properties of fiber sequences hold:
\begin{enumerate}[label=(Fib\arabic*)]
\item \begin{enumerate}
\item Given any morphism $f:Y \rightarrow X$, there exists a
 fiber sequence $F \xrightarrow i X \xrightarrow f Y$ in $\C$, unique up to a contractible space of choices. 
\item If a triangle $\Delta:F \rightarrow Y \rightarrow X$ is isomorphic to a fiber sequence (in the $\infty$-category $\Tri(\C)$),
 then $\Delta$ is a fiber sequence.
\item The homotopy fiber of the identity $\mathrm{Id}_X:X \rightarrow X$ is the zero object $*$ (with canonical map $* \rightarrow X$).
 \end{enumerate}
\end{enumerate}

\begin{enumerate}[label=(Fib\arabic*)]\setcounter{enumi}{1}
\item A fiber sequence $F \xrightarrow i Y \xrightarrow f X$ can be extended on the \emph{left}
$$
\hdots \xrightarrow{\Omega^2 f} \Omega^2 X \xrightarrow{-\Omega \partial_f} \Omega F \xrightarrow{-\Omega g} \Omega Y \xrightarrow{-\Omega f} \Omega X \xrightarrow{\ \partial_f\ }
F \xrightarrow{\ i\ } Y \xrightarrow{\ f\ } X
$$
in such a way that any couple of consecutive maps is a fiber sequence. This extension is unique up to a contractible space of choices.
 The resulting sequence is called a \emph{long fiber sequence}.
\item Given a commutative diagram of solid arrows in $\C$
$$\xymatrix@=12pt{
\Omega X\ar^d[r]\ar_{\Omega a}[d] & F\ar^i[r]\ar@{-->}[d] & Y\ar^f[r]\ar^b[d] & X\ar^a[d] \\
\Omega X'\ar^{d'}[r] & F'\ar^{i'}[r] & Y'\ar^{f'}[r] & X'
}
$$
such that the horizontal lines are the end of a long exact fiber sequence, there exists a dotted arrow making the resulting squares
 commutative.
\item Consider a commutative diagram $\mathcal D$ in $\C$: 
 $\xymatrix@C=14pt@R=-4pt{X\ar^h[rr]\ar_g[rd] & & Z \\ & Y\ar_f[ru] &}$. Then there exists an essentially unique commutative diagram in $\C$
 made of pullback squares and of the following form:
$$
\xymatrix@R=15pt@C=26pt{
Z\ar@{}|{(1)}[rd] & Y\ar_-f[l]\ar@{}|{(2)}[rd] & X\ar_-g[l]\ar@{}|{(3)}[rd] & {}*\ar[l] & & \\
{}*\ar[u] & F\ar[l]\ar|i[u]\ar@{}|{(4)}[rd] & H\ar |{g'}[l]\ar|k[u]\ar@{}|{(5)}[rd] & \Omega Z\ar|{\partial_h}[l]\ar[u]\ar@{}|{(6)}[rd] & {}*\ar[l] & \\
& {}*\ar[u] & G\ar[l]\ar|{j'}[u]\ar@{}|{(7)}[rd] & \Omega Y\ar|{\partial_g}[l]\ar|{-\Omega f}[u]\ar@{}|{(8)}[rd] & \Omega F\ar|{-\Omega i}[l]\ar[u]\ar@{}|{(9)}[rd] & {}*\ar[l]  \\
&& {}*\ar[u] & \Omega X\ar[l]\ar|{-\Omega g}[u] & \Omega H\ar|{-\Omega k}[l]\ar|{-\Omega g'}[u] & \Omega G\ar|{-\Omega j'}[l]\ar[u] 
}
$$
This diagram can be organized in an (unstable) octahedral diagram:
$$
\xymatrix@R=15pt@C=26pt{
Y\ar@{-->}_{\partial_g}[dd]\ar^f[rr] & & Z\ar@{-->}^{\partial_h}[dd]
 & Y\ar@{-->}_{\partial_g}[dd]\ar^f[rr] & & Z\ar@{-->}^{\partial_h}[dd]\ar@{-->}[ld] \\
\ar@{}|/-2pt/{(2+4)^*}[r] & X\ar|g[lu]\ar|{h}[ru]\ar@{}|{\mathcal D}[u] & \ar@{}|/-6pt/{(1+2)^*}[l]
 & & F\ar@{-->}[ld]\ar|i[lu]\ar@{}|/2pt/{(1)^*}[u]\ar@{}|/2pt/{(!)}[l] & &  \\
G\ar_{j'}[rr]\ar[ru] && H\ar|k[lu]
 & G\ar_{j'}[rr] & \ar@{}|/-2pt/{(4)^*}[u] & H\ar|{g'}[lu]
}
$$
\end{enumerate}
in such a way that dotted arrows means a map such that the source is composed with the functor $\Omega$,
 the diagrams indicated by a symbol $(-)^*$ represent cofiber sequences corresponding to the pullback diagram obtained
 by eventually pasting the pullback diagrams coming from the first diagram in $\C$. The other parts of the diagram are commutative diagram,
 except for diagram $(!)$ which is anti-commutative.\footnote{The sign comes from the sign in the bottom map of diagram (6).}
 Finally, one has the two homotopies:
$$
(2): i g' \simeq g k \qquad (5): j' \partial_g \simeq \partial_h (-\Omega f).
$$
\end{proposition}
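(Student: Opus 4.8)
The plan is to treat (Fib1)--(Fib4) in turn; everything except (Fib4) follows directly from the formal behaviour of pullbacks in an $\infty$-category with finite limits. \textbf{(Fib1):} the fiber of $f\colon Y\to X$ is by definition the pullback of $*\to X\xleftarrow{\,f\,}Y$, and since a limit that exists is unique up to a contractible space of choices, the space of triangles $F\to Y\to X$ extending $f$ that are pullback squares is contractible, which gives both existence and the uniqueness clause in (a); clause (b) is the invariance of ``being a limit cone'' under equivalences of diagrams; and for (c), the fiber of $\mathrm{Id}_X$ is $*\times_X X$, which is $*$ because pullback along an equivalence is an equivalence.

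\textbf{(Fib2) and (Fib3):} For (Fib2), the key step is that the fiber of $i\colon F\to Y$ is $\Omega X$: pasting the pullback square presenting $F=Y\times_X *$ underneath the one presenting $\mathrm{fib}(i)=*\times_Y F$, the pasting lemma identifies $\mathrm{fib}(i)$ with $*\times_X *=\Omega X$, the induced map $\Omega X\to F$ being $\partial_f$ by definition. Iterating (next on $\Omega X\to F\to Y$, and so on) produces the whole left extension, every consecutive pair being a pullback square hence a fiber sequence; and since $\Omega$, being pointwise a finite limit, preserves pullback squares, the sequence settles into $\cdots\to\Omega Y\to\Omega X\to F\to Y\to X$. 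The signs $-\Omega(-)$ enter exactly when one transports these iterated-pullback identifications onto the expected ones, i.e.\ along the symmetry of the loop-space square of \Cref{num:loop&suspension} that the notation $-\Omega$ records; tracking them is bookkeeping. Uniqueness up to contractible choice is inherited from uniqueness of each fiber. For (Fib3), the dotted arrow $F\to F'$ comes from the universal property of the pullback $F'=Y'\times_{X'}*$: the maps $b\colon Y\to Y'$ and $F\to *$, together with the nullhomotopy of $f'bi$ obtained by composing the given $f'b\simeq af$ with the nullhomotopy $fi\simeq *$ belonging to the fiber sequence, give exactly the datum factoring $F$ through $F'$; the adjoining squares commute by construction, and compatibility with $\Omega a$ on the $\Omega X$--$\Omega X'$ corner is the naturality of $\partial_f$ supplied by the functoriality of the construction just used.

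\textbf{(Fib4):} This is the substantive point. Starting from the commutative triangle $\mathcal D$, I would build the large diagram by repeatedly forming fibers: $F=\mathrm{fib}(f)$ gives square $(1)$; the pullback of $X\xrightarrow{g}Y\xleftarrow{i}F$ gives $H$ and square $(2)$, and since $h\simeq fg$ the horizontal pasting of $(1)$ and $(2)$ identifies $H$ with $\mathrm{fib}(h)$; a further fiber gives square $(3)$ with $\mathrm{fib}(k)\simeq\Omega Z$ by (Fib2); symmetrically one gets $G=\mathrm{fib}(g)$ and squares $(4),(7)$, and the remaining vertices are produced by the same device, each new square being a pullback by the pasting lemma. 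To give the ``essentially unique commutative diagram of pullback squares'' clause content in the $\infty$-categorical setting --- rather than a chain of separately chosen fibers --- I would realize the whole picture as a right Kan extension of $\mathcal D$ along the inclusion of the finite poset indexing $\mathcal D$ into the poset indexing the full diagram, with the new vertices forced to be zero objects: such an extension exists, is unique up to a contractible space of choices, and (with the poset arranged as in the stable-categorical proof of the octahedron) makes each of $(1)$--$(9)$ a pullback square, equivalently the cofiber sequences marked $(-)^*$ when read from the other side. The octahedral reformatting is then purely diagrammatic, extracting sub-rectangles and their pastes; the homotopies are read off directly, $(2)$ being the commutativity datum of square $(2)$ and $(5)$ that of square $(5)$, while the sign $-\Omega f$ in $(5)$ and the anticommutativity of triangle $(!)$ are forced by the sign already present in the bottom edge of square $(6)$ coming from (Fib2).

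The step I expect to be the main obstacle is (Fib4): no single identification is hard --- each is a routine application of the pasting lemma --- but assembling them into one coherent diagram, so that the uniqueness assertion has meaning, and carrying the loop-space signs through consistently, is delicate, and packaging the construction as the Kan extension above is exactly what is meant to handle this.
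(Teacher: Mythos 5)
Your proposal is correct and follows essentially the same route as the paper's own (much terser) proof: fibers as pullbacks for (Fib1), the pasting lemma plus Lurie's sign analysis for (Fib2), the universal property of the pullback for (Fib3), and an iterated construction of pullback squares for (Fib4). Your right-Kan-extension packaging of (Fib4) is just a clean formalization of what the paper calls the ``obvious iterative construction,'' so it does not constitute a genuinely different argument.
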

\begin{proof}
The property (Fib1) follows from the properties of pushouts in $\C$.
 The subtlety of (Fib2) comes from the needed signs. They follow from the definition of fiber sequences,
 as explained in \cite[Lem. 1.1.2.9 and proof of (TR2) in section 1.1.2]{LurieHA}.
 (Fib3) follows from the functoriality of pushouts.
 The existence and uniqueness of the first diagram in (Fib4) is a consequence of the existence and uniqueness of pullback squares,
  through an obvious iterative construction, which starts by adding the pullback squares (1), (2) and so on.
 Signs appear for the same reason that for axiom (Fib2).
 The translation into an octahedron diagram as in the final assertion follows from the existence of the first diagram.
\end{proof}

\begin{remark}\label{rem:fibers&cofibers&triangulated}
\begin{myenum}
\item All the above properties hold dually for cofiber sequences. One changes the direction of all the maps
 and replace $\Omega$ by $\Sigma$ (signs remain).
\item These axioms are more or less classical in homotopy theory, with the exception of the octahedron axioms.\footnote{We
 refer the reader to \cite[Rem. p. 3.10]{Quillen} for an amusing commentary about this axiom.}
 It is remarkable that the axioms of $\infty$-categories turn all of Verdier's axioms into properties.
\item The octahedron axiom is usually not fully stated. In particular,
 the relations (2) and (5) are often forgotten. However, we will use all these relations
 in our analysis of the unstable spectral sequence associated with a tower
 (see \Cref{thm:degeneracy}). We refer the reader to \cite[1.1.5]{BBD} for
 the complete statement in a triangulated category.\footnote{Beware that we have rotated the octahedron diagram.}
\end{myenum}
\end{remark}

\begin{notation}
Further, there is a property which is specific to the unstable case and concerns
 the compatibility of fiber and cofiber sequences. 
 It was underlined by Quillen in \cite[I.3, prop. 6]{Quillen}.

Let $\C$ be a pointed $\infty$-category with finite limits and colimits.
 In this case, the functor $\Sigma$ is left adjoint to $\Omega$.\footnote{This
 follows essentially as $\Sigma$ (resp. $\Omega$) is computed as a finite colimit
 (resp. finite limit), therefore left (resp. right) adjoint of a constant diagram
 functor.}
 Quillen's result can be stated in $\C$ as follows.
\end{notation}

\begin{proposition}
Let $\C$ be a pointed $\infty$-category with finite limits and colimits.
 Consider a diagram: $A \xrightarrow g B \xrightarrow \tau Y \xrightarrow f X$ in $\C$.
 Then it can be essentially uniquely completed into the following commutative diagram in $\C$
$$
\xymatrix@=10pt{
A\ar^g[r]\ar_\alpha[d] & B\ar[r]\ar_\beta[d]\ar|\tau[rd]
 & Q\ar[r]\ar^\gamma[d] & \Sigma A\ar^\delta[d] \\
\Omega X\ar[r] & F\ar[r] & Y\ar^f[r] & X 
}
$$
in such a way that the upper (resp. lower) line
 is the beginning of a long cofiber (resp. fiber) sequence.
 Moreover, there exists an equivalence: $\delta=-\hat \alpha$ where
 $\hat \alpha$ is the map adjoint to $\alpha$ with respect to the adjunction
 $(\Omega,\Sigma)$, and the minus sign correspond to the canonical automorphism
 of $\Omega$ (see \Cref{num:loop&suspension}). 
\end{proposition}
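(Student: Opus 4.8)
The plan is to build the diagram by iterating the universal properties of (co)fiber sequences, and then to extract the formula $\delta\simeq-\hat\alpha$ by a naturality reduction to a ``universal'' situation in which $\delta$ becomes directly comparable to the counit $\epsilon_X\colon\Sigma\Omega X\to X$.

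\textbf{Construction of the diagram.} By \Cref{prop:unstable_triangulated}, properties (Fib1)--(Fib2), form the long fiber sequence of $f$, namely $\Omega X\xrightarrow{\partial_f}F\xrightarrow{i}Y\xrightarrow{f}X$ (so $F$ is the fiber of $f$ and $\Omega X$ the fiber of $i$), and dually, by \Cref{rem:fibers&cofibers&triangulated}, the long cofiber sequence of $g$, namely $A\xrightarrow{g}B\xrightarrow{p}Q\xrightarrow{\partial^g}\Sigma A$ with $Q=\cofiber(g)$. I read the given datum $A\xrightarrow{g}B\xrightarrow{\tau}Y\xrightarrow{f}X$ as a complex, i.e. as equipped with nullhomotopies $\tau g\simeq *$ and $f\tau\simeq *$. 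Then $(\tau,\, f\tau\simeq *)$ is, by the universal property of the fiber $F$, a map $\beta\colon B\to F$ with $i\beta\simeq\tau$; splicing $i\beta\simeq\tau$ with $\tau g\simeq *$ gives a nullhomotopy of $i\beta g$, so $(\beta g,\, i\beta g\simeq *)$ is a map $\alpha\colon A\to\Omega X$ with $\partial_f\alpha\simeq\beta g$. Dually, $(\tau,\, \tau g\simeq *)$ is a map $\gamma\colon Q\to Y$ with $\gamma p\simeq\tau$, and $(f\gamma,\, f\gamma p\simeq f\tau\simeq *)$ is a map $\delta\colon\cofiber(p)\simeq\Sigma A\to X$ with $\delta\,\partial^g\simeq f\gamma$. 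Every square commutes by construction, and since each of these maps is produced by a universal property which is contractible once the previous data is chosen, the completed diagram is essentially unique.

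\textbf{Reduction to $\alpha=\id$.} The homotopy $\partial_f\alpha\simeq\beta g$ exhibits $(\alpha,\beta)$ as a morphism of arrows from $g\colon A\to B$ to $\partial_f\colon\Omega X\to F$. Applying the cofiber functor and extending to long cofiber sequences yields a morphism of cofiber sequences with an induced map $\theta\colon Q\to\cofiber(\partial_f)$ and rightmost vertical map $\Sigma\alpha$ (no extra sign: signs occur only in the rotation of a (co)fiber sequence, not in a morphism between them). Now run the construction of the first step on the \emph{universal} input $\Omega X\xrightarrow{\partial_f}F\xrightarrow{i}Y\xrightarrow{f}X$, i.e. with $g:=\partial_f$, $\alpha:=\id_{\Omega X}$, $\tau:=i$, $\beta:=\id_F$; call the resulting maps $\gamma_0$ and $\delta_0$. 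The morphism of cofiber sequences just built shows that $\gamma_0\circ\theta$ and $\delta_0\circ\Sigma\alpha$ form a valid completion of the original data, so by essential uniqueness $\gamma\simeq\gamma_0\circ\theta$ and $\delta\simeq\delta_0\circ\Sigma\alpha$. Since $\hat\alpha=\epsilon_X\circ\Sigma\alpha$ and, the adjoint of an identity being the counit, $\widehat{\id_{\Omega X}}=\epsilon_X$, it remains only to prove $\delta_0\simeq-\epsilon_X$.

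\textbf{The universal case, and the main obstacle.} Here the top row is the cofiber sequence of the connecting map $\partial_f$ and the bottom row is the tail of the fiber sequence of $f$; both are triangles beginning with $\partial_f\colon\Omega X\to F$, so by the uniqueness in (Fib1)--(Fib2) the map $\gamma_0\colon\cofiber(\partial_f)\to Y$ is the comparison of these two triangles (when $\C$ is stable $\gamma_0$ is an equivalence and the assertion becomes the classical fact that one rotation of a triangle inserts a sign into the fourth map). Tracing this through the sign conventions of (Fib2), equivalently \cite[\S1.1.2]{LurieHA}, together with the description of $\partial_f$ recorded in \Cref{sec:fiber_seq}, one finds that the cofiber boundary $\partial^{\partial_f}\colon\cofiber(\partial_f)\to\Sigma\Omega X$ corresponds, under $\gamma_0$ and $\epsilon_X$, to $-f\colon Y\to X$ — the minus sign being precisely the $\mathbb{Z}/2$-symmetry of the cone diagram denoted $-\Sigma$ in \Cref{num:loop&suspension}. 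Thus $f\gamma_0\simeq-\epsilon_X\circ\partial^{\partial_f}$, which together with the defining relation $\delta_0\circ\partial^{\partial_f}\simeq f\gamma_0$ and the essential uniqueness of the completion forces $\delta_0\simeq-\epsilon_X$, hence $\delta\simeq-\epsilon_X\circ\Sigma\alpha=-\hat\alpha$. The delicate point is exactly this last one: certifying that the sign arising when one passes from the fiber sequence of $f$ to the cofiber sequence of its connecting map is precisely the discrepancy $-\epsilon_X$ between $\delta_0$ and the counit. This is where the sign conventions of (Fib2) — all the ``$-\Omega$''/``$-\Sigma$'' decorations in the long (co)fiber sequences — do genuine work; everything else is a routine assembly of universal properties.
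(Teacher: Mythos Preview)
Your argument is correct and lands in the same place as the paper's, but you take a somewhat longer route. The paper's proof is two sentences: the diagram exists by iterated universal properties of (co)fibers, and the identity $\delta\simeq-\hat\alpha$ follows by ``functoriality of pullbacks, paying attention to the orientations of the resulting squares'' together with \cite[Lem.~1.1.2.9]{LurieHA}. In other words, the paper computes $\delta$ and $\hat\alpha$ directly as maps out of the same pushout square and observes that they differ by the flip automorphism of that square. You instead insert a naturality reduction to the ``universal'' input $\Omega X\xrightarrow{\partial_f}F\xrightarrow{i}Y\xrightarrow{f}X$, so that the sign check becomes the single assertion $\delta_0\simeq-\epsilon_X$; you then verify that assertion by the same appeal to the (Fib2)/\cite[\S1.1.2]{LurieHA} sign conventions. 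This extra step is sound and arguably clarifies where the sign lives, at the cost of an additional page of bookkeeping (the morphism-of-cofiber-sequences argument and the check that $(\gamma_0\theta,\delta_0\Sigma\alpha)$ is a valid completion).

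Two remarks. First, you read the input $A\xrightarrow{g}B\xrightarrow{\tau}Y\xrightarrow{f}X$ as a complex, i.e.\ as carrying chosen null-homotopies $\tau g\simeq*$ and $f\tau\simeq*$. The paper does not say this explicitly, but it is indeed required for the completed diagram to exist (and is the situation in the corollary that follows), so your reading is the intended one. Second, when you conclude $\delta\simeq\delta_0\circ\Sigma\alpha$ by ``essential uniqueness'', you are using that the completion is determined not just by the commuting squares but by the specific homotopies filling them; your construction does track these homotopies correctly via the morphism of cofiber sequences, but it is worth being aware that this is where a careless argument could slip.
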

\begin{proof}
The existence of the left (resp. right) part of the commutative diagram follows
 as $F$ and $\Omega X$ (resp. $Q$ and $\Sigma A$) are computed as fibers 
 (resp. cofibers).
 The last assertion follows from functoriality of pullbacks, paying attention to the
 orientations of the resulting squares and use \cite[Lem. 1.1.2.9]{LurieHA} to get the correct sign.
\end{proof}

We will use this proposition in the following corollary.
\begin{corollary}\label{cor:homotopy}
Let $g:A \rightarrow B$ and $f:Y \rightarrow X$ be morphisms,
 fitting in the following commutative square in $\C$:
$$
\xymatrix@=10pt{
\Sigma A\ar^{-\Sigma g}[r]\ar_\phi[d] & \Sigma B\ar^\psi[d] \\
X\ar^f[r] & Y
}
$$
Then there exists an essentially unique commutative diagram in $\C$:
$$
\xymatrix@R=12pt@C=22pt{
A\ar^g[r]\ar_{\hat \phi}[d] & B\ar[r]\ar^{\hat \psi}[d]
 & Q\ar[r]\ar|\eta[d] & \Sigma A\ar^-{-\Sigma g}[r]\ar_\phi[d] & \Sigma B\ar^\psi[d] \\
\Omega Y\ar^{-\Omega f}[r] & \Omega X\ar[r] & F\ar[r] & Y\ar^f[r] & X 
}
$$
in such a way that the upper (resp. lower) line is the beginning of a long cofiber
 (resp. fiber) sequence.
\end{corollary}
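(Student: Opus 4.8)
The plan is to derive everything from the preceding proposition by one application, followed by a one-step prolongation of the two long sequences. Fix notation: put $F:=\mathrm{fib}(f)$ and record, via (Fib2), the segment of the long fiber sequence of $f$
\[
\Omega Y\xrightarrow{-\Omega f}\Omega X\xrightarrow{\ \partial_f\ }F\xrightarrow{\ i\ }Y\xrightarrow{\ f\ }X ,
\]
and dually the segment of the long cofiber sequence of $g$
\[
A\xrightarrow{\ g\ }B\xrightarrow{\ j\ }Q\xrightarrow{\partial_g}\Sigma A\xrightarrow{-\Sigma g}\Sigma B ;
\]
write $\hat\phi\colon A\to\Omega Y$ and $\hat\psi\colon B\to\Omega X$ for the adjoints of $\phi$ and $\psi$ under the loop--suspension adjunction.

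First I would apply the preceding proposition to the composable chain $A\xrightarrow{g}B\xrightarrow{\,\partial_f\hat\psi\,}F\xrightarrow{\,i\,}Y$, i.e.\ with its ``$f$'' taken to be the fibre inclusion $i\colon F\to Y$ (legitimate since $i\circ\partial_f\simeq *$, hence $i\circ\partial_f\hat\psi\simeq *$). As $\Omega X\xrightarrow{\partial_f}F\xrightarrow{i}Y$ is a fiber sequence, the ``$F$'' of the proposition is $\mathrm{fib}(i)\simeq\Omega X$ with fibre inclusion $\partial_f$, and the uniqueness of left prolongation in (Fib1)--(Fib2) identifies the term just to the left of $\Omega X$ in the resulting long fiber sequence with $-\Omega f$. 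So the proposition returns an essentially unique commutative diagram whose bottom row is $\Omega Y\xrightarrow{-\Omega f}\Omega X\xrightarrow{\partial_f}F\xrightarrow{i}Y$ and whose top row is $A\xrightarrow{g}B\xrightarrow{j}Q\xrightarrow{\partial_g}\Sigma A$ --- these are the left-hand four columns of the sought diagram; denote its middle vertical by $\eta\colon Q\to F$. Its second vertical $B\to\Omega X$ composes with $\partial_f$ to the chosen diagonal $\partial_f\hat\psi$, so one may arrange the completion so that it equals $\hat\psi$; and then its first vertical $\alpha\colon A\to\Omega Y$ must satisfy the left-hand square relation $(-\Omega f)\,\alpha\simeq\hat\psi\,g$, which is precisely what one gets by passing to adjoints in the square assumed in the statement (using that the canonical auto-equivalences of $\Sigma$ and $\Omega$ correspond under the adjunction, so that $-\Omega f$ and $-\Sigma g$ are compatible). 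Hence $\alpha$ may be identified with $\hat\phi$.

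Next I would prolong this four-column diagram one step to the right: append $\Sigma A\xrightarrow{-\Sigma g}\Sigma B$ to the top (the next term of the long cofiber sequence of $g$, by the dual of (Fib2)) and $Y\xrightarrow{f}X$ to the bottom (the next term of the long fiber sequence of $f$), with new vertical $\psi\colon\Sigma B\to X$. The square thus created is exactly the hypothesised one, so it commutes, and one obtains the full five-column diagram. Essential uniqueness follows by combining the essential uniqueness in the preceding proposition with the essential uniqueness of the left and right prolongations of (co)fiber sequences in (Fib1)--(Fib2) and their duals.

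The one step I expect to require real care is the signs. One must check that the relation $\delta=-\hat\alpha$ furnished by the preceding proposition between its fourth and first verticals is consistent with the fourth vertical of the asserted diagram being $\phi$: this works because the ``$-\Sigma g$'' forced on the top row is the correct connecting map of the cofiber sequence and, together with the double-adjunction identity $\widehat{\hat\phi}\simeq\phi$, absorbs the automorphism of $\Omega$ hidden in $-\hat\alpha$. These signs are threaded exactly as in the proof of the preceding proposition (using \cite[Lem.~1.1.2.9]{LurieHA}); everything else is formal, resting only on the universal properties of fibers and cofibers.
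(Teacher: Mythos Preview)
Your approach is correct and is precisely what the paper intends: the corollary is stated without proof as an immediate consequence of the preceding proposition, and your method—applying that proposition to the shifted chain $A\xrightarrow{g}B\xrightarrow{\partial_f\hat\psi}F\xrightarrow{i}Y$ to obtain the four left-hand columns, then appending the given square as the fifth—is the natural derivation. Your caveat about the signs is well placed; the identification $\delta=-\hat\alpha$ from the proposition does force some care in matching the first and fourth verticals with $\hat\phi$ and $\phi$, and the paper is silent on this bookkeeping.
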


\begin{notation}
Let again $\C$ be an $\infty$-category with finite limits.
 Recall from \cite[6.1.2.7]{LurieHTT} that a \emph{groupoid object} of $\C$
 is a simplicial object $G_\bullet:\Delta^{op} \rightarrow \C$ satisfying the
 \emph{Segal condition} (see \emph{loc. cit.} 6.1.2.6).
 Examples are provided by the \v Cech simplicial object $\check S_\bullet(Y/X)$ associated with any morphism
 $p:Y \rightarrow X$ in $\C$.

One says (\emph{loc. cit.} 7.2.2.1) that $G_\bullet$ is a \emph{group object} if $G_0 \simeq *$.
 Note that $G_\bullet$ can also be seen as an $A_\infty$-algebra in $\C$ according to \cite[5.1.3.3]{LurieHA}.
 One can see $G_\bullet$ as an object $G_1$ of $\C$ equipped with a multiplication map
 $G_1 \times G_1 \simeq G_2 \rightarrow G_1$,\footnote{The first equivalence exists because of the Segal condition.}
 the simplicial structure encoding the associativity and unity properties.

Usually, one just says that $G=G_1$ is a group object in $\C$,
 referring to the underlying simplicial object $G_\bullet$ as the structure of $G$.
\end{notation}

\begin{example}\label{ex:loop_group_object}
Consider a pointed object $x:* \rightarrow X$ in $\C$.
 Then the loop space $\Omega_x X=* \times_X *$ (see \Cref{num:loop&suspension})
 admits a canonical group object structure $\Omega^\bullet_x(X)$ in $\C$,
 given by the \v Cech construction $\check S_\bullet(x:* \rightarrow X)$.
 In particular, the $n$-th component of this group object structure
 is the iterated loop space $\Omega^n_x(X)$.
 Moreover, the automorphism indicated by the minus sign in \Cref{num:loop&suspension}
 does in fact correspond to the inverse morphism with respect to that group
 structure.
\end{example}

\begin{notation}
Consider again an $\infty$-category with finite limits.
 Let $G$ be a group object in $\C$.
Following \cite[Def. 3.1]{NSS}, an action of $G$ on an object $X$ in $\C$
 is a groupoid object $(X//G)_\bullet$ such that $(X//G)_0=X$
 and with a morphism of groupoid objects $(X//G)_\bullet \rightarrow G_\bullet$.
 By looking at the first two stages of the simplicial object $(X//G)_\bullet$
 this determines a diagram
$$
\xymatrix@=40pt{
d_1^0,d_1^1:G \times X\ar@<2pt>[r]\ar@<-2pt>[r] & X
}
$$
such that $d_1^1$ corresponds to the projection on the second factor.
 The other map $d_1^0:G \times X \rightarrow X$ corresponds to the bare action
 of $G$ on $X$, while the rest of the data can be seen as coherences.

Let us now assume $\C$ is pointed, and consider a fiber sequence:
\begin{equation}\label{eq:mon_fiber_seq}
F \xrightarrow i Y \xrightarrow f X
\end{equation}
Then one deduces an action of the loop space $\Omega X$ with its canonical group structure
 on the fiber $F$ as the map of \v Cech simplicial objects
$$
\mu_\bullet:(F//\Omega X)_\bullet:=\check S_\bullet(i:F \rightarrow Y)
 \rightarrow \check S_\bullet(* \rightarrow X)=\Omega X_\bullet
$$
associated with the underlying pullback square.
 As explained previously, this determines a canonical map
$$
\mu:\Omega X \times F \rightarrow F.
$$
\end{notation}
\begin{definition}\label{df:monodromy}
Consider the above notation.
 Then the action $\mu_\bullet$ of $\Omega X$ on the fiber $F$ of $p$ will be
 called the \emph{monodromy action} associated with the fiber sequence \eqref{eq:mon_fiber_seq}.
\end{definition}

\begin{remark}
The dual monodromy operation exists for cofiber sequences
 (as a coaction of a cogroup $\Sigma A$ on the cofiber).
\end{remark}

\subsection{Homotopy functors in higher homotopy theory}\label{sec:htp_functors}

\begin{notation}\label{num:1-topos}
We will work inside a topos $\E$,
 using the internal notion of (abelian) groups
 in $\E$ called $\E$-groups.

We also consider
 the category of pointed objects of $\E$, $\E_*=*/\E$. 
 An $\E$-group object
 will always be assumed to be pointed by its neutral element.
 Note that it makes sense to speak about the kernel of maps in $\E_*$,
 as well as image and cokernel in $\E_*$ or in the category of $\E$-groups.
 Given a morphism $f:X \rightarrow Y$ of pointed objects
 (resp.  $u:G \rightarrow H$ of $\E$-group objects), one has the following equivalent conditions:
\begin{enumerate}[label=(T\arabic*)]
\item $f$ is an epimorphism $\Leftrightarrow$ $\coker(f)=*$ $\Leftrightarrow$  $\im(f)\rightarrow H$ iso.
\item $u$ is a monomorphism $\Leftrightarrow$ $\Ker(u)=*$.
\end{enumerate} 

A group $G$, being a monoid for the cartesian structure,
 can act on the left on an object (resp. pointed objet) $X$ of $\E$ as in the usual set theory.
 One says that $X$ is a $G$-object (aka $G$-equivariant object).
 The notion of morphism of $G$-objects, aka $G$-equivariant morphism, is clear.
 The \emph{orbit object} $X/G$ of a $G$-object $X$ is the coequalizer of 
 $\xymatrix@=10pt{G \times X\ar@<2pt>^-\gamma[r]\ar@<-2pt>_-p[r] & X}$
 where $\gamma$ is the $G$-action map and $p$ the projection.
 The action of $G$ on $X$ is transitive if $X/G=*$.
 We will also use the following equivalent conditions on a $G$-equivariant pointed map
 $f:X \rightarrow Y$ in the sequel:
\begin{enumerate}[label=(T\arabic*),resume]
\item $f$ is a monomorphism $\Leftrightarrow$ $\Ker(f)=*$.
\end{enumerate}
\end{notation}
\begin{definition}\label{df:pi*-structure}
 A \emph{$\pi_*$-structure} in $\E$ is an $\N$-graded object $G_*=(G_n)_{n \in \N} \in (\E_*)^{\N}$
 such that $G_1$ is an $\E$-group,
 and for all $n>1$, $G_n$ is an abelian $\E$-group,
 with an action of $G_1$ by $\E$-group automorphisms.

A morphism of $\pi_*$-structures is a morphism $f:G_* \rightarrow H_*$ which is a homogeneous morphism
 of degree $0$ of $\N$-graded objects, such that each $f_n:G_n \rightarrow H_n$ respects the relevant algebraic structures.
 The corresponding category is denoted by $\E_{\pi_*}$.
\end{definition}

\begin{remark}\label{rem:pi*-(co)limits}
The category $\E_{\pi_*}$ admits finite products and filtered colimits.
 They both are computed term-wise,
 in the categories of pointed $\E$-objects, $\E$-groups, abelian $\E$-groups in the appropriate degrees.
\end{remark}

\begin{example}\label{ex:pi_*-struct}
\begin{myenum}
\item Let $\T$ be an $\infty$-topos (\cite[Chap. 6]{LurieHTT}), $\T_*$ the $\infty$-category of pointed objects of $\T$.
 Let $X$ be an object of $\T_*$.
 We let $\T^\disc$ be the sub-$1$-category of $\T$ made of discrete objects,
 \emph{i.e.} the underlying topos, and let $\pi_0:\T_* \rightarrow \T^\disc_*$ be the natural projection
 (in other words, $\T^\disc$ is the $0$-th truncation of the $\infty$-category $\T$, \cite[Not. 5.5.6.2]{LurieHTT}).

 For any integer $n \geq 0$, we put $\pi_n(X):=\pi_0(\Omega^nX)$.
 It now readily follows from the canonical group object structure on the loop space $\Omega X$
 (see \Cref{ex:loop_group_object}) that $\pi_*(X)$ is a $\pi_*$-structure of $\T^\disc$.
 Note that our definition coincides with \cite[6.5.1]{LurieHTT}. 
\item An important example for us comes from the $\A^1$-homotopy category $\HA(S)$.
 As it is the $\A^1$-localization of the Nisnevich $\infty$-topos $\iSh(\Sm_S)$ on the smooth site over $S$,
 one has a forgetful functor $\cO:\HA(S) \rightarrow \iSh(\Sm_S)$.
Given a pointed object $\cX$ of $\HA(S)$, one deduces a $\pi_*$-structure
 $\piA_*(\cX):=\pi_*(\cO(X))$ in the topos of Nisnevich sheaves $\Sh(\Sm_S)^\disc$ over $S$,
 using the construction of the preceding point.
\end{myenum}
\end{example}

The following definition is an abstraction of the properties of long exact sequences
 of homotopy groups.
\begin{definition}\label{df:long_htp_seq}
A \emph{long homotopy sequence} in $\E$ is a triangle
$
\xymatrix@=10pt{
F_*\ar^f[rr]  && G_*\ar^g[ld] \\
& H_*\ar@{=>}^\partial[lu]
}
$
such that:
\begin{enumerate}
\item $F_*$, $G_*$ and $H_*$ are $\pi_*$-structures in $\E$, $f$ and $g$ are morphisms of $\pi_*$-structures in $\E$.
\item $\partial:H_* \rightarrow F_*$ is a homogeneous morphism of degree $-1$ of $(\E_*)^\N$
 such that for all $n>1$, $\partial_n:H_n \rightarrow F_{n-1}$ is a morphism of groups.
\item $F_0$ is an $H_1$-object\footnote{beware it is not necessarily a pointed $H_1$-object:
 the orbit of the base point of $F_0$ can be non-trivial.}
 such that $\partial_1:H_1 \rightarrow F_0$
 (resp. $f_0:F_0 \rightarrow G_0$) is $H_1$-equivariant where $H_1$ acts by left multiplication on the source 
 (resp. trivially on the target).
\item The image of $\partial_2:H_2 \rightarrow F_1$ lands in the center of $F_1$.
\item The composite of any two consecutive maps is trivial.
\end{enumerate}
One says that this long homotopy sequence is \emph{exact} if for all $n \geq 0$, one has an equality of sub-objects:
$$
\im(f_n)=\Ker(g_n), \im(g_{n+1})=\Ker(\partial_{n+1}), \im(\partial_{n+1})=\Ker(f_n),
$$
and moreover, the map $\tilde f_0:F_0/H_1 \rightarrow G_0$ induced by $f_0$ according to point (3) is a monomorphism.
\end{definition}
Following Morel (see \cite[\textsection 2.2]{MorelLNM}), we have used the symbol $\Rightarrow$ to indicate the boundary map $\partial$,
 meaning this map is actually extended to an action of the left hand-side on the right hand-side.
 We will also use the following notation to indicate such a sequence:
$$
\hdots H_2 \xRightarrow{\partial_2} F_1 \xrightarrow{f_1} G_1 \xrightarrow{g_1} H_1 \xRightarrow{\partial_1} F_0 \xrightarrow{f_0} G_0 \xrightarrow{g_0} H_0
$$

\begin{remark}
\begin{myenum}
\item Axiom (2) can also be formulated by saying that for all $n>1$, $F_{n-1}$ is an $H_{n}$-object such that
 $\partial_n:H_n \rightarrow F_{n-1}$ is $H_n$-equivariant where $H_n$ acts on the source by left multiplication.
\item An important consequence of the axiom (3) is that the map $\partial_1$ is equal to the composite
 $H_1 \times * \xrightarrow{\id \times x} H_1 \times F_0 \xrightarrow \gamma F_0$
 where $x$ is the base point of $F_0$ and $\gamma$ is the action of $H_1$ on $F_0$.
\end{myenum}
\end{remark}

\begin{example}\label{ex:long_htp_seq}
Consider the setting of \Cref{ex:pi_*-struct}(1). Let $f:Y \rightarrow X$ be a morphism in $\T_*$. Let $i:F=\Ker(f) \rightarrow Y$ be the homotopy fiber of $f$.
 On deduces from the long fiber sequence (Fib2) of \Cref{prop:unstable_triangulated} a homotopy exact sequence in $\T^\disc$:
$
\xymatrix@=10pt{
\pi_*(F)\ar^{i_*}[rr]  && \pi_*(Y)\ar^{f_*}[ld] \\
& \pi_*(X)\ar@{=>}^\partial[lu]
}
$
where the $\pi_*$-structures come from \Cref{ex:pi_*-struct}
 and the action of $\pi_1(X)$ on $\pi_0(F)$ is given by the
 monodromy action of \Cref{df:monodromy}.
\end{example}

The preceding definition allows us to extend the classical definition of a homological functor:
\begin{definition}\label{df:htp_functor}
Let $\C_*$ be as above and $\E$ be a $1$-topos.

A \emph{homotopy functor} is an $\infty$-functor $\Pi_*:\C_* \rightarrow \nrv \E_{\pi_*}$,
 which turns fiber sequences (\Cref{df:(co)fiber_seq}) into long homotopy exact sequences.
  Dually, a cohomotopy functor on $\C$ will be a homotopy functor starting from the opposite of $\C_*$:
 so that  it turns cofiber sequences into long homotopy exact sequences.

In addition, we will say that the homotopy functor $\Pi_*$ is \emph{additive} if it commutes with products.
 As a result of this definition, a cohomotopy functor is additive if it sends
 coproducts (of pointed $\E$-objects) to products in the category of $\pi_*$-structures
 (see \Cref{rem:pi*-(co)limits}).
\end{definition}

\begin{example}
\begin{myenum}
\item \Cref{ex:long_htp_seq} implies that $\pi_*:\T_* \rightarrow \T^\disc_{\pi_*}$
 is a homotopy functor.
\item Consider a pointed $\infty$-category $\C_*$.
 Then $\C_*$ admits a canonical (unique in a suitable sense) enrichment in pointed spaces.
 Then for objects $A$ and $X$ of $\C_*$, the $\pi_*$-structure in sets $\pi_*(\Map_{\C_*}(A,X))$
 defines a cohomotopy functor in $A$ (for $X$ fixed) and a homotopy functor in $X$ (for $A$ fixed).
\end{myenum}
\end{example}

The following definition is motivated by the unstable Bousfield-Kan spectral sequence that will be reviewed in the next section (see there for examples).
\begin{definition}\label{unstable-cx}
A \emph{homotopical complex} with coefficients in $\E$ is an $\N$-graded object $E_*$ of $\E_*$ with a homogeneous morphism
 $d:E_* \rightarrow E_*$ in $\E_*^\N$ of degree $-1$ such that
\begin{itemize}
\item $d \circ d=*$.
\item For all $n>1$, $E_n$ is an abelian $\E$-group, $E_1$ is an $\E$-group and $E_0$ is a pointed $\E$-object,
 and an $\E_1$-object (after forgetting the base point).
\end{itemize}
The homotopy groups of the complex $E_*$ are defined for $n>0$ as:
$$
\pi_n(E_*)=\Ker(d_n)/\im(d_{n+1}).
$$
One says that the complex is \emph{exact} if for all $n>0$, $\pi_n(E_*)=*$.\footnote{Equivalently, kernel = image at all possible places.}

An \emph{augmentation} of $E_*$ is an $E_1$-equivariant map $\epsilon:E_0 \rightarrow F$ of pointed objects in $\E$
 where $F$ has been given the trivial $E_1$-action.
 One also says that $(E_*,\epsilon)$ is an augmented complex. Then one defines the homotopy groups of $(E_*,\epsilon)$
 by the formula $\pi_n(E_*,\epsilon)=\pi_n(E_*)$ for $n>0$, and $\pi_0(E_*,\epsilon)=\Ker(\epsilon)/E_1$ (orbit space).
 We will say that the augmented homotopy complex $(E_*,\epsilon)$ is \emph{exact}
 if for all integers $n\geq 0$, $\pi_n(E_*,\epsilon)=*$. One further says that $(E_*,\epsilon)$ is \emph{strongly exact}
 if it is exact and the induced map  $\tilde \epsilon:E_0/E_1 \rightarrow F$ is a monomorphism.
\end{definition}
In other words, the exactness condition concerning the augmentation
 means that $E_1$ acts transitively on the kernel $\Ker(\epsilon)$.

\begin{notation}\label{num:cohomotopical_notation}
We will further consider \emph{bounded} homotopical complexes:
 such a homotopical complex $E_*$ as above is said to be \emph{$d$-truncated}
 if for all $n>d$, $E_n=*$.

To relate to the classical literature on the coniveau spectral sequence,
 we will use a ``cohomotopical'' indexing for truncated homotopical complexes:
 a $d$-truncated homotopical complex $C^*$ will be denoted as:
$$
* \rightarrow C^0 \xrightarrow{d^0} C^1 \xrightarrow{d^1} \cdots \Rightarrow C^d
$$
with the previous notation, one has: $C^n=E_{d-n}$.
 An augmentation map $\epsilon:E_0 \rightarrow F$ of $E_*$ will be called a \emph{coaugmentation}
 $\epsilon:C^d \rightarrow F$ of the associated cohomotopical complex $C^*$.
 Then the notions of exactness and strong exactness of $(C^*,\epsilon)$ are the same.

Such a complex can also be co-augmented (we say ``bi-augmented'') by a map $X \xrightarrow \tau C^0$
 which is a morphism of pointed sets, groups or abelian groups if $d=0, 1$ or $d\geq 2$,
 such that $d^0 \circ \tau =*$.
 The co-augmented complex is exact if $\Ker(\tau)=*$ and $\im(\tau)=\Ker(d^0)$.
 Strong exactness is defined as for augmented complexes.

Following Morel \cite[Def. 2.20, 6.1]{MorelLNM}, we further adopt the following
 notation:
\end{notation}
\begin{definition}\label{df:coaugmented_cohtp_cpx}
Consider a $d$-truncated cohomotopical complex $C^*$ as above.
 An \emph{augmentation} of $C^*$ will be a map
 $X \xrightarrow \tau C^0$
 which is respectively a morphism of pointed sets, groups or abelian groups if $d=0, 1$ or $d\geq 2$,
 and such that $d^0 \circ \tau =*$.
 One says that $(C^*,\tau)$ is augmented.
 One says that $(C^*,\tau)$ is exact if $\Ker(\tau)=*$ and $\im(\tau)=\Ker(d^0)$.

When $C^*$ admits an augmentation $\tau$ and a co-augmentation $\epsilon$,
 one says that $(C^*,\tau,\epsilon)$ is bi-augmented.
 Notions of exactness (resp. strong exactness) are to be taken
 for the augmented and the co-augmented cohomotopical complex.
\end{definition}

\begin{remark} The above definition contains three important and rather distinct cases:
\begin{myenum}
\item When $d=0$,
 an augmented complex is given by a diagram $* \rightarrow X \xrightarrow \tau C^0$.
 Exactness simply means that $\Ker(\tau)=*$.
\item The case $d=1$ is possibly the most intricate, and corresponds
 to Morel's Definition 2.20 in \cite{MorelLNM} (with slightly more flexibility).
 A bi-augmented $1$-truncated cohomotopical complex is given by a diagram:
$$
* \rightarrow X \xrightarrow \tau C^0 \xRightarrow d C^1 \xrightarrow \epsilon F
$$
where $X$ and $C^0$ are just $\E$-groups, while $C^1$ and $F$ a pointed $\E$-objects.
 The actual complex $C^*$ is only given by one map $d$,
 which is in fact an action of the $\E$-group $C^0$ on the pointed $\E$-object $C^1$.

Recall that exactness on the right means that $C^0$ acts transitively
 on $\Ker(\epsilon)$ while strong exactness implies in addition that the induced (pointed) map
 $\tilde \epsilon:C^1/C^0 \rightarrow F$ is a monomorphism (of pointed objects of $\E$).
 Here, $C^1/C^0$ means the orbit "space" of $C^0$ acting on $C^1$.
\item Finally for $d>1$, we can see a bi-augmented $d$-truncated complex $(C^*,\tau,\epsilon)$
 as a co-augmented $(d+1)$-truncated complex, by interpreting the augmentation $\tau$
 as a differential starting from degree $-1$.
\end{myenum}
\end{remark}

We end this section with an important technical point that will be used
 for defining and studying the unstable coniveau spectral sequence (see \Cref{num:coniveau_ssp}).
\begin{proposition}\label{prop:colimits-htp-seq}
Let $\E$ be a topos.
 The category of long homotopy sequences (resp. homotopy complexes, augmented homotopy complexes, bi-augmented complexes) in $\E$
 admits filtered colimits. These colimits are term-wise computed by filtered colimits of abelian $\E$-groups in degree bigger than $2$,
 filtered colimits of $\E$-groups in degree $1$, and by filtered colimit of pointed $\E$-objects in degree $0$.
 Finally, filtered colimits preserve exactness in all cases.
\end{proposition}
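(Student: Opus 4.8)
The plan is to reduce everything to the three elementary facts about the topos $\E$: that the categories of pointed $\E$-objects, of $\E$-groups, and of abelian $\E$-groups each admit filtered colimits computed "naively" on underlying objects of $\E$, and — crucially — that in a $1$-topos filtered colimits commute with finite limits, hence are exact (they preserve monomorphisms, epimorphisms, images, kernels, cokernels, and coequalizers). Starting from a filtered system $\{(G_*^\alpha, H_*^\alpha, F_*^\alpha, f^\alpha, g^\alpha, \partial^\alpha)\}$ of long homotopy sequences, I would first define the candidate colimit term-wise: $(\colim_\alpha G_n^\alpha)_{n}$ and likewise for $H_*$ and $F_*$, taking the colimit in the appropriate category (pointed $\E$-objects for $n=0$, $\E$-groups for $n=1$, abelian $\E$-groups for $n\ge 2$), which exists by \Cref{rem:pi*-(co)limits}. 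The maps $f,g,\partial$ are induced on colimits. One then checks this assembles into a $\pi_*$-structure and that $f,g$ are morphisms of $\pi_*$-structures: the $G_1^\alpha$-action on $G_n^\alpha$ passes to an action of $\colim G_1^\alpha$ on $\colim G_n^\alpha$ because $(\colim G_1^\alpha)\times(\colim G_n^\alpha) \simeq \colim(G_1^\alpha \times G_n^\alpha)$ — here one uses that products are finite limits and the filtered colimit is over a filtered (hence cofiltered-on-pairs, i.e. directed) index category, so the canonical map is an isomorphism.

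Next I would verify the structural axioms (1)–(5) of \Cref{df:long_htp_seq} for the colimit. Axioms (1) and (2) are the statement that the maps respect algebraic structures in the relevant degrees, which is automatic since colimits of morphisms of groups (resp. abelian groups) are morphisms of groups (resp. abelian groups). Axiom (3), that $F_0$ is an $H_1$-object with $\partial_1, f_0$ equivariant, follows as above from compatibility of the product $H_1\times F_0$ with filtered colimits. Axiom (4), that $\im(\partial_2)$ lands in the center of $F_1$: since $\im(\partial_2^\alpha)$ is central in $F_1^\alpha$ and filtered colimits commute with the formation of images and with the (finite-limit) centralizer condition, the colimit image is central — concretely, centrality is expressed by equality of two maps $F_1\times F_1 \to F_1$ after restriction, and such equalities are preserved by filtered colimits. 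Axiom (5), triviality of consecutive composites, is the equality of a composite with the trivial map, again preserved under filtered colimits since the trivial map is $\colim$ of the trivial maps. The cases of homotopy complexes, augmented complexes, and bi-augmented complexes are handled in exactly the same way, the extra datum being a differential $d$ with $d\circ d = *$, an augmentation $\epsilon$, and/or an augmentation $\tau$, each of which is either a morphism of groups/pointed objects or an equivariant map or an equation between maps — all of these notions and conditions are stable under filtered colimits.

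Finally, preservation of exactness. In each case exactness is a conjunction of equalities of subobjects of the form $\im(\text{map}) = \Ker(\text{map})$ (together, for long homotopy sequences, with the monomorphy of $\tilde f_0: F_0/H_1 \to G_0$, and for augmented complexes the transitivity of the $E_1$-action on $\Ker(\epsilon)$, i.e. $\Ker(\epsilon)/E_1 = *$, plus possibly monomorphy of $\tilde\epsilon$). Each ingredient is a finite-limit/finite-colimit construction: kernels are finite limits, images and orbit spaces/coequalizers are colimits, and the statement "$A = B$ as subobjects of $C$" or "$A \to B$ is a monomorphism" is detected by finite-limit data. Since filtered colimits in a $1$-topos commute with finite limits and are exact, $\colim$ of the exact data is the corresponding data for the colimit, and the required equalities hold in the colimit. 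For the homotopy groups one has directly $\pi_n(\colim E_*^\alpha) = \Ker(\colim d_n^\alpha)/\im(\colim d_{n+1}^\alpha) = \colim(\Ker d_n^\alpha / \im d_{n+1}^\alpha) = \colim \pi_n(E_*^\alpha)$, so vanishing is preserved; similarly for $\pi_0$ of an augmented complex using that the orbit space $\Ker(\epsilon)/E_1$ is a coequalizer and commutes with filtered colimits. The only point requiring genuine care — and which I would flag as the main (mild) obstacle — is the systematic bookkeeping that every operation invoked (action maps, centrality conditions, equivariance of $\partial_1$ and $f_0$, the orbit space $F_0/H_1$) is built from \emph{finite} limits and colimits, so that the exchange with a filtered colimit is legitimate; once this dictionary is laid out, each verification is routine.
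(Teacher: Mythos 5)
Your proof is correct, and its skeleton is the same as the paper's: define the colimit term-wise in the appropriate category (pointed $\E$-objects, $\E$-groups, abelian $\E$-groups), transport the actions using that filtered colimits commute with finite products, and deduce preservation of exactness from the exactness of filtered colimits. The execution differs in packaging: the paper first reduces to $\E=\Set$ via the (colimit-preserving, exact) sheafification functor and then runs explicit element-wise chases at each degree $F_1, G_1, H_1, F_0, G_0$, whereas you stay internal to $\E$ and invoke uniformly that filtered colimits commute with finite limits in a $1$-topos, so that kernels, images, orbit objects, centrality and equivariance conditions, and equalities of subobjects all pass to the colimit. Both are legitimate; yours is more uniform and scales automatically to the augmented and bi-augmented cases, while the paper's is more concrete. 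The one degree where the paper takes visible extra care is $F_1$: the colimit of the $H_2^i$ is formed in abelian $\E$-groups while the ambient exactness argument naturally lives in $\E$-groups, and the paper resolves the mismatch by using centrality of $\im(\partial_2^i)$ to factor through the abelianization. Your argument absorbs this because, for a \emph{filtered} diagram of abelian groups, the colimits in groups and in abelian groups have the same underlying object (the forgetful functors from algebras over a finitary theory create filtered colimits), so the image computation is unaffected; it would be worth stating this explicitly, since it is the only place where the ``everything is finitary'' bookkeeping is not completely formal. Finally, note that the paper's proof does not explicitly verify the last clause of exactness in \Cref{df:long_htp_seq} (that $\tilde f_0:F_0/H_1\to G_0$ remains a monomorphism), which your argument does cover via preservation of coequalizers and monomorphisms.
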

\begin{proof}
We consider only the case of long homotopy sequences as the other cases are similar.
 As the associated sheaf functor commutes with colimits, it is sufficient to consider the case of discrete toposes,
 and therefore one is reduced to the case $\E=\Set$.

Let $\{ (F_*^i, G_*^i, H_*^i)\}_{i\in I}$ be an $I$-filtered diagram of \emph{long homotopy sequences} in sets:
$$
\hdots H_2^i \xRightarrow{\partial_2} F_1^i \xrightarrow{f_1} G_1^i \xrightarrow{g_1} H_1^i \xRightarrow{\partial_1} F_0^i \xrightarrow{f_0} G_0^i \xrightarrow{g_0} H_0^i
$$
The proposition states that in the category of exact long homotopy sequences,
 $\colim_{i\in I} (F_*^i, G_*^i, H_*^i)= (F_*, G_*, H_*)$ where for $\Box=F, G, H$
\begin{align}\Box_*&=\begin{cases}
\colim_i \Box_*^i &  \text{if} \ \ast >1 \\
\colim'_i \Box_1^i &  \text{if} \ \ast =1 \\
\colim'' \Box_0^i &  \text{if} \ \ast =0
\end{cases}
\end{align}
Here $\colim_i \Box_*^i$ for $\ast>1$ (resp. $\colim'_i \Box_1^i$, and resp. $\colim''_i \Box_0^i$) denotes respectively the colimit in the category of pointed sets
 (resp. groups, and resp. abelian groups).
 In all three cases, the colimit is computed by taking the appropriate quotient of the corresponding coproduct (obtained after forgetting maps in $I$),
 that is respectively the wedge sum, free product and direct sums.
 In the case of groups, $\colim'_i \Box_1^i$ is given by the quotient of the free product $\bigstar_{i\in I}\Box_1^i$ of groups $\Box_1^i$
 by the sub-group generated by the relations in $\Box_1^k$ and $(s_j\circ \mu_{ij}(g_i))\cdot s_i(g_i)^{-1}$ where for $i\leq j$, $\mu_{ij}: \Box_1^i\to \Box_1^j$ and $s_i: \Box_1^i\to \bigstar_{i\in I} \Box_1^i$.

We first prove that the sequence 
\begin{equation}\label{eq:colim_long_seq}
\hdots H_2 \xRightarrow{\partial_2} F_1 \xrightarrow{f_1} G_1 \xrightarrow{g_1} H_1 \xRightarrow{\partial_1} F_0 \xrightarrow{f_0} G_0 \xrightarrow{g_0} H_0
\end{equation}
is a long homotopy sequence. The action of $H_1$ on $F_0$ is obtained by taking the colimit of the action maps:
$$
H_1^i \times F_0^i \rightarrow F_0^i
$$
and using the fact filtered colimits preserve finite products. The rest of the axioms of \Cref{df:long_htp_seq}, \emph{i.e.} points (3), (4) and (5),
 are clear.
 It is then easy to check that the long exact sequence \eqref{eq:colim_long_seq} satisfies the universal properties of colimits.

We finally prove that, if all long homotopy sequences $(F_*^i, G_*^i, H_*^i)$ are exact, then \eqref{eq:colim_long_seq} is exact.
\paragraph{\bf Exactness at $F_i, H_i$ and $G_i$,  for $i>1$:} clear as filtered colimits of abelian groups are exact.
\paragraph{\bf Exactness at $F_1$:} we need to show that the sequence 
$$ H_2 \xRightarrow{\partial_2} F_1 \xrightarrow{f_1} G_1 $$
is exact. 
 We have exact sequence $$\colim'_{i\in I} H_2^i \xRightarrow{\partial_2}\colim'_{i\in I}  F^i_1 \xrightarrow{f_1} \colim'_{i\in I} G^i_1 $$ of $\E$-group objects.  
 Now since the image of $H^i_2\to F^i_1$ is in the center of $F^i_1$ (axiom (4) of \Cref{df:long_htp_seq}),
 it follows that the map $ \colim'_{i\in I} H_2^i \xRightarrow{\partial_2} \colim'_{i\in I}  F^i_1$ factors through the abelianization $ (\colim'_{i\in I} H_2^i)^{ab}=\colim_{i\in I} H_2^i$, where $\colim_{i\in I} H_2^i$ is the colimit in the category of abelian groups, and this concludes.

\paragraph{\bf Exactness at $G_1$:} as in the abelian case.

\paragraph{\bf Exactness at $H_1$:} to prove the exactness at $H_1$, we need to show that given $h\in H_1$ such that $h\cdot \ast=\ast$, then $h$ is in the image of $G_1\to H_1$.  
Since $h\in \colim'_{i\in I} H^i_1$ and $I$ is filtered, there exists an index $i\in I$ and an element $h_i\in H_1^i$ such that $h=s_i(h_i)$ for $s_i: H_1^i\to \colim'_{i\in I} H^i_1$. Hence $h\cdot \ast=\ast$ in $F_0=\colim''_{i\in I} F_0^i $ implies that $\mu_{ij} (h_i)\cdot *=*$ in $F_0^j$ for some $j\geq i$. By exactness of the sequence $ G^j_1\to H^j_1 \xRightarrow{\partial_2} F^j_0$, the element $\mu_{ij} (h_i) $ is in the image of the map $G^j_1\to H^j_1$.  Hence $h=s_i(h_i)=s_j(\mu_{ij}(h_i))$ is in the image of the map $G_1\to H_1$.  This implies the exactness at $H_1$.

\paragraph{\bf Exactness at $F_0$:} we need to show $f_0^{-1}(\ast)=\im (\partial_1).$ For $x\in f_0^{-1}(\ast)$, there is $i\in I$ such that $x\in F_0^i$ and $ f_0^i(x)=\ast$ in $G^i_0$.  Hence by exactness of $H_1^i\to F^i_0\to G_0^i$, there is $h_i\in H_1^i$ such that  $\partial_1^i(h_i)=x$. Hence, $\partial_1(s_i(h_i))=x$ in $H_1$. This implies the exactness at $F_1$.

\paragraph{\bf Exactness at $G_0$:} This is clear.
\end{proof}

\begin{remark}
In particular, one deduces that the category of left (resp. right) unstable exact couples admits filtered colimits.
 Moreover, one can check that the derived exact couple functor, as well as the associated spectral sequence functor,
 commutes with filtered colimits.
\end{remark}

\subsection{Unstable exact couples and spectral sequences}

The notion of exact couples in unstable homotopy is very old (see \cite{Fed56}, but also \cite{BK72}).
 It is a delicate subject, because one needs to take care of $\pi_1$-actions (\emph{i.e.} monodromy as
 explained in \Cref{df:monodromy}).
 The next definition is a synthesis of the approaches via exact couples (and Rees system, see \Cref{df:two_exact_couples})
 and the direct one from \cite[IX, \textsection 4]{BK72}, with the aim to apply it to $\infty$-categories
 and the previously introduced homotopy exact functors.
\begin{definition}\label{df:unstable_ecpl}
Let $\E$ be a topos.
 A \emph{right (\emph{resp.} left) unstable exact couple of degree $1$} is a triangle
$$
\xymatrix@R=12pt@C=30pt{
D^{**}\ar_\alpha^{(-1,-1)}[rr]  && D^{**}\ar@{=>}_/5pt/\beta^{(1,0)}[ld]
 & \text{resp.} & \bar D^{**}\ar_\alpha^{(-1,-1)}[rr]  && \bar D^{**}\ar_/5pt/{\bar \beta}^{(0,0)}[ld]
 \\
& E^{**}\ar_/-5pt/\gamma^{(0,0)}[lu] & 
 && & E^{**}\ar@{=>}_/-5pt/{\bar \gamma}^{(1,0)}[lu] & 
}
$$
of objects of $\E_{\pi_*}^\Upsilon$, $\Upsilon=\{ (p,q)\in \Z^2 \mid q-p\geq 0\}$, such that
 the three maps are bi-homogeneous with indicated bidegree and for all $q$, the sequence
\begin{align*}
\cdots \to D^{p, q+1}\xrightarrow{\alpha} D^{p-1, q} \xRightarrow{\beta} E^{p, q} \xrightarrow{\gamma} D^{p, q} \xrightarrow{\alpha} \cdots  D^{q, q+1} \xRightarrow{\beta} E^{q+1, q+1} \xrightarrow{\gamma} D^{q+1, q+1}  \xrightarrow{\alpha} D^{q, q} \\
\text{resp.}
\cdots \to \bar D^{p+1, q+1}\xrightarrow{\bar \alpha} \bar D^{p, q}
 \xRightarrow{\bar \beta} E^{p, q} \xrightarrow{\bar \gamma} \bar D^{p+1, q} \xrightarrow{\bar \alpha}
 \cdots  \bar D^{q, q+1} \xRightarrow{\bar \beta} E^{q, q+1} \xrightarrow{\bar \gamma} \bar D^{q+1, q+1}  \xrightarrow{\bar \alpha} \bar D^{q, q}
\end{align*}
is an exact long homotopy sequence in $\E$, in the sense of \Cref{df:long_htp_seq}.

Given any integer $r \geq 1$, a right (resp. left) unstable exact couple of degree $r$ is defined similarly as above
 except that $\gamma$ (resp. $\bar \beta$) has degree $(r-1,r-1)$.
\end{definition}
The situation of right and left unstable exact couples is completely analogous.
 In the following, without precision, an unstable exact couple will be a right one.

\begin{example}\label{ex:unst_exact_couples}
\begin{myenum}
\item Let $\T$ be an $\infty$-topos, and consider a (decreasing) tower in $\T$:
$$
\cdots \rightarrow X_n \xrightarrow{f_n} X_{n-1} \rightarrow X_1 \rightarrow X_0
$$
 \ie an object of the $\infty$-category $\T^{\overleftarrow \N}$, where $\overleftarrow \N$ is the category
 associated with the opposite ordered set of non-negative integers.
Then one gets a right unstable exact couple of degree $1$ by considering the homotopy fibre $i_p:F_p=\Ker(f_p) \rightarrow X_p$ and putting:
$$
\cdots \xrightarrow{\alpha=f_{p*}} D^{p-1,q}=\pi_{q-p+1}(X_{p-1}) \xRightarrow{\beta=\partial}
 E^{p,q}=\pi_{q-p}(F_p)  \xrightarrow{\gamma=i_{p*}} D^{p,q}=\pi_{q-p}(X_p)
$$
according to \Cref{ex:long_htp_seq}(1).

Dually, if one considers the homotoy cofiber $\pi_p:X_p \rightarrow \coker(f_p)=:C_p$, one gets a left unstable
 exact couple of degree $1$, $E^{p,q}=\pi_{q-p}(C_p)$, $\bar D^{p,q}=\pi_{q-p}(X_p)$.
\item Let $\Pi_*:\C_* \rightarrow \E_{\pi_*}$ be a homotopy functor in the sense of \Cref{df:long_htp_seq}.
 Then one associates to any tower $X_\bullet$ in $\C_*^{\overleftarrow \N}$
 a right unstable exact couple of degree $1$
 using the above recipe: $D^{p,q}=\Pi_{q-p}(X_p)$, $E^{p,q}=\Pi_{q-p}(F_p)$, where $F_p$ is the homotopy fibre
 of $X_p \rightarrow X_{p-1}$.
\item The preceding considerations dualize in the obvious way: given a cohomotopy functor $\Pi_*:\C_*^{op} \rightarrow \E_{\pi_*}$,
 one associates to any (increasing) tower\footnote{this notation is designed to fit in with the case
 of the coniveau tower in \Cref{num:coniveau_ssp};}
 $X^\bullet$ in $\C_*^{\overrightarrow \N}$ a right unstable exact couple of degree $1$
 by considering the homotopy cofiber $C^p=\coker(X^{p-1} \rightarrow X^p)$,
 and using the formulas: $D^{p,q}=\Pi_{q-p}(X^p)$, $E^{p,q}=\Pi_{q-p}(C^p)$.
\end{myenum}
\end{example}

\begin{remark}
The distinction between right and left exact couples is special to the unstable case.
 Up to our knowledge, it has not been highlighted in the literature yet, because one usually restricts
 to tower of fibrations.
 We refer the reader to the \Cref{df:two_exact_couples} for the relevance of considering both forms
 of exact couples.
\end{remark}

\begin{notation}\label{num:unstable_ssp}
Let $(D,E,\alpha,\beta,\gamma)$ be an exact couple of degree $r$.
 
One associates to it a co-augmented cohomotopical complex (\Cref{num:cohomotopical_notation})
 by defining the boundary operator on $E$ using the formula $d_r=\beta \circ \gamma$:
$$
\cdots \rightarrow E^{q-(n+1)r,q-(n+1)r+n+1} \rightarrow E^{q-nr,q-nr+n} \rightarrow \cdots \rightarrow E^{q-r,q-r+1} \xRightarrow{d_r^{q-r,q-r+1}} E^{q,q} \xrightarrow{\epsilon} \tilde D^{q,q}
$$
where $\tilde D^{q,q}$ is the cokernel of the pointed map $D^{q+r,q+r} \xrightarrow \alpha D^{q,q}$ and the co-augmentation map is the composite:
 $\epsilon:E^{q,q} \xrightarrow{\gamma} D^{q,q} \twoheadrightarrow \tilde D^{q,q}$.
 Note that $d_r$ is homogeneous of bidegree $(r,r-1)$.\footnote{We follow here the indexing
 of Bousfield and Kan, though it is unusual in the general theory of spectral sequences.}

Using the classical procedure, one can now define a \emph{derived exact couple} by the following formulas:
$$
D'=\im(\alpha), (E')^{p,q}=\pi^p(E_{r}^{*,q},\epsilon).
$$
The bi-grading on $D'$ is obtained by the bi-grading on $D$, according to the inclusion $D^{\prime p,q} \subset D^{p,q}$.
 Similarly, one defines the bi-grading on $E'$ such that $E^{\prime p,q}$ is a sub-quotient of $E^{p,q}$.
 Then one obtains the maps $\alpha'$, $\beta'$, $\gamma'$ as the one induced respectively by $\alpha$, $\beta$, $\gamma$
 by the universal properties of the image and of the homotopy of an augmented unstable complex.
 Note that the resulting homotopy exact couple $(D',E',\alpha',\beta',\gamma')$ has degree $r+1$.
\end{notation}
\begin{definition}\label{df:unstable_ssp}
We call $(D',E',\alpha',\beta',\gamma')$ the derived unstable exact couple
 associated with $(D,E,\alpha,\beta,\gamma)$.
 Iterating this procedure, for any $s>0$, one defines the $s$-th derived unstable exact couple
 $(D^{(s)},E^{(s)},\alpha^{(s)},\beta^{(s)},\gamma^{(s)})$ which has degree $r+s$.

We define the \emph{unstable spectral sequence} associated with the exact couple $(D,E,...)$
 by putting, for $n \geq r$, $s=n-r$:
$$
E_n^{p,q}=(E^{(s)})^{p,q},
$$
with differential $d_n^{p,q}=\beta^{(s)} \circ \gamma^{(s)}:E_n^{p,q}  \rightarrow E_n^{p+n+1,q+n}$.
 We view $E_n^{*,q}$ as an augmented unstable complex with augmentation
 $\epsilon^{(s)}=E_{n}^{q,q} \rightarrow \tilde D_n^{q+n,q+n}$.
\end{definition}
One will remember that the $(n+1)^{th}$ term $E_{n+1}^{*,q}$ is obtained as the (co)homotopy
 of the augmented unstable complex $E_{n}^{*,q}$. Moreover, we view it as an \emph{augmented} unstable complex
 (\Cref{unstable-cx}).

\begin{remark}\label{rem:compute_E_r_BK}
The consideration of augmented complexes seems to be new in this context.
 While not changing the definitions, we find it very enlightening.
 Note in particular, that one gets the following formulas to directly compute the $r$-th derived exact couple
 (as in \cite[IX, 4.1]{BK72}):
\begin{align*}
D_{r+1}^{p,q}&=\im\big(\alpha^r:D^{p+r,q+r} \rightarrow D^{p,q}\big), \\
E_{r+1}^{p,q}&=\Ker\big(E^{p,q} \rightarrow D^{p,q}/D^{p,q}_{r+1}\big)/\Ker\big(D^{p-1,q} \xrightarrow{\alpha^r} D^{p-r-1,q-r}\big),
\end{align*}
where the quotient when $p=q$ means the set of orbits for the relevant group action.
 On the other hand, the notion of derived unstable exact couple is useful for inductive arguments.
\end{remark}

\begin{example}\label{ex:unstable_ssp}
\begin{myenum}
\item Consider the notation of \Cref{ex:unst_exact_couples}(1).
 We let $X_\infty=\lim_n X_n$. Then the spectral sequence associated with the exact couple of \emph{loc. cit.}
 starts from the $E_1$-term and abuts to $\pi_{q-p}(X_\infty)$:
$$
E_1^{p,q}=\pi_{q-p}(F_p) \Rightarrow \pi_{q-p}(X_\infty).
$$
When $\T$ is the $\infty$-topos of spaces, and one starts from a tower of fibrations $X_\bullet$,
 this is precisely the Bousfield-Kan spectral sequence (\cite[IX, 4.2]{BK72}).
 We refer to \emph{loc. cit.}, 5.3 and 5.4 for conditions of convergence of this spectral
 sequence. In fact, strong convergence will always be fulfilled in our examples.
\item The same construction is valid starting from a homotopy functor $\Pi_*:\C_* \rightarrow \E_{\pi_*}$
 as in \Cref{ex:unst_exact_couples}(2), such that $\C_*$ admits sequential limits.
 The same observation holds dually for a cohomotopy functor as in \Cref{ex:unst_exact_couples}(3).
 The abutment is given by the cohomotopy of the colimit (topologically,
 this compares to a tower of cofibrations).
\end{myenum}
\end{example}

\begin{remark}
The above examples extend the discussion of \cite[1.2.2]{LurieHA} to the unstable case.

In fact, if we work in a stable $\infty$-category $\mathcal C_*$,
 and $\Pi_*$ is the restriction of a homology functor to non-negative degrees,
 then the spectral sequence in point (2) of the above example
 is half of the spectral sequence one naturally derives from the given tower by considering
 the whole homology functor.
 On the other hand, one can recover the whole spectral sequence by using
 the suspended tower $X_\bullet[n]$ for arbitrary $n \geq 0$. Thus, all the results
 that we will get in the unstable case will imply the analogous result
 for the complete spectral sequences in a stable situation.
\end{remark}

\subsection{The degeneracy criterion}\label{sec:degen_lemma}

\begin{notation}\label{num:tower_under}
We will consider a situation similar to \Cref{ex:unstable_ssp}(2).
 Let $\C_*$ be a pointed $\infty$-category and $\Pi_*:\C_* \rightarrow \E_{\pi_*}$
 be a homotopy functor.

We let $X$ be an object of $\C_*$ and consider a tower of objects under $X$,
$$
 \cdots \to X_n \xrightarrow{f_n} X_{n-1} \to\cdots \to X_1 \xrightarrow{f_1} X_0
$$
with projection maps $\pi_n:X \rightarrow X_n$. In other words,
 $X \rightarrow X_\bullet$ is an object of $(X/\C_*)^{\overleftarrow \N}$.
 We usually extend  this tower slightly by letting $f_0:X_0 \rightarrow X_{-1}=*$
 be the (essentially) unique map.
\end{notation}
\begin{definition}
We will say that the tower $X/X_\bullet$ is bounded if there exists an integer $d \in \N$
 such that for all $n>d$, $p_n:X \rightarrow X_n$ is an isomorphism
 in the $\infty$-category $\C_*$.
 We say that $X/X_\bullet$ is $d$-bounded ($\infty$-bounded for no condition).
\end{definition}

\begin{remark}
One can say that $X/X_\bullet$ is a coaugmented tower.
 This is the (pointed) $\infty$-categorical generalization of a cofiltered object.
\end{remark}

\begin{notation}\label{df:two_exact_couples}
Consider again the situation of an arbitrary coaugmented tower $X/X_\bullet$,
 as in \Cref{num:tower_under}.
 By applying the octahedron property (Fib4) of \Cref{prop:unstable_triangulated},
 one deduces for any integer $p\geq 0$ an octahedron diagram\footnote{Apply the axiom
 to the composable maps $X \xrightarrow{\pi_p} X_p \xrightarrow{f_p} X_{p-1}$,
 use the right-hand square in (Fib4), and use the left-hand square to complete the diagram.}:
\begin{equation}\label{eq:octaedron}
\xymatrix@=10pt{
& X_{p}\ar^{f_p}[rr]\ar@{-->}[dd] && X_{p-1}\ar@{-->}[dd]\ar@{-->}[ld] & \\
X\ar^{\pi_p}[ru]\ar@{}|/2pt/{(*)}[r]
 & & F_p\ar@{-->}[ld]\ar[lu]\ar@{}|/3pt/{(*)}[u]\ar@{}|/3pt/{(*)}[d]\ar@{}|/2pt/{(!)}[l]
 & & X\ar@{}|/2pt/{(*)}[l]\ar[lu] \\
& G_p\ar[rr]\ar[lu] && G_{p-1}\ar[ru]\ar[lu] &
}
\end{equation}
where a symbol (*) refers to a fiber sequence, the dotted arrows indicate
 a boundary map of the form $\Omega A \rightarrow B$,
 the other parts of the diagram are commutative except for (!) which is anti-commutative.
Note in particular that $F_p$ (resp. $G_p$) is the fiber of $f_p$ (resp. $\pi_p$).

If one applies the homotopy functor $\Pi_*$ to this diagram, one gets the following diagrams
 for $p\leq q$:
\begin{equation}\label{eq:doucle-ec}
\xymatrix@C=4pt@R=14pt{
& \Pi_{q-p}(X_{p})\ar^{\alpha}[rr]\ar@{=>}_b[dd]
 && \Pi_{q-p}(X_{p-1})\ar@{=>}^b[dd]\ar@{=>}^{\beta}[ld] &&
 \Pi_{q-p}(X_{p})\ar^{\alpha}[rr]\ar@{=>}_b[dd]
 && \Pi_{q-p}(X_{p-1})\ar@{=>}^b[dd]
 \\
\Pi_{q-p}(X)\ar^a[ru]\ar@{}|/8pt/{(3*)}[r]
 & & \Pi_{q-p}(F_p)\ar@{=>}_{\bar \gamma}[ld]\ar^{\gamma}[lu]\ar@{}|/2pt/{(1*)}[u]\ar@{}|/2pt/{(2*)}[d]\ar@{}|/14pt/{(1!)}[l]\ar@{}|/14pt/{(2)}[r]
 & & \Pi_{q-p}(X)\ar_a[lu] &
 & \Pi_{q-p}(X)\ar@{}|/2pt/{(3)}[d]\ar@{}|/2pt/{(4)}[u]\ar^a[lu]\ar_a[ru] & \\
& \Pi_{q-p}(G_p)\ar_{\bar \alpha}[rr]\ar^c[lu]
 && \Pi_{q-p}(G_{p-1})\ar_c[ru]\ar_{\bar \beta}[lu]
 && \Pi_{q-p}(G_p)\ar_{\bar \alpha}[rr]\ar^c[ru]
 && \Pi_{q-p}(G_{p-1})\ar_c[lu]
}
\end{equation}
where we put $\alpha=f_{p*}$, $a=g_{p*}$.
 All diagrams $(?*)$ (resp. (?), (1!)) are homotopy long exact sequences
 (resp. commutative, anti-commutative) in the $1$-topos $\E$.
 There are two abuses of notation. First,  all double arrows are boundary maps,
 and therefore are of the form $\Pi_{q-p}(-) \rightarrow \Pi_{q-p-1}(-)$.
 Second, we have used the notation two times for the maps $a$, $b$, $c$.
 This abuse can be solved by making the degrees in the notation precise. 
 Finally, using the last two relations ((2) and (5)) of \Cref{prop:unstable_triangulated}(Fib4), one gets:
\begin{enumerate}
\item[(5)] $\gamma \circ \bar \beta=a \circ c$ 
\item[(6)] $b \circ \alpha=-\bar \alpha \circ b$.
\end{enumerate}

Consider the left commutative diagram above. Then the upper (resp. lower) triangle defines
 a right (resp. left) unstable exact couple of degree $+1$ in the sense of \Cref{df:unstable_ecpl}.
 Further, the above diagram is the unstable analog of a \emph{Rees system} in the terminology of Eilenberg and Moore
 (see \cite[3.1]{McC}).
 In particular, both exact couples induce the same spectral sequence, up to the sign of the differential,\footnote{One could have avoided
 the sign issue by changing $\bar \gamma$ for example. However, the sign of the differential does not change either its kernel
 or its cokernel, so that it will not interfere with our computations at any time.}
 by using the procedure of \Cref{num:unstable_ssp}
 and \Cref{df:unstable_ssp}:
\begin{equation}\label{eq:unst-ssp-tower}
E_1^{p,q}=\Pi_{q-p}(F_p) \Rightarrow \Pi_{q-p}(X).
\end{equation}
Indeed, one computes the two possible differentials on $E_1^{**}$ as follows:
\begin{equation}\label{eq:relation-differentials}
d_1:=\beta \circ \gamma\stackrel{(2)}=\bar \beta \circ b \circ \gamma\stackrel{(1!)}=-\bar \beta \circ \bar \gamma=:-d_1'
\end{equation}
We still put $D_1^{p,q}=\Pi_{q-p}(X_p)$
 (resp. $\bar D_1^{p,q}=\Pi_{q-p}(G_{p-1})$), so that $(E_1,D_1,\alpha,\beta,\gamma)$ is
 the exact couple of \Cref{ex:unst_exact_couples}(2).
\end{notation}

\begin{remark}
Beware that the symbol $\Rightarrow$ does not imply any convergence assertion.
 However, if the tower $X/X_\bullet$ is bounded, one gets that
 the preceding spectral sequence
 is strongly convergent in the sense of \cite[IX, 5.3]{BK72}.
\end{remark}

\begin{example}
The following example can help the reader understand our construction.
 We let $X$ be a pointed CW-complex (or a pointed simplicial sheaf), and $X_p$ be the $p$-th stage
 of the associated Postnikov tower. Then with the above notation,
 $F_p$ is the Eilenberg-MacLane space $K(\pi_p(X),p)$ and $G_p$ is the $p$-th stage of the Moore tower
 (in particular, $G_p$ is $p$-connected). The above unstable spectral sequence
 is well-known in topology (the so-called \emph{unstable Atiyah-Hirzebruch spectral sequence}).
\end{example}

\begin{notation}\label{num:co_augmentations_ssp}
Let us continue to introduce notation in the situation of the preceding paragraph.
 Fix an integer $q \geq 0$. We define a canonical map $\tau$ by the following commutative diagram:
\begin{equation}\label{eq:co-augment_hcpx}
\xymatrix@C=28pt@R=0pt{
& \Pi_q(X_0) & \\
\Pi_q(X)\ar|a[ru]\ar@{-->}|\tau[rr] & & \Pi_q(F_0).\ar|/2pt/\gamma_/-4pt/\sim[lu] \\
 & \Pi_q(G_{-1})\ar|/2pt/c^/-4pt/\sim[lu]\ar|{\bar \beta}[ru] &
}
\end{equation}
which follows from the relation (5).
 
We note finally that the $q$-truncated cohomotopical complex $E_1^{*,q}$
 admits $\tau$ as an augmentation (see \Cref{df:coaugmented_cohtp_cpx}).
 Going back to homotopical indexing, one gets the following co-augmented homotopical complex:
\begin{equation}
\label{eq:augmented_hcpx0}
* \rightarrow \Pi_q(X)\xrightarrow{\tau} \Pi_q(F_0)\xrightarrow{d_1^{0,q}} \Pi_{q-1}(F_1)
 \rightarrow \cdots \rightarrow \Pi_1(F_{q-1}) \xRightarrow{d_1^{q-1,q}} \Pi_0(F_q)
\end{equation}
Before stating our main result,
 we start with a lemma which shows that it is possible to directly compute all the terms
 at once on the diagonal $p=q$.
\end{notation}
\begin{lemma}\label{lm:abstract-ssp-diag}
Consider the above notation, and assume that the tower $X/X_\bullet$ is $d$-bounded for $d \in \N$.
 Then one gets:
$$
E_r^{q,q}=
\begin{cases}
\im\big(\Pi_0(X) \xrightarrow \tau \Pi_0(F_0)\big) & q=0 \wedge r>d \\
\gamma^{-1}\big(\im\big(\Pi_0(X) \xrightarrow a \Pi_0(X_q)\big)\big)/\Pi_1(X_{q-1}) & (0< q \leq d) \wedge r>\max(d-q,q-1) \\
* & (q>d) \wedge r\geq 1 \\
\end{cases}
$$
where, on the second line, we used the morphism of pointed $\E$-objects $\gamma:\Pi_0(F_q) \rightarrow \Pi_0(X_q)$,
 and the action of $\Pi_1(X_{q-1})$ on $\Pi_0(F_q)$.

In particular, the abutment filtration on $\Pi_0(X)$ is finite, with graded pieces given by
 the pointed $\E$-objects $E_\infty^{q,q}$ for $0\leq q \leq d$.
\end{lemma}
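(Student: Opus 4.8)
The plan is to compute the derived exact couples explicitly on the diagonal $p=q$, using the formulas for $D_{r+1}^{p,q}$ and $E_{r+1}^{p,q}$ recorded in \Cref{rem:compute_E_r_BK}, and to exploit the $d$-boundedness hypothesis to see which iterated powers of $\alpha$ become isomorphisms or trivial. The starting exact couple is $D_1^{p,q}=\Pi_{q-p}(X_p)$, $E_1^{p,q}=\Pi_{q-p}(F_p)$, with $\alpha=f_{p*}$, and the relevant pieces on the diagonal are $D_1^{q,q}=\Pi_0(X_q)$, $E_1^{q,q}=\Pi_0(F_q)$, together with $D_1^{q-1,q}=\Pi_1(X_{q-1})$ which appears in the quotient in the formula for $E_{r+1}^{q,q}$. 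First I would dispose of the case $q>d$: since $f_n$ (and hence $\pi_n$) is an isomorphism for $n>d$, the fiber $F_q$ is the zero object $*$, so $E_1^{q,q}=\Pi_0(F_q)=*$ and all further pages vanish trivially, giving the third line.

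Next, for the case $q=0$: here $E_r^{0,0}=E_r^{q,q}$ with $p=q=0$. One has $D^{p-1,q}=D^{-1,0}=\Pi_1(X_{-1})=\Pi_1(*)=*$, so the denominator $\Ker\big(D^{p-1,q}\xrightarrow{\alpha^r}\cdots\big)$ is trivial and $E_{r+1}^{0,0}=\Ker\big(E^{0,0}\to D^{0,0}/D^{0,0}_{r+1}\big)$ (orbit set, but with trivial group acting). Now $D^{0,0}_{r+1}=\im\big(\alpha^r:\Pi_0(X_r)\to\Pi_0(X_0)\big)$. For $r>d$ the composite $X\to X_r$ is an isomorphism, so $\im(\alpha^r)=\im\big(\Pi_0(X)\xrightarrow{a}\Pi_0(X_0)\big)$, whence the quotient $D^{0,0}/D^{0,0}_{r+1}$ identifies with $\Pi_0(X_0)/\im(a)$, and the kernel of $E^{0,0}=\Pi_0(F_0)\to\Pi_0(X_0)/\im(a)$ is exactly $\gamma^{-1}(\im(a))$. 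Finally I would use the co-augmentation diagram \eqref{eq:co-augment_hcpx}: by relation (5), the map $\tau$ is characterized by $\gamma\circ(\text{something})$ and $a$ factors so that $\gamma^{-1}(\im(a))=\im(\tau)$; more precisely, since $F_{-1}=\Ker(\pi_{-1}:X\to *)\simeq X$ identifies $\Pi_q(G_{-1})\xrightarrow{\sim}\Pi_q(X)$ and $\gamma$ on $F_0$ is the relevant map, chasing \eqref{eq:co-augment_hcpx} gives $\im(\tau)=\gamma^{-1}(\im(a))$ inside $\Pi_0(F_0)$, which is the first line.

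For the generic case $0<q\le d$: now $E_{r+1}^{q,q}=\Ker\big(E^{q,q}\to D^{q,q}/D^{q,q}_{r+1}\big)/\Ker\big(D^{q-1,q}\xrightarrow{\alpha^r}D^{q-r-1,q-r}\big)$, where the quotient is the orbit set for the $\Pi_1(X_{q-1})$-action (monodromy) on $\Pi_0(F_q)$. For the numerator: $D^{q,q}_{r+1}=\im\big(\alpha^r:\Pi_0(X_{q+r})\to\Pi_0(X_q)\big)$, and once $r>d-q$ the map $X\to X_{q+r}$ is an isomorphism, so this image is $\im\big(\Pi_0(X)\xrightarrow{a}\Pi_0(X_q)\big)$, giving $\Ker\big(\Pi_0(F_q)\xrightarrow{\gamma}\Pi_0(X_q)\big/\im(a)\big)=\gamma^{-1}(\im(a))$. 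For the denominator: $D^{q-1,q}=\Pi_1(X_{q-1})$ and $\alpha^r:\Pi_1(X_{q-1})\to\Pi_1(X_{q-1-r})$; once $r\ge q$ the target is $\Pi_1(X_{q-1-r})$ with $q-1-r<0$, i.e.\ $\Pi_1(*)=*$ (or $X_{-1}=*$ and all earlier terms), so $\Ker(\alpha^r)=\Pi_1(X_{q-1})$ entirely. Hence for $r>\max(d-q,q-1)$ we get $E_r^{q,q}=\gamma^{-1}(\im(a))/\Pi_1(X_{q-1})$, the second line. The last sentence of the statement then follows: the abutment filtration on $\Pi_0(X)=\Pi_0(X_\infty)$ has $d+1$ steps (indexed by $0\le q\le d$), with $E_\infty^{q,q}=E_r^{q,q}$ for $r$ large as computed, using strong convergence for the bounded tower (as noted in the remark following \Cref{df:two_exact_couples}).

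\textbf{Main obstacle.} The delicate point is not the stabilization-of-$\alpha^r$ bookkeeping — that is routine — but the careful handling of the \emph{non-abelian and orbit-set} features on the diagonal: one must check that the formulas of \Cref{rem:compute_E_r_BK} genuinely apply when $E^{q,q}=\Pi_0(F_q)$ is only a pointed set with a $\Pi_1(X_{q-1})$-action, that "$\Ker$ of a map to a quotient pointed set" and "quotient by $\Ker(\alpha^r)$" mean the right orbit-theoretic things, and — for $q=1$ versus $q=0$ — that the $\Pi_1$-action and the identification $\Pi_q(G_{-1})\simeq\Pi_q(X)$ interact correctly with relation (5) in the co-augmentation diagram \eqref{eq:co-augment_hcpx}. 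Verifying that $\im(\tau)$ (as defined there) really equals $\gamma^{-1}(\im(a))$, compatibly with the group quotient, is the one spot that requires genuine care rather than formal manipulation.
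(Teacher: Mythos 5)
Your proposal is correct and takes essentially the same route as the paper: the paper's entire proof is the single sentence ``This follows from \Cref{rem:compute_E_r_BK} taking into account the $d$-boundedness condition,'' and your argument is precisely a detailed unwinding of that, including the identification $\im(\tau)=\gamma^{-1}(\im(a))$ via the isomorphism $\gamma:\Pi_0(F_0)\xrightarrow{\sim}\Pi_0(X_0)$ in diagram \eqref{eq:co-augment_hcpx} and the correct orbit-set reading of the quotient by $\Pi_1(X_{q-1})$. One small caveat: in the case $q>d$ you write that ``$f_n$ (and hence $\pi_n$) is an isomorphism for $n>d$,'' which reverses the hypothesis — $d$-boundedness asserts that $\pi_n$ is an isomorphism for $n>d$, which only yields that $f_n$ is an isomorphism (and hence $F_n\simeq *$) for $n>d+1$, while $F_{d+1}\simeq \mathrm{fib}(\pi_d)=G_d$ need not vanish; this boundary value $q=d+1$ is already glossed over in the statement itself (and in the paper's one-line proof), and is repaired for large $r$ by the same $\alpha^r$-stabilization and transitivity-of-monodromy bookkeeping you carry out in the case $0<q\leq d$.
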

\begin{proof}
This follows from \Cref{rem:compute_E_r_BK} taking into account the $d$-boundedness condition.
\end{proof}

Our main technical result is the following degeneracy criterion:
\begin{theorem}\label{thm:degeneracy}
Consider the above notation, and assume the tower $X/X_\bullet$ is $d$-bounded for $d \in \N \cup \{+\infty\}$.
 Fix an integer $q \geq 0$, and let $I$ be the set of pairs of integers $(p,i)$ such that 
 $0 \leq p \leq \min(d,q)$, $i \in \{0,1\}$, except that $i \neq 1$ if $p=0$.
 Then the following conditions are equivalent.
\begin{enumerate}
\item[(i)] $\forall (p,i) \in I$,
 the pointed map $\bar \alpha:\Pi_{q-p}(G_{p-i}) \rightarrow \Pi_{q-p}(G_{p-i-1})$ is trivial.
\item[(i')] $\forall (p,i) \in I$,
 the pointed map $\bar \beta:\Pi_{q-p}(G_{p-i-1}) \rightarrow \Pi_{q-p}(F_{p-i})$ has trivial kernel,
 and is even a monomorphism if $q>0$.
\item[(ii)] $\forall (p,i) \in I$, the pointed maps $c:\Pi_{q-p}(G_{p-i}) \rightarrow \Pi_{q-p}(X)$
 and $b \circ \alpha:\Pi_{q-p+1}(X_{p-i}) \rightarrow \Pi_{q-p+1}(X_{p-i-1}) \rightarrow \Pi_{q-p}(G_{p-i-1})$ are trivial.
\item[(iii)] For the fixed integer $q$, the coaugmented homotopical complex \eqref{eq:augmented_hcpx0} is exact
 in the sense of \Cref{df:coaugmented_cohtp_cpx}.
\end{enumerate}
Note that point (iii) implies that:
$$
E_2^{p,q}=\begin{cases}
\Pi_q(X) & p=0 \wedge q>0, \\
* & 0<p<q.
\end{cases}
$$
\end{theorem}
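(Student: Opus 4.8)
The plan is to work throughout inside the Rees-type diagram \eqref{eq:doucle-ec} attached to the coaugmented tower, and to single out its \emph{left} exact couple, with $\bar D^{p,q}=\Pi_{q-p}(G_{p-1})$ and $E^{p,q}=\Pi_{q-p}(F_p)$, the structure maps $\bar\alpha,\bar\beta,\bar\gamma$ being those induced on $\Pi_*$ by the octahedral fiber sequences $G_p\xrightarrow{\bar\alpha}G_{p-1}\xrightarrow{\bar\beta}F_p$ of \eqref{eq:octaedron} (with the conventions $G_{-1}=X$, $F_0=X_0$), so that $\bar\gamma$ is their boundary map. The inputs are: the long homotopy exact sequences --- and their strong-exactness refinements in the sense of \Cref{df:long_htp_seq} --- of those fiber sequences as well as of $G_p\to X\to X_p$ and $F_p\to X_p\to X_{p-1}$; the functoriality of homotopy fibers; the relations recorded in \Cref{df:two_exact_couples}, notably (5) $\gamma\bar\beta=ac$, (6) $b\alpha=-\bar\alpha b$ and the identity $\beta\gamma=\bar\beta b\gamma$ behind \eqref{eq:relation-differentials}; and the $d$-boundedness, which forces $G_p\simeq *$ for $p>d$, so that only finitely many of the $\bar\alpha$ and of the terms of \eqref{eq:augmented_hcpx0} can be non-trivial. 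I would also record at the outset, using \eqref{eq:co-augment_hcpx} and (5), that the complex \eqref{eq:augmented_hcpx0} is exactly the unravelling of this left couple: its differential is $d_1=\pm\bar\beta\bar\gamma$ and its augmentation is $\tau=\bar\beta\colon\bar D^{0,q}=\Pi_q(X)\to E^{0,q}=\Pi_q(F_0)$.

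First I would prove (i) $\Leftrightarrow$ (i') $\Leftrightarrow$ (ii) pointwise in $(p,i)\in I$, by chasing the relevant long homotopy exact sequences. Exactness of the left couple at $E^{p,q}$ along $G_{p-i}\to G_{p-i-1}\to F_{p-i}$ reads $\im(\bar\alpha)=\Ker(\bar\beta)$, which is (i) $\Leftrightarrow$ (i') at the level of kernels; when $q-p\ge1$ the maps in sight are morphisms of $\E$-groups, so ``trivial kernel'' coincides with ``monomorphism'', while for $q-p=0$ (the case $p=q>0$) the monomorphism refinement in (i') is extracted from strong exactness, via the orbit space $\Pi_0(G_{p-i})/\Pi_1(F_{p-i})$ together with the neighbouring vanishings prescribed by (i). For (i) $\Leftrightarrow$ (ii): functoriality of homotopy fibers gives $c=c'\circ\bar\alpha$ for the fiber inclusions $G_{p-i}\to X$, $G_{p-i-1}\to X$, so triviality of $\bar\alpha$ forces that of $c$, hence (by exactness of $G_{p-i}\to X\to X_{p-i}$) surjectivity of the boundary $b\colon\Pi_{q-p+1}(X_{p-i})\to\Pi_{q-p}(G_{p-i})$, while relation (6) turns triviality of $\bar\alpha$ into that of $b\alpha$; conversely, triviality of $c$ makes $b$ surjective, and then triviality of $b\alpha$ together with (6) forces $\bar\alpha$ trivial. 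The genuinely delicate point here --- and the step I expect to be the main obstacle --- is the non-abelian low-degree bookkeeping: reconciling the three $\Pi_1$-actions carried by the three fiber sequences, the transitivity statements they produce, and the stabiliser inclusions (of the type $\Ker\beta\subseteq\Ker b$) needed to upgrade ``trivial kernel'' to ``monomorphism'' for $\bar\beta$ in degree $0$; I anticipate needing several conditions of $I$ simultaneously and, probably, a descending induction on $p$ to organise it.

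Next I would prove (i) $\Leftrightarrow$ (iii). For (i) $\Rightarrow$ (iii): once every $\bar\alpha$ indexed by $I$ is trivial, the long exact sequence of the left couple breaks, in the relevant range, into short exact pieces $\bar D^{p,q}\xrightarrow{\bar\beta}E^{p,q}\xrightarrow{\bar\gamma}\bar D^{p+1,q}$ with $\bar\beta$ a monomorphism and $\bar\gamma$ an epimorphism, after which the standard computation of the homology of an unravelled exact couple shows \eqref{eq:augmented_hcpx0} is exact, the augmentation part reducing to $\Ker\tau=\Ker\bar\beta=\im\bar\alpha=*$ and $\im\tau=\im\bar\beta=\Ker\bar\gamma=\Ker d^0$. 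For (iii) $\Rightarrow$ (i): here I would argue by descending induction on $p$, feeding the vanishing of the interior homology of \eqref{eq:augmented_hcpx0} into the description of $E_2^{p,q}$ from \Cref{rem:compute_E_r_BK} applied to the left couple, using the $d$-boundedness --- which makes $\bar\alpha$ eventually invertible, so that the vanishing propagates down the tower --- and \Cref{lm:abstract-ssp-diag} to control the diagonal spot $p=q$; when $d=+\infty$ the complex \eqref{eq:augmented_hcpx0} is still finite, so the same induction starts directly from the top.

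Finally, for the displayed consequence: assuming (iii), exactness of \eqref{eq:augmented_hcpx0} gives $\Ker\tau=*$ and $\im\tau=\Ker d^0$, so that when $q>0$ the map $\tau$, being a morphism of $\E$-groups with trivial kernel, identifies $\Pi_q(X)$ with $\Ker d^0$, whence $E_2^{0,q}=\Ker d^0\cong\Pi_q(X)$; and the vanishing of the interior homology of \eqref{eq:augmented_hcpx0} at the spots $0<p<q$ is, by definition of the spectral sequence, precisely the assertion $E_2^{p,q}=*$.
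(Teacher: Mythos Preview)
Your overall architecture matches the paper's: work with the left exact couple $(\bar D,E,\bar\alpha,\bar\beta,\bar\gamma)$, identify $\tau$ with $\bar\beta$ in degree~$0$, and prove the equivalences by chasing the long exact sequence $(2*)$ and the octahedral relations. Your arguments for (i)$\Leftrightarrow$(ii) (via relations (3) and (6) and the exactness~(3$*$)) and for (i)$\Rightarrow$(iii) (splitting the left couple into short exact pieces once the $\bar\alpha$ vanish) are essentially the paper's. For (i)$\Leftrightarrow$(i') you correctly isolate the delicate spot; the paper's device there is to note that $\bar\beta:\Pi_0(G_{p-1})\to\Pi_0(F_p)$ is $\Pi_1(X_{p-1})$-equivariant (relation~(2)) and that the action on the source is transitive, so that property~(T3) upgrades ``trivial kernel'' to ``monomorphism''.

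The one genuine gap is your direction (iii)$\Rightarrow$(i). You propose a \emph{descending} induction on $p$, started ``from the top'' by $d$-boundedness. This does not get off the ground: boundedness gives $G_p\simeq *$ only for $p>d$, so the map $\bar\alpha:\Pi_*(G_d)\to\Pi_*(G_{d-1})$ is not controlled, and the complex \eqref{eq:augmented_hcpx0} carries no exactness condition at its rightmost term $\Pi_0(F_q)$ to feed into a top-down step (also, $\bar\alpha$ is eventually \emph{trivial}, not ``invertible''). The paper runs the induction \emph{ascending} in $p$, and the augmentation is precisely what launches it: $\Ker\tau=*$ forces $\bar\beta:\Pi_q(G_{-1})\to\Pi_q(F_0)$ to have trivial kernel, hence $\bar\alpha:\Pi_q(G_0)\to\Pi_q(G_{-1})$ is trivial by~$(2*)$; then exactness at $C^0$ reads $\Ker\bar\gamma=\Ker(\bar\beta'\circ\bar\gamma)$, so $\bar\beta'$ has trivial kernel on $\im\bar\gamma$, and the just-established surjectivity of $\bar\gamma$ promotes this to $\Ker\bar\beta'=*$, giving the next $\bar\alpha$ trivial; one then iterates. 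Replace your descending scheme by this ascending one and the rest of your proposal goes through.
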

In particular, when $d<+\infty$ and under the above equivalent conditions,
 the spectral sequence degenerates at $E^{**}_{d+1}$,
 one has $E_\infty^{0,q}=\pi_q(X)$ for $q>0$,
 and there is a finite number of non-necessarily trivial terms
 $E_\infty^{q,q}$ for $0\leq q \leq d$
 which describe the graded pieces of a finite filtration on $\Pi_0(X)$.
\begin{proof}
We heavily use notation from \Cref{df:two_exact_couples}. It can also be helpful for the reader
 to have the following ``unfolded'' diagram:
\begin{equation}
\begin{split}
\xymatrix@=10pt{
&&& \Pi_q(X_{-1}) & \Pi_q(X_{1})\ar^\alpha[ld] & \Pi_{q-1}(X_{0}) & \Pi_{q-1}(X_{2})\ar^\alpha[ld] & \\
& \Pi_{q+1}(X_{-1})\ar^\beta[rd] && \Pi_{q}(X_{0})\ar^\beta[rd]\ar^\alpha[u] && \Pi_{q-1}(X_{1})\ar^\beta[rd]\ar^\alpha[u] & \\
\Pi_q(X)\ar^\tau[rr] && \Pi_q(F_0)\ar@{==>}[rr]\ar^\gamma_\sim[ru]\ar_{\bar \gamma}[rd]
 && \Pi_{q-1}(F_1)\ar@{==>}[rr]\ar^\gamma[ru]\ar_{\bar \gamma}[rd] && \Pi_{q-2}(F_2) & \hdots \\
& \Pi_{q}(G_{-1})\ar_{\bar \beta}[ru]\ar^c_\sim[lu] && \Pi_{q-1}(G_{0})\ar_{\bar \beta}[ru]\ar_{\bar \alpha}[d]
 && \Pi_{q-2}(G_{1})\ar_{\bar \beta}[ru]\ar_{\bar \alpha}[d] & \\
&&& \Pi_{q-1}(G_{-1}) & \Pi_{q-1}(G_{1})\ar_{\bar \alpha}[lu] & \Pi_{q}(G_{0}) & \Pi_{q}(G_{2})\ar_{\bar \alpha}[lu] &
} \qquad \\
\qquad\qquad \xymatrix@=10pt{
& & \Pi_1(X_{q-2}) & \Pi_1(X_{q})\ar^\alpha[ld] & \Pi_{0}(X_{q-1}) & \Pi_{0}(X_{q+1})\ar^\alpha[ld] \\
& & \Pi_{1}(X_{q-1})\ar^\beta[rd]\ar^\alpha[u] && \Pi_{0}(X_{q})\ar^{\text{projection}}[rd]\ar^\alpha[u] & \\
\hdots & \Pi_1(F_{q-1})\ar@{==>}[rr]\ar^\gamma_\sim[ru]\ar_{\bar \gamma}[rd]
 && \Pi_0(F_q)\ar_\epsilon[rr]\ar^\gamma[ru] && \Pi_0(X_q)/\Pi_0(X_{q+1}) \\
& & \Pi_{0}(G_{q-1})\ar_{\bar \beta}[ru]\ar_{\bar \alpha}[d] &&&  \\
& & \Pi_{0}(G_{q-2}) & \Pi_{0}(G_{q})\ar_{\bar \alpha}[lu] & & &
}
\end{split}
\end{equation}
The dotted double arrows are the differentials of the unstable spectral sequence.
 Beware that the square part of this diagram is anti-commutative (see \eqref{eq:relation-differentials}).
 Finally, when one follows a path of two or more arrows in the diagram, one gets a part of
 a homotopy exact sequence.

With all these preparations, one can start our proof. 
The fact $(i')$ implies $(i)$ follows from $(2*)$. Reciprocally, $(i)$ implies that $\ker(\bar \beta)=*$,
 which implies the result for $q=0$. When $q>0$, we need to prove that $\bar \beta:\Pi_0(G_{p-1}) \rightarrow \Pi_0(F_p)$
 is a monomorphism, we apply property (T3) of \Cref{num:1-topos} as $\bar \beta$ is $\Pi_1(X_{p-1})$-equivariant
 according to (2), and the action of $\Pi_1(X_{p-1})$ on $\Pi_0(G_{p-1})$ is transitive
 because of $(2*)$ and the assumption (i).

The implication $(i) \Rightarrow (ii)$ follows by combining (3) and (6).
 To prove $(ii) \Rightarrow (i)$, we first remark that $(ii)$ implies that $b$ is an epimorphism
 according to $(3*)$. Thus we can conclude according to (6).

It remains to show that $(iii)$ is equivalent to all the other conditions.
 Let us first remark that, when $q=0$, $(iii)$ just means that $\tau$ has trivial kernel.
 This is equivalent to $(i')$ has $\bar \beta$ can be identified with $\tau$ thanks
 to the commutativity of \eqref{eq:co-augment_hcpx}.
 Thus, we assume $q>0$ for the remaining of the proof.

Let us first prove that all the other conditions imply $(iii)$.
 According to $(i')$, we know that $\bar \beta$ is a monomorphism.
 Thus the commutative diagram \eqref{eq:co-augment_hcpx} shows that $\tau$ is a monomorphism.
 According to $(2*)$, we know that $\im(\bar \beta)=\Ker(\bar \gamma)$.
 Applying again the fact that $\bar \beta$ is a monomorphism, one deduces that $\im(\tau)=\Ker(d_1^{0,q})$.
 Next, condition $(i)$ and $(2*)$ imply that $\bar \gamma$ is an epimorphism.
 Thus, one deduces $\im(d_1^{0,q})=\im(\bar \beta)\stackrel{(2*)}=\Ker(\bar \gamma)=\Ker(d_1^{1,q})$.
 Repeating this argument allows one to conclude by an obvious induction.
 
We finally prove that $(iii) \Rightarrow (i)$. 
 According to the commutativity of \eqref{eq:co-augment_hcpx},
 one deduces that $\bar \beta:\Pi_{q}(G_{-1}) \rightarrow \Pi_{q+1}(F_0)$ is a monomorphism.
 Thus $(2*)$ implies that $\bar \alpha:\Pi_*(G_{-1}) \rightarrow \Pi_*(G_0)$ is trivial.
 The latter triviality together with $(2*)$ implies that $\bar \gamma:\Pi_{*+1}(F_0) \rightarrow \Pi_{*}(G_0)$
 is an epimorphism. Now we apply the exactness of the unstable complex at $\Pi_q(F_0)$:
 one gets that $\Ker(\bar \gamma)=\im(\tau)=\Ker(d_1^{0,q})=\Ker(\bar \beta' \circ \bar \gamma)$.
 This implies that $\bar \beta':\Pi_*(G_0) \rightarrow \Pi_*(F_1)$ has trivial kernel when
 restricted to the image of $\bar \gamma$. So $\Ker(\bar \beta)=*$ and applying $(2*)$ again,
 one deduces that $\bar \alpha:\Pi_*(G_{0}) \rightarrow \Pi_*(G_1)$ is trivial.
 Iterating this reasoning, one concludes by induction on $p$.
\end{proof}

\begin{remark}
Point $(iii)$ in the case $q=0$ states that the canonical map $\tau:\Pi_0(X) \rightarrow \Pi_0(F_0)$ has trivial kernel.
 One should be careful that it is not a monomorphism in general.
 The defect of injectivity is measured by the filtration on $\Pi_0(X)$, and therefore by
 the non-triviality of the graded pieces $E_\infty^{q,q}$ when $q> 0$
 (computed in \Cref{lm:abstract-ssp-diag}).
\end{remark}

\begin{definition}\label{df:ssp-collapse}
When the equivalent conditions of the preceding theorem are satisfied,
 we will say that the spectral sequence \eqref{eq:unst-ssp-tower}
 essentially collapses at $E_2^{**}$ on the column $*=0$ and line $q$.
 If it collapses at all the lines, we just say that it collapses at $E_2^{**}$ on the column $*=0$.
\end{definition}

\begin{notation}\label{num:augmentation-ssp}
One can refine slightly the exactness part of the above \emph{collapsing} property.
 Consider again the assumption of the preceding theorem, that the spectral sequence $E_1^{**}$
 essentially collapses and that $d<+\infty$.

Recall from \Cref{num:unstable_ssp} that the coaugmented homotopical complex
 \eqref{eq:augmented_hcpx0} also admits an augmentation $\epsilon$:
\begin{equation}
\label{eq:augmented_hcpx2}
* \rightarrow \Pi_q(X)\xrightarrow{\tau} \Pi_q(F_0)\xrightarrow{d_1^{0,q}} \Pi_{q-1}(F_1)
 \rightarrow \cdots \rightarrow \Pi_1(F_{q-1}) \xRightarrow{d_1^{q-1,q}} \Pi_0(F_q)
 \xrightarrow{\epsilon} \Pi_0(X_q)/\Pi_0(X_{q+1}).
\end{equation}
where the target of $\epsilon$ is the cokernel of the map $\alpha$.
\end{notation}
\begin{proposition}\label{prop:q-large}
Consider the above notation and assumptions.
 Then for any $q \geq d-1$ the biaugmented homotopical complex \eqref{eq:augmented_hcpx2}
 is exact in the sense of \Cref{df:coaugmented_cohtp_cpx} (up to reindexing).

Moreover, for $q=1$ and $d \leq 2$, the corresponding biaugmented $1$-truncated homotopical complex 
 is strongly exact:
\begin{equation}
\label{eq:augmented_hcpx3}
* \rightarrow \Pi_1(X)\xrightarrow{\tau} \Pi_1(F_0)
 \xRightarrow{d_1^{0,1}} \Pi_0(F_1) \xrightarrow{\epsilon} \Pi_0(X_1)/\Pi_0(X_2).
\end{equation}
\end{proposition}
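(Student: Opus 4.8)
The plan is to bootstrap from the exactness of the co-augmented complex \eqref{eq:augmented_hcpx0}, which is in force by hypothesis since it is precisely condition (iii) of \Cref{thm:degeneracy}; the biaugmented complex \eqref{eq:augmented_hcpx2} differs from it only through the extra co-augmentation $\epsilon:\Pi_0(F_q)\to\Pi_0(X_q)/\Pi_0(X_{q+1})$. The first point I would establish is that the homotopy object of \eqref{eq:augmented_hcpx2} at this new place is exactly $E_2^{q,q}$: one has $\Ker(\epsilon)=\gamma^{-1}\big(\im(\Pi_0(X_{q+1})\xrightarrow{\alpha}\Pi_0(X_q))\big)$, the acting group $E_1=\Pi_1(F_{q-1})$ operates on $\Pi_0(F_q)$ through the monodromy of the fibre sequence $F_q\to X_q\to X_{q-1}$, its orbits coinciding with those of $\Ker\big(\Pi_1(X_{q-1})\xrightarrow{\alpha}\Pi_1(X_{q-2})\big)$, and comparison with the formulas of \Cref{rem:compute_E_r_BK} then identifies $\Ker(\epsilon)/E_1$ with $E_2^{q,q}$. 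Under the collapse hypothesis one moreover has $E_2^{q,q}=E_\infty^{q,q}$: the differentials leaving $E_n^{q,q}$ land in a zero object for degree reasons, while every incoming differential $d_n$ with $n\geq2$ either has already-trivial source by the interior collapse or is the differential $d_{q-1}^{0,1}$ leaving the column $*=0$, which vanishes because the spectral sequence collapses there. Hence \eqref{eq:augmented_hcpx2} is exact as soon as \eqref{eq:augmented_hcpx0} is exact and $E_2^{q,q}=*$, and the first assertion is reduced to proving $E_2^{q,q}=*$ for all $q\geq d-1$.

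For this vanishing I would argue by cases. If $q>d$ then $\Pi_0(F_q)=E_1^{q,q}=*$ by $d$-boundedness (\Cref{lm:abstract-ssp-diag}), so $E_2^{q,q}=*$. If $0<q\leq d$ — which, when $d\geq2$, covers both $q=d$ and $q=d-1$ — then $(q,1)\in I$ precisely because $0<q\leq d$, and condition (ii) of \Cref{thm:degeneracy} applied to this pair says that $c:\Pi_0(G_{q-1})\to\Pi_0(X)$ is trivial; by the fibre sequence $G_{q-1}\to X\xrightarrow{\pi_{q-1}}X_{q-1}$ this forces $\pi_{q-1,*}:\Pi_0(X)\to\Pi_0(X_{q-1})$ to have trivial kernel. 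Since $\pi_{q-1}=f_q\circ\pi_q$ whereas $f_{q,*}\circ\gamma=*$ (fibre sequence $F_q\xrightarrow{i_q}X_q\xrightarrow{f_q}X_{q-1}$), a short diagram chase gives $\gamma^{-1}\big(\im(\Pi_0(X)\xrightarrow{a}\Pi_0(X_q))\big)=\Ker(\gamma)$. Now \Cref{lm:abstract-ssp-diag} together with $E_2^{q,q}=E_\infty^{q,q}$ gives $E_2^{q,q}=\gamma^{-1}(\im a)/\Pi_1(X_{q-1})=\Ker(\gamma)/\Pi_1(X_{q-1})$, which is $*$ because $\Pi_1(X_{q-1})$ acts transitively on $\Ker(\gamma)$ (exactness of the long homotopy sequence of $F_q\to X_q\to X_{q-1}$). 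The degenerate case $q=0$, possible only when $d\leq1$, is analogous and easier, the target of $\epsilon$ then being the quotient of $\Pi_0(F_0)=\Pi_0(X_0)$ by $\im(\tau)$.

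For the strong exactness of the $1$-truncated complex \eqref{eq:augmented_hcpx3} (the case $q=1$, $d\leq2$), exactness is the instance $q=1$ of what precedes, so it remains only to check that $\tilde\epsilon:\Pi_0(F_1)/\Pi_1(F_0)\to\Pi_0(X_1)/\Pi_0(X_2)$ is a monomorphism of pointed objects. As $\Pi_1(F_0)=\Pi_1(X_0)$ acts on $\Pi_0(F_1)$ as the monodromy of the fibre sequence $F_1\xrightarrow{i_1}X_1\to X_0$, the last clause of \Cref{df:long_htp_seq} already yields that the induced map $\overline{\gamma}:\Pi_0(F_1)/\Pi_1(X_0)\to\Pi_0(X_1)$ is a monomorphism, and $\tilde\epsilon$ is $\overline{\gamma}$ followed by the quotient $\Pi_0(X_1)\to\Pi_0(X_1)/\Pi_0(X_2)$. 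This composite is again a monomorphism because the image of $\overline{\gamma}$ meets $\im(\Pi_0(X_2)\xrightarrow{\alpha}\Pi_0(X_1))$ only at the base point — which is exactly a restatement of the vanishing $E_2^{1,1}=*$ obtained above.

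The step I expect to be the main obstacle is the opening one: being confident that the homotopy object of \eqref{eq:augmented_hcpx2} at the new spot really is $E_2^{q,q}$, and that the collapse forces $E_2^{q,q}=E_\infty^{q,q}$. This demands carefully matching, through the Rees system of \Cref{df:two_exact_couples}, the two a priori different group actions on $\Pi_0(F_q)$ that occur — the monodromy action of $\Pi_1(X_{q-1})$ and the ``differential'' action of $\Pi_1(F_{q-1})$ — together with the degree bookkeeping that kills the differentials touching the diagonal and the tracking of which of the equivalent conditions (i)/(ii) of \Cref{thm:degeneracy} is invoked for which pair $(p,i)$, in particular in the boundary cases $q=d$ and $q=d-1$.
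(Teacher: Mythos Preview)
The paper leaves this proof to the reader, so there is no argument to compare against; I will assess your proposal on its own.

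Your reduction is sound: exactness of \eqref{eq:augmented_hcpx2} beyond that of \eqref{eq:augmented_hcpx0} amounts precisely to $\pi_0$ of the co-augmented complex being trivial, and you correctly identify this with $E_2^{q,q}=*$ (the action of $\Pi_1(F_{q-1})$ on $\Pi_0(F_q)$ factors through $\gamma$, whose image is $\Ker(\alpha)\subset\Pi_1(X_{q-1})$, so the orbit space matches the Bousfield--Kan formula in \Cref{rem:compute_E_r_BK}). Your computation that $E_\infty^{q,q}=*$ for $0<q\leq d$ via \Cref{lm:abstract-ssp-diag} and condition (ii) of \Cref{thm:degeneracy} is also correct, as is the strong-exactness argument for $q=1$, $d\leq 2$.

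The gap --- which you rightly flag as the delicate point --- is the assertion $E_2^{q,q}=E_\infty^{q,q}$. Your justification treats the passage $E_n^{q,q}\to E_{n+1}^{q,q}$ as governed by ordinary differentials, and says the outgoing one ``lands in a zero object for degree reasons''. But in this \emph{unstable} spectral sequence the outgoing map from the diagonal is the augmentation $\epsilon_n:E_n^{q,q}\to\tilde D_n^{q,q}$ of \Cref{num:unstable_ssp}, not a differential into a vanishing $E$-term, and $\tilde D_n^{q,q}$ is not zero for degree reasons. Concretely, from \Cref{rem:compute_E_r_BK} the numerator of $E_{r+1}^{q,q}$ is $\gamma^{-1}(\im(\alpha^r))$, which genuinely shrinks as $r$ grows, while the acting group $\Ker(\alpha^r)$ grows; so $E_\infty^{q,q}=*$ does not by itself force $E_2^{q,q}=*$. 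This is not merely cosmetic for $q=d-1$: there $X_{q+1}=X_d$ is not yet isomorphic to $X$, so $\im(\alpha:\Pi_0(X_d)\to\Pi_0(X_{d-1}))$ need not equal $\im(a:\Pi_0(X)\to\Pi_0(X_{d-1}))$, and your diagram chase identifying $\gamma^{-1}(\im a)$ with $\Ker(\gamma)$ no longer applies to $\Ker(\epsilon)$. A direct argument for $E_2^{q,q}=*$ should instead exploit the \emph{left} exact couple: under the collapse (condition (i)/(i') of \Cref{thm:degeneracy}, now also on line $q+1$ when $q\leq d-1$) one has $\bar\gamma$ epi and $\bar\beta$ mono in the relevant degrees, and the identity $\gamma\circ\bar\beta=a\circ c$ together with $\im(\gamma)=\Ker(\alpha)$ lets one compare $\Ker(\epsilon)$ with $\im(\bar\beta)=\im(d_1^{q-1,q})$ directly, bypassing the $E_\infty$ detour.
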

Left to the reader.

\begin{remark}
The exactness of \eqref{eq:augmented_hcpx3} can be surprising at first sight.
 Let us first mention that it appears crucially in Morel's study of the $\AA^1$-fundamental sheaf:
 see in particular \cite[Def. 2.20]{MorelLNM}.
 
For further links with Morel's theory, we refer the reader to \Cref{ex:htp-CM}
 and \Cref{rem:htp-CM}.
 We will also highlight examples in terms of torsors under algebraic groups for which the preceding proposition
 is crucial: see \Cref{cor:alg-gp-CM}. 
\end{remark}

If we drop the equivalence of the three conditions in \Cref{thm:degeneracy},
 we can give a refined version that will be useful in some of our examples
 (in particular in the relative case of Gersten conjecture).
\begin{proposition}\label{prop:truncated-degeneracy}
We consider the notation of \Cref{df:two_exact_couples}.
 We assume in addition that the tower $X/X_\bullet$ under $X$ is $d$-bounded
 for an integer $d>0$ and consider two integers $0 \leq e \leq d$,
 and $q \geq 0$. We assume the following condition:
\begin{enumerate}
\item[(i)] $\forall p \in  [e,d]$ such that $p\leq q$,
 both pointed maps
$$
\Pi_{q-p}(G_p) \xrightarrow{\bar \alpha^{p,q}} \Pi_{q-p}(G_{p-1})
 \xrightarrow{\bar \alpha^{p-1,q-1}} \Pi_{q-p}(G_{p-1})
$$
are trivial.
\end{enumerate}
Then the $(q-e)$-truncated cohomotopical complex
$$
\Pi_{q-e}(F_e)\xrightarrow{d_1^{e,q}} \Pi_{q-e-1}(F_{e+1})
 \rightarrow \cdots \rightarrow \Pi_1(F_{q-1}) \xRightarrow{d_1^{q-1,q}} \Pi_0(F_q)
$$
is exact in the sense of \Cref{df:coaugmented_cohtp_cpx}.
\end{proposition}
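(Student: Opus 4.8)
The plan is to transcribe the kernel-versus-image argument used for the implication ``(i) $\Rightarrow$ (iii)'' in the proof of \Cref{thm:degeneracy}, localized to the window of indices $p \in [e,\min(d,q)]$. The essential simplification over \Cref{thm:degeneracy} is that here condition (i) is postulated outright for all relevant $p$, so no induction is needed: one verifies exactness spot by spot. I keep the notation of \Cref{df:two_exact_couples}; in particular, for each $p$ the octahedron \eqref{eq:octaedron} yields a fiber sequence $G_p \to G_{p-1} \to F_p$, hence, after applying $\Pi_*$, the long homotopy exact sequence
$$
(2*)_p\colon\qquad \cdots \to \Pi_n(G_p) \xrightarrow{\bar\alpha} \Pi_n(G_{p-1}) \xrightarrow{\bar\beta} \Pi_n(F_p) \xrightarrow{\bar\gamma} \Pi_{n-1}(G_p) \xrightarrow{\bar\alpha} \Pi_{n-1}(G_{p-1}) \to \cdots,
$$
and the differential of the complex in the statement is $d_1^{p,q} = \pm\,\bar\beta\circ\bar\gamma$, where $\bar\gamma\colon\Pi_{q-p}(F_p)\to\Pi_{q-p-1}(G_p)$ is taken from $(2*)_p$ and $\bar\beta\colon\Pi_{q-p-1}(G_p)\to\Pi_{q-p-1}(F_{p+1})$ from $(2*)_{p+1}$ (this identification resting on the octahedron relations, cf.\ \eqref{eq:relation-differentials}).

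First I would record the two consequences of hypothesis (i). For $p\in[e,d]$ with $p\le q$: triviality of $\bar\alpha\colon\Pi_{q-p}(G_p)\to\Pi_{q-p}(G_{p-1})$ forces, via exactness of $(2*)_p$, that $\bar\beta\colon\Pi_{q-p}(G_{p-1})\to\Pi_{q-p}(F_p)$ has trivial kernel; triviality of $\bar\alpha\colon\Pi_{q-p}(G_{p-1})\to\Pi_{q-p}(G_{p-2})$ forces, via exactness of $(2*)_{p-1}$, that $\bar\gamma\colon\Pi_{q-p+1}(F_{p-1})\to\Pi_{q-p}(G_{p-1})$ is an epimorphism. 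Now take an interior spot $\Pi_{q-p}(F_p)$ of the complex, $e<p<q$. One has $\Ker(d_1^{p,q})=\bar\gamma^{-1}\bigl(\Ker(\bar\beta\colon\Pi_{q-p-1}(G_p)\to\Pi_{q-p-1}(F_{p+1}))\bigr)$, and by $(2*)_{p+1}$ this last kernel is $\im\bigl(\bar\alpha\colon\Pi_{q-p-1}(G_{p+1})\to\Pi_{q-p-1}(G_p)\bigr)$, trivial by (i) at the index $p+1$; hence $\Ker(d_1^{p,q})=\Ker(\bar\gamma)=\im\bigl(\bar\beta\colon\Pi_{q-p}(G_{p-1})\to\Pi_{q-p}(F_p)\bigr)$ by $(2*)_p$. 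On the other side $\im(d_1^{p-1,q})=\bar\beta\bigl(\im(\bar\gamma\colon\Pi_{q-p+1}(F_{p-1})\to\Pi_{q-p}(G_{p-1}))\bigr)=\bar\beta\bigl(\Pi_{q-p}(G_{p-1})\bigr)$, using the epimorphism above at the index $p$; so $\im(d_1^{p-1,q})=\im(\bar\beta)=\Ker(d_1^{p,q})$, which is the asserted exactness at that spot. Each index of (i) invoked here ($p$ and $p+1$) lies in $[e,\min(d,q)]$ once $e<p<q\le d$; the case $q>d$ reduces to this because $d$-boundedness makes $G_p\simeq *$ (hence the relevant $\bar\alpha$'s trivially vanish) for $p>d$, and $q<e$ is vacuous. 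At the top index $p=e$ the same computation gives $\Ker(d_1^{e,q})=\im\bigl(\bar\beta\colon\Pi_{q-e}(G_{e-1})\to\Pi_{q-e}(F_e)\bigr)$, i.e.\ the image of the incoming differential $d_1^{e-1,q}$ of the ambient complex \eqref{eq:augmented_hcpx0}, so nothing more is needed; and the bottom term $\Pi_0(F_q)$ carries no exactness condition in the absence of a co-augmentation.

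The point requiring care — precisely as in \Cref{thm:degeneracy} — is the low-degree, non-abelian part: when $q-p\in\{0,1\}$ the objects $\Pi_{q-p}(F_p)$, $\Pi_{q-p}(G_{p-1})$ are only pointed sets or (possibly non-abelian) groups, several of the maps in $(2*)$ and in the complex are group actions rather than morphisms, and ``exact'' in \Cref{df:coaugmented_cohtp_cpx} must be read in the orbit-space sense at degree $0$. The manipulations above still go through, since $(2*)$ is exact as a sequence of pointed sets/groups and the kernels appearing are preimages of base points (resp.\ stabilizers), but one has to invoke properties (T1)--(T3) of \Cref{num:1-topos} and the $\Pi_1$-equivariance relations coming from \Cref{prop:unstable_triangulated}(Fib4) (the relations $(2)$, $(5)$ there, transported as in \Cref{df:two_exact_couples}) to legitimise the identification $d_1^{p,q}=\pm\,\bar\beta\circ\bar\gamma$ and the kernel computations at those degrees. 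I expect this bookkeeping, together with keeping straight which instance of (i) feeds which spot, to be the bulk of the work; the rest is a direct specialisation of the proof of \Cref{thm:degeneracy}.
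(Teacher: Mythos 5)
Your proposal is correct and follows essentially the same route as the paper: the paper's own proof is a one-line reduction to the argument of \Cref{thm:degeneracy}, observing that hypothesis (i) forces $\bar\gamma^{p-1,q}$ to be an epimorphism and $\bar\beta^{p-1,q-1}$ to have trivial kernel (upgraded to a monomorphism at $p=q$ via the $\Pi_1$-equivariance/transitivity argument, i.e.\ property (T3)) for $e\le p\le\min(d,q)$, and then concludes spot by spot exactly as you do. Your explicit tracking of which instance of (i) (at indices $p$ and $p+1$) feeds each exactness spot, and of the low-degree non-abelian care, is a faithful expansion of that same argument.
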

\begin{proof}
The proof works as the one of \Cref{thm:degeneracy}: condition (i) implies that,
 for $e \leq p \leq d$ and $p \leq q$,
 $\bar \gamma^{p-1,q}$ is an epimorphism and $\bar \beta^{p-1,q-1}$ is a monomorphism
 (beware that for $p=q$, one needs the argument at the beginning of the proof
 of \emph{loc. cit.} to deduce from $\Ker(\bar \beta^{p-1,p-1})=*$ to $\bar \beta^{p-1,p-1}$
 is a monomorphism). These two facts allow to conclude, as in \emph{loc. cit.}
\end{proof}

\section{Unstable coniveau spectral sequence}

\subsection{Unstable coniveau exact couple}

\begin{notation}\label{num:coniveau_ssp}
Let $X$ a scheme, with topology $t=\zar, \nis$.
 Let $\iSh(X_t)$ be the associated $\infty$-topos,
 and consider an additive cohomotopy functor (\Cref{df:htp_functor}) on $\iSh(X_t)$ with values in an arbitrary topos $\E$:
$$
\Pi_*:\iSh(X_t)_*^{op} \rightarrow \E_{\pi_*},
$$
as in \Cref{df:htp_functor}. We will also consider $\C=\pro-\iSh(X_t)$
 the $\infty$-category of pro-objects,
 and let $\Pi_*:\C_*^{op} \rightarrow \E_*$ be the homotopy functor
 obtained by Kan extension, according to the formula:
$$
\Pi_*(\pplim{i \in I} F_i)=\dlim_{i \in I} \Pi_*(F_i).
$$
The filtered colimit is taken in the category of $\pi_*$-structures in $\E$
 (see \Cref{rem:pi*-(co)limits}).
 The fact that this formula does provide a homotopy functor follows
 from \Cref{prop:colimits-htp-seq}.

Recall that a flag on $X$ is a decreasing sequence $Z^*=(Z^p)_{p \in \N}$ of closed subschemes of $X$
 such that $\codim_X(Z^p) \geq p$.\footnote{Note that $Z^p=\varnothing$ if $p>\dim(X)$.}
 We let $\flag(X)$ be the set of flags of $X$, ordered by inclusion. This is a filtered ordered set.
 For a non-negative integer $p$, we will consider the pro-object of $X_t$:
$$
X^{\leq p}:=\pplim{Z^* \in \flag(X)} (X-Z^{p+1}).
$$
The notation reflects the fact that the limit of this pro-object in the underlying category of sets
 is equal to the set of points of codimension less or equal to $p$ in $X$.

One deduces the so-called \emph{coniveau tower of $X$}, which is an \emph{increasing} tower\footnote{In other words,
 with the notation of \Cref{ex:unst_exact_couples}, this is a functor $X^{\leq \bullet}:\overrightarrow{\N} \rightarrow \pro-X_t/X$.}
 over the object $X$ in the category $\pro-X_t$ of pro-objects in $X_t$:
$$
\xymatrix@C=10pt@R=2pt{
\cdots & X^{\leq p}\ar[l]\ar_{\pi^p}[rd] && X^{\leq p-1}\ar_{f^p}[ll]\ar[ld] & \cdots\ar[l] \\
&& X &&
}
$$
If $X$ is of finite dimension $d$, this tower is bounded and $X^{\leq n}=X$ for $n\geq d$.

By considering the Yoneda embedding and adding a base point,
 we will view this as an increasing tower $X_+^{\leq *}$ over $X_+$ in the category $\C_*$.
 Equivalently, this is a tower over $X_+$ in $\C^{op}_*$ in the sense of \Cref{num:tower_under}
 (in particular, decreasing). One can therefore apply the constructions of \Cref{df:two_exact_couples}
 to this latter tower.

We do it explicitly for future references and for the comfort of the reader.
 One starts with a dual octahedron diagram in $\C_*$, dual to \eqref{eq:octaedron}:
\begin{equation}\label{eq:octaedron-coniv}
\xymatrix@=10pt{
& X^{\leq p}_+\ar_{\pi^p_+}[ld]\ar[rd] && X^{\leq p-1}_+\ar_{f^p_+}[ll]\ar[rd] & \\
X_+\ar@{}|/2pt/{(*)}[r]\ar[rd]
 & & X^{=p}\ar@{-->}[ru]\ar[rd]\ar@{}|/3pt/{(*)}[u]\ar@{}|/3pt/{(*)}[d]
 & & X_+\ar@{}|/2pt/{(*)}[l]\ar[ld] \\
& X^{>p}\ar@{-->}[uu]\ar@{-->}[ru] && X^{>p+1}\ar[ll]\ar@{-->}[uu] &
}
\end{equation}
where $X^{=p}$ and $X^{>p}$ are respectively the cokernel of $f^p_+$ and $\pi_{p+}$.
 Then one applies the contravariant functor $\Pi_*$ and gets a diagram for $p\leq q$
 satisfying the same properties as \eqref{eq:doucle-ec}:
\begin{equation}\label{eq:double-ec-coniv}
\xymatrix@C=10pt@R=14pt{
& \Pi_{q-p}(X^{\leq p}_+)\ar^{\alpha}[rr]\ar@{=>}^b[dd]
 && \Pi_{q-p}(X^{\leq p-1}_+)\ar@{=>}[dd]\ar@{=>}^{\beta}[ld] & \\
\Pi_{q-p}(X_+)\ar^a[ru]\ar@{}|/8pt/{(*)}[r]
 & & \Pi_{q-p}(X^{=p})\ar@{=>}_{\bar \gamma}[ld]\ar^{\gamma}[lu]\ar@{}|/2pt/{(*)}[u]\ar@{}|/2pt/{(*)}[d]
 & & \Pi_{q-p}(X_+)\ar@{}|/8pt/{(*)}[l]\ar[lu] \\
& \Pi_{q-p}(X^{>p})\ar_{\bar \alpha}[rr]\ar^c[lu]
 && \Pi_{q-p}(X^{>p-1})\ar[ru]\ar_{\bar \beta}[lu] &
}
\end{equation}
Therefore, according to \emph{loc. cit.}, one gets an unstable spectral sequence in $\E$:
\begin{equation}\label{eq:coniv_ssp}
E_{1,c}^{p,q}(X,\Pi_*):=\Pi_{q-p}(X^{=p}) \Rightarrow \Pi_{q-p}(X_+)
\end{equation}
which is associated with both exact couples
 $(E_{1,c}^{**},D_1^{p,q}=\Pi_{q-p}(X^{\leq p}_+),\alpha,\beta,\gamma)$ and $(E_{1,c}^{**},\bar D_1^{p,q}=\Pi_{q-p}(X^{>p-1}),\bar \alpha,\bar \beta,\bar \gamma)$
 respectively of type I and II.
\end{notation}
\begin{definition}\label{df:coniv-ssp-E-coef}
With the above notation,
 we will call \eqref{eq:coniv_ssp} the unstable coniveau spectral sequence of $X$ with coefficients in $\Pi_*$. 
\end{definition}
The spectral sequence is strongly convergent whenever the scheme $X$ is of finite dimension.

\begin{example}\label{ex:rep_cohtp_functor}
Consider a simplicial sheaf $\cX$ on $X_t$.
 It represents an additive cohomotopy functor with values in the punctual topos $\Set$:
$$
\Pi^\cX_*:=\Pi_*(-,\cX):\iSh(\Sm_k)_*^{op} \rightarrow \Set_{\pi_*}, \cY \mapsto \Pi_*(\cY,\cX):=\pi_*\big(\Map_*(\cY,\cX)\big).
$$
The unstable coniveau spectral sequence associated with the latter cohomotopy functor
 and any scheme $X$ has the form:
$$
E^{p,q}_{1,c}(X,\cX):=\pi_{q-p}\big(\Map(X^{=p},\cX)\big)
 =\dlim_{Z^* \in \flag(X)} [S^{q-p}\wedge (X-Z^{p+1}/X-Z^p),\cX]
 \Rightarrow [S^{q-p}\wedge X_+,\cX]
$$
where $[-,-]$ means homotopy classes of pointed $t$-sheaves on $\Sm_k$.\footnote{Beware
 that the above inductive limit must be taken in the appropriate category.}
\end{example}

\begin{notation}\label{num:cohtp_support}\textit{Cohomotopy theory with supports}.
To obtain the preceding spectral sequence, we can weaken the needed theory.

Let $\Sch$ be a category of schemes whose objects are stable under \'etale extensions.
 A closed $\Sch$-pair is a pair $(X,Z)$ such that $X$ is a scheme in $\Sch$
 and $Z$ is a closed subscheme of $X$.

A \emph{cohomotopy theory with supports} $\Pi_*$ on $\Sch$ and coefficients in $\E$\footnote{If
 the coefficients are not indicated, it is intended that $\E=\Set$.}
 is the data for any closed $\Sch$-pair $(X,Z)$ of a $\pi_*$-structure $\Pi_*(X,Z)$
 in $\E$ (\Cref{df:pi*-structure}), and of the following additional structures:
\begin{itemize}
\item \emph{contravariance}: for $f:Y \rightarrow X$ in $\Sch$,
 a pullback map $f^*:\Pi_n(X,Z) \mapsto \Pi_n(Y,f^{-1}(Z))$,
 morphism of groups for $n>0$, of pointed sets for $n=0$
\item \emph{covariance}: for a closed immersion $i:Z \rightarrow T$,
 $X$ in $\Sch$, there are
 $i_*:\Pi_n(X,Z) \mapsto \Pi_n(X,T)$ morphism of groups for $n>0$, of pointed sets for $n=0$ and

\item \emph{long exact sequence}: for a closed immersion $i:Z \rightarrow T$,
 with open complement $j:X-Z \rightarrow X$,
 there exists an exact long homotopy sequence (\Cref{df:long_htp_seq}):
\begin{equation}\label{eq:htp-long-ex}
\cdots
\Rightarrow\Pi_n(X,Z) \xrightarrow{i_*} \Pi_n(X,T) \xrightarrow{j^*} \Pi_n(X-Z,T-Z) 
 \xRightarrow{\partial_{X,Z}} \Pi_{n-1}(X,Z) \to \cdots
\end{equation}
\end{itemize}
Let again $t=\zar, \nis$. Then we say that $\Pi_*$ is $t$-local
 if the following properties hold:
\begin{itemize}
\item \emph{First additivity}: given closed $\Sch$-pairs $(X,Z)$ and $(Y,T)$,
 the canonical map: \\ $\Pi_*(X\sqcup Y,Z\sqcup T)\rightarrow \Pi_*(X,Z) \times \Pi_*(Y,T)$
 is an isomorphism.
\item \emph{t-excision}: for any closed $\Sch$-pair $(X,Z)$,
 any map $f:V \rightarrow X$ which is an open immersion in case $t=\zar$ and an \'etale map if $t=\nis$,
 and such that $T=f^{-1}(Z) \rightarrow Z$ is an isomorphism,
 the induced map $f^*: \Pi_n(X,Z) \rightarrow \Pi_n(V,T)$ is an isomorphism.\footnote{Recall that when $t=\nis$,
 we say that $f:(V,T) \rightarrow (X,Z)$ is \emph{excisive}.}
\item \emph{Second additivity}: Given a scheme $X$ and disjoint closed subschemes $Z$, $T$ of $X$,
 the canonical map $\Pi_*(X,Z \sqcup T) \rightarrow \Pi_*(X-T,Z) \times \Pi_*(X-Z,T) \simeq \Pi_*(X,Z) \times \Pi_*(X,T)$,
 where the second map is defined using Zariski excision, is an isomorphism.
\end{itemize}
The reader can check that, to define the preceding unstable coniveau spectral sequence,
 we only need the axioms of a cohomotopy theory with support.
 The fact that it is $t$-local will come into consideration solely for computations.
\end{notation}

\begin{remark}\label{rem:cohtp_support}
On the one hand, it is useful to have the preceding down-to-earth axiomatic.
 On the other hand, it is always satisfied in the abstract context of \Cref{num:coniveau_ssp}.
 Indeed, given an additive cohomotopy functor $\Pi_*:\iSh(X_t) \rightarrow \E_{\pi_*}$,
 one puts:
\begin{equation}\label{eq:cohtp_support}
\Pi_n(X,Z)=\Pi_n^Z(X):=\Pi_n(X/X-Z)
\end{equation}
where $X/X-Z$ is the (homotopy) cofiber of the open immersion $j:(X-Z) \rightarrow X$
 computed in $\iSh(X_t)$. This defines a cohomotopy theory with supports
 on $X_t$ with coefficients in $\E$, as follows from \Cref{prop:unstable_triangulated} (applied for cofiber sequences,
 see \Cref{rem:fibers&cofibers&triangulated}(1)).
 Besides it is $t$-local as defined above.\footnote{Use the isomorphism
 $X/(X-{Z \sqcup T}) \simeq X/(X-Z) \wedge X/(X-T)$ in $\iSh(X_t)$ for the second additivity.}
\end{remark}

\begin{notation}\label{num:restricted-product}
Let $I$ be a set. In any category $\C$ with finite products and filtered colimits,
 one can define the \emph{restricted product}
 of a family $(X_i)_{i \in I}$ of objects of $\C$ by the following formula:
\begin{equation}\label{eq:rest-product}
{\prod}'_{i \in I} X_i=\colim_{S \in \mathcal P^f(I)} \left(\prod_{i \in S} X_i\right)
\end{equation}
where the set $\@P^f(I)$ consists of $S\subset I$ such that $S$ is a finite set and $\@P^f(I)$ is ordered by inclusion. Note that in an additive category with filtered colimits,
 the restricted products coincide with coproducts.
 Note also that if we consider a family of pointed objects,
 then their restricted product is canonically pointed.

With this notation, we can state the following computation which
 gives the general shape of unstable coniveau spectral sequences.
\end{notation}
\begin{proposition}\label{prop:coniveau_ssp_refined}
Let $\Pi_*$ be a $t$-local cohomotopy theory with support with coefficients
 in an arbitrary topos $\E$, as defined previously.
 Then for any couple of integers $(p,q)$, there exists a canonical isomorphism:
$$
E_{1,c}^{p,q}(X,\Pi_*) \simeq 
\begin{cases}
\bigoplus_{x \in X^{(p)}} \Pi_{q-p}^x(X_{(x)}^t) & p \leq q-2, \\
\prod'_{x \in X^{(q-1)}} \Pi_1^x(X_{(x)}^t) & p=q-1, \\
\prod'_{x \in X^{(q)}} \Pi_0^x(X_{(x)}^t) & p=q,
\end{cases}
$$
where $\prod'$ denotes the restricted product (in the appropriate category), 
 $X^{(p)}$ is the set of codimension $p$ points,
 $X^t_{(x)}$ denotes the pro-scheme of $t$-neighborhoods of $x$ in $X$,
 and we have used notation \eqref{eq:cohtp_support}
 for $\Pi_*^x(X_{(x)}^t)$, cohomotopy  of $X_{(x)}^t$ with support in the closed point $x$.
\end{proposition}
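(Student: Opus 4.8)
The plan is to unwind $E_{1,c}^{p,q}(X,\Pi_*)$ into a filtered colimit of relative cohomotopy groups with supports and then reorganize it, by a cofinality argument in the spirit of \cite{CHK}, into a colimit over the finite subsets of $X^{(p)}$ of finite products of local cohomotopy groups --- which by \eqref{eq:rest-product} is exactly a restricted product. The argument is internal to $\E$; since filtered colimits in $\E_{\pi_*}$ are computed termwise (\Cref{rem:pi*-(co)limits}), it is notationally harmless to take $\E=\Set$, as in the proof of \Cref{prop:colimits-htp-seq}. By \Cref{num:coniveau_ssp}, $E_{1,c}^{p,q}(X,\Pi_*)=\Pi_{q-p}(X^{=p})$ with $X^{=p}$ the cofiber of $f^p_+\colon X^{\leq p-1}_+\to X^{\leq p}_+$ and $X^{\leq p}=\pplim{Z^*\in\flag(X)}(X-Z^{p+1})$; since the cofiber of a level map of pro-objects is computed termwise, $X^{=p}=\pplim{Z^*}\big((X-Z^{p+1})/(X-Z^p)\big)$, so applying the Kan-extended $\Pi_*$ and notation \eqref{eq:cohtp_support},
$$
E_{1,c}^{p,q}(X,\Pi_*)\;\simeq\;\dlim_{Z^*\in\flag(X)}\Pi_{q-p}\big(X-Z^{p+1},\,Z^p-Z^{p+1}\big),
$$
a colimit that I would reindex over the cofinal filtered poset of \emph{admissible pairs} $(Z,W)$ of closed subsets of $X$ with $W\subseteq Z$, $\codim_X(Z)\geq p$ and $\codim_X(W)\geq p+1$ (any such pair extends to a flag), the transition map for $(Z,W)\leq(Z',W')$ being restriction along $X-W'\hookrightarrow X-W$ followed by covariance along $Z-W'\hookrightarrow Z'-W'$.

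The main step is a second cofinality reduction. Call $(Z,W)$ \emph{separated} if $Z\setminus W$ is the disjoint union $\coprod_x T_x$ of the locally closed subschemes $T_x=\overline{\{x\}}\setminus W$, where $x$ runs over the codimension-$p$ points of $X$ lying in $Z\setminus W$. Separated pairs are cofinal: given $(Z,W)$, adjoin to $W$ all irreducible components of $Z$ of codimension $>p$ and all intersections $\overline{\{x\}}\cap\overline{\{y\}}$ of closures of distinct codimension-$p$ points of $Z\setminus W$; each such set is closed of codimension $\geq p+1$ (two distinct irreducible codimension-$p$ subsets meet in a proper, hence strictly smaller-dimensional, closed set), and the new pair is separated and dominates $(Z,W)$. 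Moreover every finite $S\subseteq X^{(p)}$ is realized, via $(Z_S,W_S)=\big(\bigcup_{x\in S}\overline{\{x\}},\,\bigcup_{x\neq y\in S}\overline{\{x\}}\cap\overline{\{y\}}\big)$, which is minimal among separated pairs with that set of generic points. For a separated pair, iterating \emph{second additivity} gives a finite product decomposition $\Pi_{q-p}(X-W,Z-W)\simeq\prod_x\Pi_{q-p}(X-W,T_x)$.

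It remains to identify the factors and assemble. By \emph{$t$-excision}, $\Pi_{q-p}(X-W,T_x)$ depends only on a $t$-neighborhood of $T_x$; taking the colimit over the filtered family of separated pairs having a given $x\in X^{(p)}$ among their generic points, the corresponding factor $\Pi_{q-p}(X-W,\overline{\{x\}}\setminus W)$ --- in which $\overline{\{x\}}\setminus W$ exhausts the non-empty opens of $\overline{\{x\}}$ through $x$ while the ambient $t$-neighborhoods shrink towards the (henselian) local scheme at $x$ --- becomes $\Pi_{q-p}^x(X_{(x)}^t)$, exactly as in the abelian computation of \cite{CHK}. Hence
$$
E_{1,c}^{p,q}(X,\Pi_*)\;\simeq\;\colim_{S\in\mathcal P^f(X^{(p)})}\prod_{x\in S}\Pi_{q-p}^x(X_{(x)}^t),
$$
the transition map for $S\subseteq S'$ being the coordinate inclusion: when a separated pair is enlarged by a new codimension-$p$ point $x'$, the new component $\overline{\{x'\}}\setminus W'$ meets the old support inside the enlarged closed set $W'$, so the old class restricts trivially to it. By \eqref{eq:rest-product} this colimit is ${\prod}'_{x\in X^{(p)}}\Pi_{q-p}^x(X_{(x)}^t)$, formed in pointed $\E$-sets if $q-p=0$, in $\E$-groups if $q-p=1$, and in abelian $\E$-groups if $q-p\geq 2$; in the last case (\Cref{num:restricted-product}) the restricted product is the direct sum. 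This is the claimed formula.

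I expect the main obstacle to be the bookkeeping in these cofinality reductions rather than any new idea: checking that separated pairs and the $(Z_S,W_S)$ are genuinely cofinal, that second additivity applies because the $T_x$ are pairwise disjoint and closed in $X-W$, that the $x$-local colimit really is $\Pi_{q-p}^x(X_{(x)}^t)$ and nothing coarser (this is the sole use of $t$-excision), and --- most delicately --- that the reindexed colimit has the coordinate inclusions as transition maps, so that one lands in the \emph{restricted} product rather than the full product. One must also carry the three algebraic structures along throughout, since for $q-p\leq 1$ the argument genuinely leaves the abelian setting and cannot rely on exactness of filtered colimits of abelian groups; this is the only substantive departure from \cite{CHK}.
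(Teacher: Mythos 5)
Your proposal is correct and follows essentially the same route the paper takes: the paper's proof is a one-line deferral to the analogous abelian computation (\cite[Lem.\ 1.14]{Deg11}) ``once the additivity property has been properly identified,'' and your argument is precisely that computation spelled out --- cofinality of separated pairs, second additivity to split off the finite product, $t$-excision to identify each factor with $\Pi_{q-p}^x(X_{(x)}^t)$, and the reindexing that produces the restricted product of \eqref{eq:rest-product}. Your closing remarks about the coordinate-inclusion transition maps and about carrying the non-abelian structures for $q-p\leq 1$ are exactly the points the paper's citation glosses over.
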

\begin{proof}
Once the additivity property as been properly identified,
 the proof goes as in \cite[Lem. 1.14]{Deg11}
 taking into account formula \eqref{eq:rest-product}. 
\end{proof}

\begin{corollary}\label{cor:coniveau_ssp_refined}
Consider the assumptions of the preceding proposition.
 
Then for any integer $q \geq 0$,
 the $q$-truncated cohomotopical complex $E_{1,c}^{*,q}(X,\Pi_*)$
 (\Cref{df:coaugmented_cohtp_cpx})
 associated with the coniveau unstable spectral sequence \eqref{eq:coniv_ssp}
 has the following shape:
\begin{equation}\label{eq:coniveau_ssp_refined}
\bigoplus_{x \in X^{(0)}} \Pi_{q}(\kappa(x))
 \rightarrow \bigoplus_{x \in X^{(1)}} \Pi^x_{q}(X^t_{(x)})
 \rightarrow \cdots
 \rightarrow \underset{x \in X^{(q-1)}}{{\prod}'} \Pi_1^x(X^t_{(x)})
 \Rightarrow \underset{x \in X^{(q)}}{{\prod}'} \Pi_{0}^x(X^t_{(x)})
\end{equation}
where we have assumed $X$ is reduced to simplify the first term.
\end{corollary}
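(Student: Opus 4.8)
The statement follows by feeding \Cref{prop:coniveau_ssp_refined} into the definition of the truncated cohomotopical complex attached to a spectral sequence, so the plan is largely bookkeeping.

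First I would unwind what ``the $q$-truncated cohomotopical complex $E_{1,c}^{*,q}(X,\Pi_*)$'' means. By the indexing convention of \Cref{num:cohomotopical_notation} applied to the coniveau spectral sequence \eqref{eq:coniv_ssp}, this is the $q$-truncated cohomotopical complex, in the sense of \Cref{df:coaugmented_cohtp_cpx}, whose terms are $C^p = E_{1,c}^{p,q}(X,\Pi_*) = \Pi_{q-p}(X^{=p})$ for $0\le p\le q$, whose differentials are the $d_1^{p,q}$ of the spectral sequence, and whose last arrow $C^{q-1}\Rightarrow C^q$ is the action of the $\E$-group $\Pi_1(X^{=q-1})$ on the pointed $\E$-object $\Pi_0(X^{=q})$. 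Hence it suffices to identify each term $C^p$; the shape of the arrows ($\rightarrow$ versus $\Rightarrow$) is then forced by the fact that $\Pi_n$ takes values in abelian $\E$-groups for $n\ge 2$, in $\E$-groups for $n=1$, and in pointed $\E$-objects for $n=0$.

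Next I would apply \Cref{prop:coniveau_ssp_refined} term by term: for $0\le p\le q-2$ it gives $C^p\simeq\bigoplus_{x\in X^{(0)}}\Pi_{q-p}^x(X^t_{(x)})$ (more precisely, the sum over $X^{(p)}$); for $p=q-1$ it gives $C^{q-1}\simeq{\prod}'_{x\in X^{(q-1)}}\Pi_1^x(X^t_{(x)})$; and for $p=q$ it gives $C^q\simeq{\prod}'_{x\in X^{(q)}}\Pi_0^x(X^t_{(x)})$. This already produces all of \eqref{eq:coniveau_ssp_refined} except the leftmost term in its reduced form. When $q\le 1$ the first regime is empty and the middle and last regimes overlap, so the leftmost term of \eqref{eq:coniveau_ssp_refined} should be read as the appropriate restricted product (of pointed $\E$-objects if $q=0$, of $\E$-groups if $q=1$) rather than a literal direct sum; keeping track of which terms are honest coproducts of abelian $\E$-groups and which are only restricted products is really the one point requiring attention.

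Finally I would simplify $C^0$ using the hypothesis that $X$ is reduced. A point $x\in X^{(0)}$ is a generic point of $X$, so the $t$-local ring of $X$ at $x$ is the field $\kappa(x)$, the pro-scheme $X^t_{(x)}$ of $t$-neighborhoods of $x$ is $\Spec\kappa(x)$, and the closed subscheme $\{x\}$ is all of it; therefore the cofiber $X^t_{(x)}/(X^t_{(x)}-x)$ computing the cohomotopy with supports reduces to $(\Spec\kappa(x))_+$, and $\Pi_q^x(X^t_{(x)})\simeq\Pi_q(\kappa(x))$ in the notation of the statement. Summing over the generic points by the first additivity axiom of \Cref{num:cohtp_support} then gives $C^0\simeq\bigoplus_{x\in X^{(0)}}\Pi_q(\kappa(x))$ (again a restricted product when $q\le 1$), and assembling these identifications yields \eqref{eq:coniveau_ssp_refined}. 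There is no real obstacle here: the substantive computation is \Cref{prop:coniveau_ssp_refined} itself, and what remains is the substitution above together with care about the non-abelian indexing in degrees $0$ and $1$.
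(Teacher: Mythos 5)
Your proposal is correct and follows exactly the route the paper intends: the corollary is an immediate reindexing of \Cref{prop:coniveau_ssp_refined} into the $q$-truncated cohomotopical complex of \Cref{num:cohomotopical_notation}, plus the observation that for a generic point of a reduced scheme one has $X^t_{(x)}\simeq\Spec\kappa(x)$ with empty open complement, so $\Pi_q^x(X^t_{(x)})\simeq\Pi_q(\kappa(x))$. Your attention to the low-degree edge cases (restricted products rather than direct sums when $q\le 1$) is the only point of substance, and you handle it correctly.
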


\begin{definition}\label{df:unst-Gersten-complex}
Consider the assumptions of the preceding corollary.
 The unstable $q$-truncated homotopical complex \eqref{eq:coniveau_ssp_refined} will be called
 the \emph{homotopical Gersten complex} of $X$ with coefficients in $\Pi_*$ and in degree $q$.
 We will use the notation:
$$
C^p(X,\Pi_q):=E_{1,c}^{p,q}(X,\Pi_*)=\underset{x \in X^{(q)}}{{\prod}'} \Pi_{q-p}^x(X^t_{(x)}).
$$
\end{definition}
Crucially, the homotopical Gersten complex is bi-augmented:
 according to \Cref{num:co_augmentations_ssp}, it
 admits an augmentation map
$$
\tau_q^X:\Pi_q(X) \rightarrow C^0(X,\Pi_q)=\underset{x \in X^{(0)}}{{\prod}'} \Pi_q^x(X^t_{(x)})
$$
and, according to \Cref{num:augmentation-ssp}, a co-augmentation map:
$$
\gamma:C^q(X,\Pi_q)=\underset{x \in X^{(q)}}{{\prod}'} \Pi_0^x(X^t_{(x)})
 \rightarrow \Pi_0(X_+^{\leq q}).
$$
It is usually more accurate to consider the following co-augmentation:
$$
\epsilon_q^X:C^q(X,\Pi_q)=\underset{x \in X^{(q)}}{{\prod}'} \Pi_0^x(X^t_{(x)})
 \xrightarrow{\gamma} \Pi_0(X_+^{\leq q}) \rightarrow \Pi_0(X_+^{\leq q})/\Pi_0(X_+^{\leq q+1}).
$$

\begin{notation}\label{num:functorial-coniv-ssp-E-coef}
It is well-known that the coniveau filtration is functorial with respect
 to flat pullbacks.
 In particular, given a flat morphism $f:Y \rightarrow X$ of schemes,
 as the pullback along $f$ respects codimension, one gets
 a morphism of ordered sets: $f^{-1}:\flag(X) \rightarrow \flag(Y), Z^* \mapsto T^*=Z^* \times_X Y$.
 Moreover, $f$ induces morphisms of closed pairs $(Y,T^p) \rightarrow (X,Z^p)$
 and therefore one deduces that the diagram \eqref{eq:octaedron-coniv} is natural
 with respect to $f$, resulting in a commutative diagram in $\iPSh(\Sch)$:
$$
\xymatrix@R=-4pt@C=10pt{
& Y^{\leq p}_+\ar_{}[ld]\ar[rd]\ar@{..>}[rddd] && Y^{\leq p-1}_+\ar_{}[ll]\ar[rd]\ar@{..>}[rddd] & \\
Y_+\ar@{}|/2pt/{(*)}[r]\ar[rd]\ar@{..>}[rddd]
 & & Y^{=p}\ar@{-->}[ru]\ar[rd]\ar@{}|/3pt/{(*)}[u]\ar@{}|/3pt/{(*)}[d]\ar@{..>}[rddd]
 & & Y_+\ar@{}|/2pt/{(*)}[l]\ar[ld]\ar@{..>}[rddd] \\
& Y^{>p}\ar@{-->}[uu]\ar@{-->}[ru]\ar@{..>}[rddd] && Y^{>p+1}\ar[ll]\ar@{-->}[uu]\ar@{..>}[rddd] & \\
&& X^{\leq p}_+\ar_{}[ld]\ar[rd] && X^{\leq p-1}_+\ar_{}[ll]\ar[rd] & \\
&X_+\ar@{}|/2pt/{(*)}[r]\ar[rd]
 & & X^{=p}\ar@{-->}[ru]\ar[rd]\ar@{}|/3pt/{(*)}[u]\ar@{}|/3pt/{(*)}[d]
 & & X_+\ar@{}|/2pt/{(*)}[l]\ar[ld] \\
&& X^{>p}\ar@{-->}[uu]\ar@{-->}[ru] && X^{>p+1}\ar[ll]\ar@{-->}[uu] &
}
$$
Let us now come back to the situation of a small site $X_t$ for one of the topologies:
 $t=\zar, \nis$. If $f:V \rightarrow W$ is a morphism of schemes in $X_t$,
 it is in particular flat (in fact \'etale). Thus the above diagram can be taken
 in the $\infty$-topos $\iSh(X_t)$.

In particular, given a homotopy functor $\Pi_*:\Sh(X_t) \rightarrow \E_{\pi_*}$,
 one deduces that diagram \eqref{eq:double-ec-coniv} and therefore the left and right unstable
 coniveau exact couples are functorial in $X_t$. Consequently,
 the same holds for the unstable coniveau spectral sequence \eqref{eq:coniv_ssp}
 and for the biaugmented unstable Gersten complex \eqref{df:unst-Gersten-complex}.
 As a result, one gets pullback maps:
\begin{equation}\label{eq:Gersten-functorial}
f^*:C^*(U,\Pi_*) \rightarrow C^*(V,\Pi_*)
\end{equation}
in the category of (biaugmented) unstable complexes with coefficients in $\E$.
\end{notation}

\subsection{Unstable Gersten complexes}

\begin{notation}\label{notn:unstable-gersten}
Let again $t=\zar, \nis$, and $X$ be a scheme.
The classical framework of $t$-local cohomology theory with supports can be extended
 to the non-abelian setting (see \cite[\textsection IV]{Hart66} for Zariski sheaves of abelian groups,
 and \cite[\textsection 4.3]{DFJ22} in the Nisnevich case).
 Given a pointed $t$-sheaf $F$ of sets, in $\Sh(X_t)_*$,
 and $Z \subset X$ a closed subscheme, one puts:
 $$\Gamma_Z(X,F)=\{\rho \in F(X) \mid \forall x \notin Z, \rho_x=* \}.$$
 Similarly, $\uG_Z(F)$ is the pointed $t$-sheaf
 $V \mapsto \uG_{Z \times_X V}(F|_V)$.
 If $T \subset Z$, let $\uG_{Z/T}(F)$ be the cokernel,
 in the category of pointed $t$-sheaves, of the natural pointed map
 $\uG_T(F) \rightarrow \uG_Z(F)$.

We next use the notation of the coniveau tower laid down in \Cref{num:coniveau_ssp},
 and define:
\begin{align*}
\uG_{X^{\geq p}}(F)=\colim_{Z^* \in \flag(X)} \uG_{Z^p}(F), \quad 
\uG_{X^{=p}}(F)=\colim_{Z^* \in \flag(X)} \uG_{Z^p/Z^{p+1}}(F).
\end{align*}
The following lemma is the non-abelian version (and $t$-local) version
 of \cite[Prop. 2.1]{Hart66}, based on the notion of restricted product
 \eqref{eq:rest-product}.
\end{notation}
\begin{lemma}
Let $F$ be a pointed $t$-sheaf of sets on a scheme $X$ and consider the above notation.
 For any integer $p \geq0$,
 the following conditions are equivalent:
\begin{enumerate}[label=\emph{(\roman*)}]
\item There exists an isomorphism $F \simeq {\prod}'_{x \in X^{(p)}} {i_x}_*(M_x)$
 where for any point $x \in X^{(p)}$, $M_x$ is a pointed set,
 and $i_x:\{x\} \rightarrow X$ is the canonical immersion.
\item The canonical pointed maps $F \leftarrow \uG_{X^{\geq p}}(F) \rightarrow \uG_{X^{=p}}(F)$
 are isomorphisms.
\end{enumerate}
Moreover, when these conditions hold, the isomorphism of (i)
 induces an isomorphism $M_x \simeq F_x$.
\end{lemma}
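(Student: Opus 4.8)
The plan is to recast condition (ii) into a more workable form and then to go around via the restricted product of skyscrapers, proving the equivalence by a stalkwise analysis. Assuming, as we may (the statement being local and the noetherian case being the one of interest), that $X$ is noetherian, I would first note that all the functors in play — $\uG_Z$, $\uG_{X^{\geq p}}$, $\uG_{X^{=p}}$, the restricted product, and the pushforwards $x_*$ — commute with restriction along $t$-maps, so everything can be tested locally and, where convenient, on stalks. The first step is then: the $\uG_{Z^p}(F)$ form a filtered system of subobjects of $F$ with inclusions as transitions, so $\uG_{X^{\geq p}}(F)\hookrightarrow F$ is a monomorphism; passing to the colimit in the cokernel presentation $\uG_{Z^p/Z^{p+1}}(F)=\coker(\uG_{Z^{p+1}}(F)\to\uG_{Z^p}(F))$ yields $\uG_{X^{=p}}(F)=\uG_{X^{\geq p}}(F)/\uG_{X^{\geq p+1}}(F)$ and shows $\uG_{X^{\geq p}}(F)\to\uG_{X^{=p}}(F)$ is always an epimorphism. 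Hence (ii) is equivalent to the pair of conditions $\uG_{X^{\geq p}}(F)\xrightarrow{\ \sim\ }F$ and $\uG_{X^{\geq p+1}}(F)=*$, which I will use as a working form of (ii).

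For (i) $\Rightarrow$ (ii) and the last assertion, the key computation is that for $x\in X^{(p)}$ and any closed $Z\subseteq X$ one has $\uG_Z\bigl(x_*(M)\bigr)=x_*(M)$ if $x\in Z$ and $=*$ otherwise; as $\uG_Z$ preserves finite limits and filtered colimits it commutes with restricted products. Granting (i) and using that every finite set of codimension-$p$ points is the set of codimension-$p$ generic points of $Z^p$ for a suitable flag $Z^*$, one gets $\uG_{X^{\geq p}}(F)={\prod}'_{x\in X^{(p)}}x_*(M_x)=F$, while $\uG_{X^{\geq p+1}}(F)=*$ because no codimension-$p$ point lies in a codimension-$\geq p+1$ closed subset; by the reformulation this is (ii). Taking the stalk at $x$ in $F={\prod}'_{y}y_*(M_y)$ gives $F_x\simeq M_x$.

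For (ii) $\Rightarrow$ (i), I would define $\phi\colon F\to{\prod}'_{x\in X^{(p)}}x_*(F_x)$ by sending a local section $\rho$ to the family $(\rho_x)_{x\in X^{(p)}}$ of its germs at codimension-$p$ points; this lands in the restricted product since $F=\uG_{X^{\geq p}}(F)$ forces $\rho$ to be, locally, supported on a closed set of codimension $\geq p$, which has only finitely many codimension-$p$ points. It remains to see $\phi$ is an isomorphism, which I check on stalks at each $z$, the only nontrivial case being $\codim z>p$. Injectivity there: if all germs of $\rho$ at codimension-$p$ points vanish, shrink a representative so that every codimension-$p$ generic point of its support specializes to $z$ (discard the rest), then shrink again around each of these finitely many points where $\rho$ is trivial; now $\rho$ is supported on a closed set of codimension $\geq p+1$, hence trivial by $\uG_{X^{\geq p+1}}(F)=*$. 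Surjectivity there: given prescribed germs $m_{x_1},\dots,m_{x_n}$ at codimension-$p$ generizations of $z$, realize each one by a section supported (after the same shrinking) on $\overline{\{x_i\}}$, and patch these into a single section near $z$ using $t$-excision together with $\uG_{X^{\geq p+1}}(F)=*$ to kill the pairwise intersections $\overline{\{x_i\}}\cap\overline{\{x_j\}}$, all of codimension $\geq p+1$. A stalkwise isomorphism of $t$-sheaves is an isomorphism, establishing (i) with $M_x=F_x$.

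The hard part is precisely the surjectivity in the last step: because sheaves of pointed sets carry no addition, one cannot simply sum the sections realizing the individual $m_{x_i}$, and producing one honest section near $z$ with all the prescribed germs — and trivial germ at every other codimension-$p$ generization — is exactly where the vanishing $\uG_{X^{\geq p+1}}(F)=*$ must be combined with the excision property of the $t$-topology. A secondary technical nuisance is that the support of a section of a sheaf of sets need not be closed; this is harmless here because $\uG_{X^{\geq p+1}}(F)=*$ is only ever invoked for sections that, after the shrinking procedure, are genuinely supported on a prescribed closed subset of codimension $\geq p+1$.
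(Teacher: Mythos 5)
Your reformulation of (ii) as the pair of conditions ``$\uG_{X^{\geq p}}(F)\to F$ is an isomorphism and $\uG_{X^{\geq p+1}}(F)=*$'' is correct, and your proof of (i)$\Rightarrow$(ii) together with the identification $M_x\simeq F_x$ is sound (and far more detailed than the paper, which disposes of the whole lemma by asserting the sectionwise computation $\uG_{X^{=p}}(F)(U)\simeq{\prod}'_{x\in U^{(p)}}F_x$ and pointing elsewhere). The gaps are both in the stalkwise verification of (ii)$\Rightarrow$(i) at a point $z$ of codimension $>p$. First, what you call injectivity of $\phi_z$ is only triviality of its kernel: a section whose codimension-$p$ germs all vanish is trivial. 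For pointed sets with no group structure this is strictly weaker than injectivity (the paper is itself careful about this distinction in (T1)--(T3)), and your argument says nothing about two distinct germs at $z$ having the same codimension-$p$ germs. Second, and more seriously, the surjectivity step assumes that a germ $m_i\in F_{x_i}$ can be realized by a section of $F$ defined on a neighbourhood of the \emph{specialization} $z$ and supported on $\overline{\{x_i\}}$. Condition (ii) only provides a representative on some neighbourhood of $x_i$; it gives no mechanism for extending it across the specialization $x_i\rightsquigarrow z$, and neither $t$-excision nor $\uG_{X^{\geq p+1}}(F)=*$ supplies one. This extension property is exactly the flasqueness enjoyed by the pushforwards $x_*M_x$ and not by a general $F$ satisfying (ii).

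These are not gaps that more care can close, because with $\uG_{Z/T}$ taken to be the naive cokernel of pointed sheaves (as in the paper's definition), (ii) genuinely does not imply (i). Take $X=\Spec$ of a discrete valuation ring with generic point $\eta$ and closed point $s$, $t=\zar$, $p=0$, and let $F$ be the pointed Zariski sheaf with $F(X)=\{*\}$ and $F(\{\eta\})=\{*,b\}$. Then $\uG_{X^{\geq 0}}(F)=F$ and $\uG_{X^{\geq 1}}(F)=*$, so (ii) holds, yet $F\not\simeq\eta_*(F_\eta)$ since $F(X)$ is a singleton while $\big(\eta_*F_\eta\big)(X)=F_\eta$ has two elements; a variant with $F(X)=\{*,a,a'\}$ both mapping to $b$ defeats injectivity as well. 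The repair is the one Hartshorne makes in the abelian setting: $\uG_{Z/Z'}$ should be the $j_*j^*$-corrected (derived) version, with $j$ the open immersion of $X-Z'$, rather than the quotient $\uG_Z(F)/\uG_{Z'}(F)$; with that definition (ii) does force the required extension across specializations and your stalkwise strategy goes through. As written, your argument establishes (i)$\Rightarrow$(ii), the computation of $F_x$, and only the trivial-kernel fragment of the converse.
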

\begin{proof}
The lemma easily follows once we get the computation, for any open $U \subset X$:
$$
\uG_{X^{=p}}(F)(U)\simeq {\prod}'_{x \in U^{(p)}} F_x
$$
which follows as in the proof of \Cref{prop:coniveau_ssp_refined}.
\end{proof}

We can now consider a non-abelian and $t$-local version of Cousin complexes (see \cite[Def. p. 241]{Hart66}).
\begin{definition}\label{df:Cousin-cpx}
Consider the above assumption and notation.
Let $X$ be a scheme and $p \geq 0$ be an integer.
 A pointed $t$-sheaf $F$ on $X$ is said to be \emph{supported in $X^{(p)}$}
 if the equivalent conditions of the preceding proposition hold.\footnote{This terminology seems
 preferable to ``lies on the $p$-th skeleton'' in \cite[Def. p. 231]{Hart66}.}

Let $C^*$ be a $q$-truncated cohomotopical complex with coefficients
 in the topos $\Sh(X_t)$ (\Cref{num:cohomotopical_notation}).
 We say that $C^*$ is a \emph{$q$-truncated $t$-local unstable Cousin complex},
 if for any integer $0 \leq p \leq q$,
 the sheaf $C^p$ is supported in $X^{(p)}$ in the above sense.
 If $q\geq \dim(X)$, we simply say, $C^*$ is a \emph{unstable $t$-local Cousin complex}.
\end{definition}
In fact, if $q \geq \dim(X)+2$, a $q$-truncated cohomotopical complex $C^*$
 with coefficients in the topos $\Sh(X_t)$ is simply a complex of abelian
 $t$-sheaves on $X$. In this case, $C^*$ is an unstable $\zar$-local (resp. $\nis$-local)
 Cousin complex over $X$ if and only if it is a Cousin complex over $X$ in the sense of Hartshorne
 \cite[Def. p. 241]{Hart66} (resp. \cite[Def. 4.3.7]{DFJ22}).

\begin{notation}\label{num:t-local-coniv-ssp}
We consider a scheme $X$ and one of the topologies $t=\zar, \nis$.
 We let $\Pi_*$ be a $t$-local cohomotopy theory with supports defined on $X_t$
 (for example, a cohomotopy functor $\Pi_*:\iSh(X_t)^{op} \rightarrow \Set$).

 We have associated to $\Pi_*$ respectively in \Cref{eq:octaedron-coniv},
 \Cref{eq:coniv_ssp}, and \Cref{df:unst-Gersten-complex},
 two unstable coniveau exact couples, a coniveau unstable spectral sequence,
 and a Gersten complex, with respect to $X$, and even to any scheme $V$ in $X_t$.
 We know from \Cref{num:functorial-coniv-ssp-E-coef}
 that they are in fact functorial in the scheme $V$ in $X_t$.
 The associated $t$-sheaf functor being exact, it preserves homotopy long exact sequences
 and (co)homotopical biaugmented complexes.
 This implies that we can $t$-sheafify all the constructions of \eqref{num:coniveau_ssp}
 meaning we obtain unstable exact couples with coefficients in the $1$-topos $\Sh(X_t)$:
\begin{equation}\label{eq:double-ec-coniv-sheaf}
\xymatrix@C=10pt@R=4pt{
& \uPi^{X^{\leq p}}_{q-p}\ar^{\alpha}[rr]\ar@{=>}^b[dd]
 && \uPi^{X^{\leq p-1}}_{q-p}\ar@{=>}[dd]\ar@{=>}^{\beta}[ld] & \\
\uPi^X_{q-p}\ar^a[ru]\ar@{}|/8pt/{(*)}[r]
 & & \uPi^{X^{=p}}_{q-p}\ar@{=>}_{\bar \gamma}[ld]\ar^{\gamma}[lu]\ar@{}|/2pt/{(*)}[u]\ar@{}|/2pt/{(*)}[d]
 & & \uPi^X_{q-p}\ar@{}|/8pt/{(*)}[l]\ar[lu] \\
& \uPi^{X^{>p}}_{q-p}\ar_{\bar \alpha}[rr]\ar^c[lu]
 && \uPi^{X^{>p-1}}_{q-p}\ar[ru]\ar_{\bar \beta}[lu] &
}
\end{equation}
where $\uPi_*=\uPi^X_*$ (resp. $\uPi^{X^{\leq p}}_*$, $\uPi^{X^{>p}}_*$, $\uPi^{X^{=p}}_*$) is the $t$-sheaf, on the small site $X_t$,
 associated with the presheaf $V \mapsto \Pi_*(V)$
 (resp. $\Pi_*(V^{\leq p})$, $\Pi_*(V^{>p})$, $\Pi_*(V^{=p})$),
 seen as a $\Sh(X_t)_*$-structure (in the sense of \Cref{df:pi*-structure}).
 Therefore, one deduces as in \emph{loc. cit.} an unstable spectral sequence with coefficients
 in $\Sh(X_t)$:
\begin{equation}\label{eq:coniv_ssp-sheaf}
\uE_{1,c}^{p,q}(X_t,\Pi_*):=\uPi^{X^{=p}}_{q-p} \Rightarrow \uPi^X_{q-p}
\end{equation}
which is part of both a left and a right unstable exact couple.
 The left one is given by the formula: $\underline D^{p,q}_{c}(X_t,\Pi_*)=\uPi^{X^{\leq p}}_{q-p}$.
\end{notation}
\begin{definition}\label{df:t-local-coniv-ssp}
The unstable spectral sequence \eqref{eq:coniv_ssp-sheaf}
 will be called the \emph{$t$-local unstable coniveau spectral sequence} associated with
 the cohomotopy theory with support $\Pi_*$.

For any integer $q \geq 0$, the cohomotopical complex $\Ge^*(X_t,\Pi_*,q):=\uE_{1,c}^{*,q}(X_t,\Pi_*)$
 corresponding to the $q$-th line of the preceding spectral sequence will be called the \emph{$t$-local (unstable) Gersten complex}
 in degree $q$ associated with $\Pi_*$ on the site $X_t$. When $t$ is clear, we simply say \emph{Gersten complex}.

When $\Pi_*=\Pi_*^\cX$ is represented by a simplicial $t$-sheaf $\cX$ as in \Cref{ex:rep_cohtp_functor},
 we will write: $\Ge^*_t(\cX,q)=\Ge^*(X_t,\Pi^\cX_*,q)$.
\end{definition}
The Gersten complex $\Ge^*(X_t,\Pi_*,q)$ associated with a cohomotopy theory $\Pi_*$ on the small site $X_t$
 is a cohomotopical complex in the topos $\Sh(X_t)$, which, according to \Cref{cor:coniveau_ssp_refined}
 can be computed by the following formula:
\begin{equation}
\Ge^p(X_,\Pi_*,q)={\prod}'_{x \in X^{(p)}} {i_x}_*\Big(\Pi_{q-p}^x(X_{(x)})\Big)
\end{equation}
where the restricted product is taken in the category of $t$-sheaves on $X$ respectively
 of pointed sets if $q-p=0$, groups if $q-p=1$ and abelian groups if $q-p>1$
 (in which case, it is simply a direct sum).
 Moreover,  it admits a canonical augmentation (as described in \Cref{num:co_augmentations_ssp}):
$$
\tau:\uPi_q \xrightarrow{\ c^{-1}\ } \uPi_q^{X^{>-1}} \xrightarrow{\ \beta\ } \uPi_q^{X=0}=\Ge^0(X_t,\Pi_*,q).
$$

\begin{remark}
We have restricted the above definition to homotopical functors $\Pi_*$ with values in $\Set$,
 but everything will work with coefficients in an arbitrary topos. We will not use this generality here,
 but the case of the classifying topos $BG$ associated with a discrete group $G$, or the pro-\'etale topos
 associated to a field, are interesting examples.
\end{remark}

\begin{lemma}\label{lm:ssp=Cousin}
Consider the notation of the above definition.
 Then the Gersten complex $\Ge^*(X_t,\Pi_*,q)$ associated with $\Pi_*$
 is a $q$-truncated $t$-local Cousin complex in the sense of \Cref{df:Cousin-cpx}.
\end{lemma}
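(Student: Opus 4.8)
The plan is to reduce the statement entirely to the explicit computation of the terms of the Gersten complex recorded in \Cref{cor:coniveau_ssp_refined}, and then to recognize that shape as precisely the ``supported in $X^{(p)}$'' condition from the lemma preceding \Cref{df:Cousin-cpx}. So first I would unwind \Cref{df:Cousin-cpx}: it suffices to prove that for each integer $0 \leq p \leq q$ the $t$-sheaf $\Ge^p(X_t,\Pi_*,q)$ is supported in $X^{(p)}$, \emph{i.e.} satisfies condition (i) of that lemma --- namely that it is isomorphic to a restricted product ${\prod}'_{x \in X^{(p)}} x_*(M_x)$ of skyscrapers over the codimension-$p$ points, with each $M_x$ a pointed set when $q-p=0$, a group when $q-p=1$, and an abelian group when $q-p>1$.

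Next, I would invoke the formula $\Ge^p(X_t,\Pi_*,q) \simeq {\prod}'_{x \in X^{(p)}} x_*\big(\Pi^x_{q-p}(X_{(x)})\big)$ established just after \Cref{df:t-local-coniv-ssp} (itself a consequence of \Cref{cor:coniveau_ssp_refined} and \Cref{prop:coniveau_ssp_refined}). This is already of the required form, with $M_x := \Pi^x_{q-p}(X_{(x)})$, and the algebraic type of $M_x$ (pointed set / group / abelian group) is exactly the one demanded, by the last clause of \Cref{df:unst-Gersten-complex}. The one small point worth spelling out is that the criterion was phrased for pointed $t$-sheaves of sets, while $\Ge^p$ is group- (resp. abelian-group-) valued when $q-p\geq 1$; this is harmless, since the forgetful functor to pointed sheaves of sets commutes with restricted products and with $x_*$, so the underlying pointed sheaf of sets of $\Ge^p$ is visibly of the skyscraper-restricted-product form and the criterion applies verbatim to it.

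Assembling the two steps gives that every $\Ge^p(X_t,\Pi_*,q)$ with $0\leq p\leq q$ is supported in $X^{(p)}$, hence $\Ge^*(X_t,\Pi_*,q)$ is a $q$-truncated $t$-local unstable Cousin complex in the sense of \Cref{df:Cousin-cpx} --- and an unstable $t$-local Cousin complex outright once $q\geq\dim(X)$. I do not anticipate a genuine obstacle: all the analytic content sits in \Cref{cor:coniveau_ssp_refined} and \Cref{prop:coniveau_ssp_refined}, and what remains is the bookkeeping identification above. The only spot requiring a little care is the compatibility of the restricted-product description with $t$-sheafification and with restriction along maps in $X_t$, so that the formula genuinely describes a $t$-sheaf and not merely a presheaf; but this is exactly what the proof of \Cref{prop:coniveau_ssp_refined} already supplies via $t$-locality (in particular $t$-excision) of $\Pi_*$.
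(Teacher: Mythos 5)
Your proposal is correct and matches the paper's argument: the paper's proof simply says the lemma is immediate from \Cref{cor:coniveau_ssp_refined}, and your write-up spells out exactly that reduction — the restricted-product-of-skyscrapers formula for $\Ge^p(X_t,\Pi_*,q)$ is precisely condition (i) of the lemma characterizing sheaves supported in $X^{(p)}$. The extra care you take about the forgetful functor to pointed sheaves of sets and about $t$-sheafification is harmless elaboration of the same route.
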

This is immediate according to \Cref{cor:coniveau_ssp_refined}.
 
\begin{definition}\label{df:Gersten-ppty}
Consider the setting of preceding definition.
Given an integer $q \geq 0$, we will say that $\Pi_*$ is \emph{Gersten
 in degree $q$} on $X_t$ if the $q$-truncated augmented homotopical complex
 of sheaves on $X_t$
$$
\uPi_q \rightarrow \Ge^*(X_t,\Pi_*,q)
$$ 
is exact. We simply say that $\Pi_*$ is \emph{Gersten on $X_t$} 
 if it is so in all degrees $q\geq 0$.

If $\Pi_*=\Pi_*^\cX$ is represented by a simplicial sheaf $\cX$ on $X_t$
 (\Cref{ex:rep_cohtp_functor}),
 we simply say that $\cX$ is \emph{Gersten on $X_t$} (resp. \emph{and in degree $q$})
 if $\Pi_*^\cX$ is so.
\end{definition}

The remaining of the paper will give several situations where this property holds.
 According to our main result \Cref{thm:degeneracy}, we deduce the following characterization of
 the above (unstable) Gersten property.
\begin{proposition}\label{prop:carac-Gersten}
Let $t=\zar, \nis$, let $X$ be a scheme,
 and $\Pi_*$ be a $t$-local cohomotopy with supports defined on $X_t$.
 Then the following conditions are equivalent:
\begin{enumerate}
\item[(i)] $\Pi_*$ is Gersten in degree $q$ on $X_t$.
\item[(ii)] For any $x \in X$, the unstable coniveau spectral sequence
$$
E_{1,c}^{p,q}(X^t_{(x)},\Pi_*) \Rightarrow \Pi_{q-p}(X_{(x)}^t)
$$
with coefficients in the $t$-local cohomotopy theory with supports $\Pi_*$
 restricted to the small $t$-site of $X_{(x)}^t$,
 collapses on the column $j=0$ and the line $q$ (see \Cref{df:ssp-collapse}).
\item[(iii)] for any point $x \in X$, $\sX=X_{(x)}^t$,
 any integer $0 \leq p \leq q$,
 any $n=p, p+1$, any closed subscheme $Z \subset \sX$ such that $\codim_\sX(Z)>n$,
 and for any element $\alpha \in \Pi_{q-p}(\sX,Z)$,
 there exists a closed subscheme $T$ of $\sX$ such that $Z \stackrel i \subset T$, $\codim_\sX(T)\geq n$, 
 and $i_*(\alpha)=*$ in $\Pi_{q-p}(\sX,T)$.
\end{enumerate}
\end{proposition}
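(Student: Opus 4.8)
The plan is to fix a point $x\in X$, pass to the local scheme $\sX=X^t_{(x)}$, and recognise everything in sight as an instance of the degeneracy criterion \Cref{thm:degeneracy} applied to the coniveau tower $\sX^{\leq *}_+/\sX_+$, which is $d$-bounded with $d=\dim\sX\in\N\cup\{+\infty\}$, so the theorem applies.

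\emph{Step 1: (i) $\Leftrightarrow$ (ii).} Exactness of a complex of sheaves of pointed sets / groups / abelian groups on the small site $X_t$, in the sense of \Cref{df:coaugmented_cohtp_cpx} (equality of the relevant images and kernels, triviality of the relevant kernels, and transitivity of the relevant actions), can be tested stalkwise: $X_t$ has enough points for $t=\zar,\nis$, and the stalk functor, being a filtered colimit of sections, commutes with finite limits, with colimits, and with the formation of orbit objects. So (i) is equivalent to saying that, for every $x$, the stalk at $x$ of the augmented complex $\uPi_q\to\Ge^*(X_t,\Pi_*,q)$ is exact. I would then identify this stalk. By the Kan-extension formula of \Cref{num:coniveau_ssp}, $(\uPi_q)_x=\Pi_q(\sX)$; since restricted products and direct images $y_*$ commute with stalks, and since the generizations of $x$ are exactly the points of $\sX$ (with unchanged codimension and unchanged $t$-local rings), $(\Ge^p(X_t,\Pi_*,q))_x={\prod}'_{y\in\sX^{(p)}}\Pi^y_{q-p}(\sX^t_{(y)})$, which by \Cref{cor:coniveau_ssp_refined} applied to $\sX$ is $E_{1,c}^{p,q}(\sX,\Pi_*)$; and, using the functoriality of the coniveau construction with respect to flat (here: \'etale) maps (\Cref{num:functorial-coniv-ssp-E-coef}) together with the compatibility of long homotopy sequences with filtered colimits (\Cref{prop:colimits-htp-seq}), these identifications carry the differentials and the augmentation $\tau$ of the stalk complex onto those of the Gersten complex of $\sX$, i.e. onto the co-augmented homotopical complex \eqref{eq:augmented_hcpx0} of the coniveau tower of $\sX$. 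By the equivalence ``(iii) $\Leftrightarrow$ collapse'' built into \Cref{thm:degeneracy} and \Cref{df:ssp-collapse}, exactness of that complex is exactly the essential collapse of the coniveau spectral sequence of $\sX$ on the column $*=0$ and line $q$, which is (ii).

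\emph{Step 2: (ii) $\Leftrightarrow$ (iii).} Here I would use the other equivalent form of the collapse supplied by \Cref{thm:degeneracy}, namely its condition (i): for all $(p,i)$ in the index set $I$, the pointed map $\bar\alpha\colon\Pi_{q-p}(G_{p-i})\to\Pi_{q-p}(G_{p-i-1})$ is trivial. Applying the cohomotopy theory $\Pi_*$ to the coniveau octahedron \eqref{eq:octaedron-coniv}, and using \eqref{eq:cohtp_support} together with the restricted-product computation underlying \Cref{prop:coniveau_ssp_refined}, one identifies $\Pi_m(G_j)=\Pi_m(X^{>j})=\colim_{Z}\Pi_m(\sX,Z)$, the colimit over closed subschemes $Z\subset\sX$ of the appropriate codimension ordered by inclusion, and identifies $\bar\alpha$ with the map induced by the ``widen-the-support'' homomorphisms $i_*\colon\Pi_m(\sX,Z)\to\Pi_m(\sX,T)$ for $Z\subset T$ one codimension level lower. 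Since the source of $\bar\alpha$ is a filtered colimit, triviality of $\bar\alpha$ translates verbatim into an assertion of the form ``for every closed $Z$ of the prescribed codimension and every $\alpha\in\Pi_m(\sX,Z)$, there is a closed $T\supseteq Z$ of the next codimension level with $i_*\alpha=*$ in $\Pi_m(\sX,T)$'', which is precisely the shape of the statements in (iii). Running over $(p,i)\in I$ and reading off the degree $m=q-p$ together with the two codimension thresholds produced by $i=0$ and $i=1$ yields the list of effaceability conditions of (iii).

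\emph{Non-abelian subtleties and the main obstacle.} For the spots with $q-p\in\{0,1\}$ the exactness conditions of \Cref{df:coaugmented_cohtp_cpx} are not merely ``image $=$ kernel'' but also involve promoting ``trivial kernel'' to ``monomorphism'' and transitivity of $\Pi_1$-actions on $\Pi_0$-objects; I would handle these exactly as at the beginning of the proof of \Cref{thm:degeneracy}, via property (T3) of \Cref{num:1-topos} and the first/second additivity of the $t$-local theory $\Pi_*$, which secures the needed transitivity after passing to stalks. The step I expect to be the genuine obstacle is the bookkeeping in Step 2: matching the conditions of \Cref{thm:degeneracy} indexed by $I$ with the conditions of (iii) indexed by the pairs $(p,n)$, $n\in\{p,p+1\}$ — i.e. checking that the codimension levels produced by the pairs $(p,0)$ and $(p,1)$ in $I$ line up, after reindexing and after accounting for the off-by-one between ``codimension $>n$'' and ``codimension $\geq n$'', with the two levels appearing in (iii), so that the two lists of conditions coincide.
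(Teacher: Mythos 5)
Your proposal is correct and follows essentially the same route as the paper: the paper dismisses (i)$\Leftrightarrow$(ii) as obvious (your stalkwise argument is the intended one) and obtains (ii)$\Leftrightarrow$(iii) by reading condition (i) of \Cref{thm:degeneracy} through the dictionary $X_p\mapsto X^{\leq p}$, $F_p\mapsto X^{=p}$, $G_p\mapsto X^{>p}$ together with the filtered-colimit description $\Pi_m(X^{>j})=\colim_Z\Pi_m(\sX,Z)$, exactly as in your Step 2. The index bookkeeping you flag as the one remaining obstacle is likewise left implicit in the paper, whose proof consists of the single remark that (iii) is ``a simple translation'' of condition (i) of \Cref{thm:degeneracy}.
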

\begin{proof}
The equivalence of (i) and (ii) is obvious. 
 Then the equivalence between (ii) and (iii) follows\footnote{Use the following dictionary:
 $X_p$, $F_p$, $G_p$ replaced respectively by $X^{\leq p}$, $X^{=p}$, $X^{>p}$.} from \Cref{thm:degeneracy}
 as property (iii) above is a simple translation of property (i) in \emph{op. cit.}
\end{proof}

\begin{remark}
Property (iii) is a (slightly more precise) version of
 the \emph{effaceability condition} first highlighted by Bloch and Ogus
 in order to axiomatize the proof of Quillen (see \cite{BO}).
 The interest of the preceding proposition is that condition (iii) is not only sufficient
 but also necessary.
\end{remark}

\subsection{The abelian case and Eilenberg-MacLane sheaves}

\begin{notation}
In this section, we want to compare the unstable coniveau spectral sequence 
 with the more classical framework of the (stable) coniveau spectral sequence.
 We consider again the conventions of the preceding sections. 
 $X$ is an arbitrary scheme, with topology $t=\zar, \nis$.

Let $C$ be a complex of abelian sheaves on $X_t$.
 In \cite[Def. p. 277]{Hart66}, Hartshorne defines the Cousin complex
 associated with $C$, when $C$ is bounded and $t=\zar$.
 This construction has been, in an obvious way, extended to the case
 where $C$ is unbounded and $t=\zar,\nis$ in \cite[Def. 4.3.7, Th. 4.3.8]{DFJ22},
 by taking as a function $\delta=-\codim_X$ on points of $X$.\footnote{Beware
 that this is not in general a dimension function. Nevertheless, for the definitions
 and results of \emph{loc. cit.}, Section 4.3, one does not use this assumption.
 In fact, the only necessary condition is that if $x$ is a specialization of $y$,
 then $\delta(x)\leq \delta(y)$. In the latter case, we say that $\delta$
 is a weak dimension function.}

In fact, we can easily recall the definitions of \emph{loc. cit.}
 by using the exact method of \Cref{num:coniveau_ssp},
 up to changing the conventions on indexation.
 One first uses the cohomological functor on the stable $\infty$-category $\iDer(X_t,\Z)$
 with coefficients in the abelian category $\Sh(X_t,\Z)$:
$$
\uH^C:\iDer(X_t,\Z)^{\op} \rightarrow \Sh(X_t,\Z), D \mapsto \uH^0_t \uHom(D,C).
$$
As in \emph{loc. cit.}, it can be Kan-extended (by taking colimits)
 to pro-objects of $\iDer(X_t,\Z)$.
 Then one defines a cohomological exact couple (see \cite[1.1.1]{Deg11} for our conventions)
 in the abelian category $\Sh(X_t,\Z)$ 
\begin{align*}
^{st}\underline E_{1,c}^{p,q}(X_t,C)&=\uH^{p+q}_t\uHom(\Z(X^{=p}),C),  \\
^{st}\underline D_{1,c}^{p,q}(X_t,C)&=\uH^{p+q}_t\uHom(\Z(X^{\leq p-1}),C).
\end{align*}
This gives the $t$-local (stable) coniveau spectral sequence:
$$
^{st}\underline E_{1,c}^{p,q}(X_t,C) \Rightarrow \uH^{p+q}_t(C)
$$
The next definitions were given in \cite[Section 4.3]{DFJ22},
 again following \cite[Def. p. 247]{Hart66}.
\end{notation}
\begin{definition}
Consider the above notation.
 One defines the \emph{$t$-local Gersten complex} of the complex $C$
 as the following complex of abelian sheaves on $X_t$:
$$
\Ge^*(X,C,0):= {^{st}\underline E}_{1,c}^{*,0}(X_t,C).
$$
We say that $C$ is \emph{Cohen-Macaulay} on $X_t$ if the preceding $t$-local coniveau spectral sequence
 is concentrated on the line $q=0$.

When $C=F[0]$ is concentrated in degree $0$,
 we will put $\Cz_t^*(F)=\Ge^*_t(F[0])$. This is the \emph{$t$-local Cousin complex} associated with the sheaf $F$
 on $X_t$ (with respect to the codimension filtration of $X$).
\end{definition}
Note that the complex $\Ge^*(X_t,C,0)$ is in fact a (non-truncated) $t$-local Cousin complex made of abelian components
 in the sense of \Cref{df:Cousin-cpx}.

\begin{remark}
When $t=\zar$ (resp. $t=\nis$),
 and $C=F[0]$ is a single sheaf placed in degree $0$,
 the above definition coincides with that of \cite[Def. p. 247]{Hart66}
 (resp. \cite[Def. 4.3.6]{DFJ22}).
 Recall moreover that $F$ is Cohen-Macaulay on $X_t$ if and only if the canonical augmentation map
$$
F \rightarrow \Cz^*_t(F)
$$
is a quasi-isomorphism of $t$-sheaves.
\end{remark}

The preceding property has several remarkable consequences.
\begin{proposition}
Let $C$ be a Cohen-Macaulay complex of abelian sheaves on $X_t$.
 
Then the $t$-local coniveau spectral sequence converges. The edge morphisms of this spectral sequence
 induce for any $p\geq 0$ an isomorphism $\uH^p(C) \simeq \uH^p(\Ge^*(X,C,0))$ of abelian sheaves on $X_t$.
 Morevoer, the edge morphisms of the usual coniveau spectral sequence ${}^{st}E_{1,c}^{**}(X,C)$
 associated with the cohomology theory with coefficients in $C$ induces for any integer $p \in \Z$ an isomorphism:
$$
H^p\big(\Gamma(X,\Ge^*(X,C,0))\big)={}^{st}E_{2,c}^{p,0}(X,C) \simeq H^p(X_t,C).
$$
Finally, all these isomorphisms can be lifted to a quasi-isomorphism of complexes
 $C \rightarrow \Ge^*(X,C,0)$.
\end{proposition}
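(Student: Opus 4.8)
The plan is to read off all four assertions from the concentration of the Cohen--Macaulay spectral sequence on a single row, using the $\Gamma(X,-)$-acyclicity of the terms of the Gersten complex to pass from sheaves to global sections, exactly as in \cite[\textsection IV]{Hart66} and \cite[\textsection 4.3]{DFJ22}.

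First I would treat the $t$-local coniveau spectral sequence ${}^{st}\underline E^{p,q}_{1,c}(X_t,C)\Rightarrow\uH^{p+q}_t(C)$. Since $C$ is Cohen--Macaulay its $E_2$-page is concentrated on the row $q=0$, and for every $r\geq 2$ the differential $d_r$ has source or target in a row $q\neq 0$; hence it degenerates at $E_2$, and as the abutment filtration on $\uH^n_t(C)$ then has a unique nonzero graded piece $E_\infty^{n,0}=E_2^{n,0}$, it converges. By construction $\big({}^{st}\underline E^{*,0}_{1,c}(X_t,C),d_1\big)=\Ge^*(X,C,0)$, so $E_2^{p,0}=\uH^p(\Ge^*(X,C,0))$; combined with degeneration, the bottom-row edge morphism gives the isomorphism $\uH^p(C)=\uH^p_t(C)\xrightarrow{\ \sim\ }\uH^p(\Ge^*(X,C,0))$, which is the first two assertions.

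Next I would pass to global sections. Each term $\Ge^p(X,C,0)=\bigoplus_{x\in X^{(p)}}x_*(M_x)$ is acyclic for $\Gamma(X,-)$ (flasque when $t=\zar$, and $\Gamma(X,-)$-acyclic when $t=\nis$ by \cite[\textsection 4.3]{DFJ22}), and more generally so is every term ${}^{st}\underline E^{p,q}_{1,c}(X_t,C)$. Via the local-to-global spectral sequence this identifies ${}^{st}E^{p,q}_{1,c}(X,C)$ with $\Gamma(X,{}^{st}\underline E^{p,q}_{1,c}(X_t,C))$ termwise and compatibly with $d_1$, whence ${}^{st}E^{p,0}_{2,c}(X,C)=H^p\big(\Gamma(X,\Ge^*(X,C,0))\big)$. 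Moreover, for fixed $q$ the complex ${}^{st}\underline E^{*,q}_{1,c}(X_t,C)$ is a complex of $\Gamma(X,-)$-acyclics whose cohomology sheaves ${}^{st}\underline E^{*,q}_{2,c}(X_t,C)$ vanish for $q\neq 0$ by Cohen--Macaulayness; hence $H^p(\Gamma(X,-))$ of it vanishes for $q\neq 0$, i.e.\ the global coniveau spectral sequence also degenerates at $E_2$ on the row $q=0$, and convergence then gives ${}^{st}E^{p,0}_{2,c}(X,C)=E^{p,0}_{\infty}=H^p(X_t,C)$. Stringing these equalities together yields the displayed isomorphisms.

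Finally, for the lift I would invoke, following \cite[Ch.~IV]{Hart66} and \cite[\textsection 4.3, Th.~4.3.8]{DFJ22}, the Cousin augmentation: the coniveau filtration on an injective resolution of $C$ produces a canonical morphism of complexes $\iota:C\to\Ge^*(X,C,0)$, generalizing the map $F\to\Cz^*_t(F)$ for a single sheaf. The second paragraph shows $\iota$ induces an isomorphism on every cohomology sheaf, hence is a quasi-isomorphism, completing the proof. The main obstacle is the bookkeeping linking the sheaf-level and the global spectral sequences; the essential input that makes it go through is the $\Gamma(X,-)$-acyclicity of the Cousin-type sheaves, which turns hypercohomology into cohomology of global sections and lets the sheaf-level degeneration descend to the global level.
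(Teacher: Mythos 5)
Your proposal is correct and follows essentially the same route as the paper: the paper disposes of the first three assertions as ``consequences of the definitions'' (i.e.\ the degeneration on the row $q=0$ and the $\Gamma(X,-)$-acyclicity of the Cousin-type terms that you spell out), and obtains the final quasi-isomorphism by citing the Hartshorne--Suominen theorem as extended in \cite[Th.~4.3.8]{DFJ22}, which is exactly the Cousin augmentation you invoke. Your write-up simply supplies the details the paper leaves implicit.
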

The first three assertions are consequences of the definitions.
 The last one is actually the generalized version of a theorem of Hartshorne-Suominen
 extended to the Nisnevich (and the Zariski) topology in \cite[Th. 4.3.8]{DFJ22}.

\begin{notation}
Consider the above notation, but assume in addition that $C$ is concentrated in non-negative degree.
 In that case,
 one defines the Eilenberg-Mac Lane simplicial $t$-sheaf $K(C)$ on $X_t$, which is an object of $\iSh(X_t)$,
 using the (toposic) Dold-Kan correspondence.
 The main case comes from an abelian sheaf $F$ on $X_t$
 and an integer $n \geq 0$; one puts: $K(F,n)=K(F[n])$.

The next proposition sheds light on the fact that unstable spectral sequences are \emph{fringed}.
\end{notation} 
\begin{proposition}
Consider the above notation.
 Then for any integers $p \leq q$, there exists a canonical isomorphism:
$$
\uE_{1,c}^{p,q}(X,K(C)) \simeq {}^{st}\underline{E}_{1,c}^{p,-q}(X,C).
$$
In fact, for any $q \geq 0$, there exists an isomorphism of cohomotopical complexes:
$$
\uE_{1,c}^{*,q}(X,K(C)) \simeq \tau^{\leq q}_{nv}\left({}^{st}\underline{E}_{1,c}^{*,-q}(X,C)\right)
$$
where $\tau^{\leq q}_{nv}$ denotes the naive truncation functor in cohomological degree less or equal to $q$.
\end{proposition}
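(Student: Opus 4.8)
The plan is to reduce both spectral sequences to the single complex $R\uHom(\Z(X^{=p}),C)$ in the derived category of abelian $t$-sheaves, and then to check that the two $d_1$-differentials are induced by the same connecting maps. The starting point is the Eilenberg--MacLane computation: for a pointed object $\cY$ of $\iSh(X_t)$ (or a pro-object thereof) there is a natural isomorphism $\pi_n\Map_*(\cY,K(C))\simeq\mathbb H^{-n}(\cY;C)$ with the (reduced) hypercohomology, which is functorial in $\cY$, carries homotopy cofiber sequences in $\cY$ to the long exact sequence of hypercohomology, and commutes with filtered colimits; this is the toposic Dold--Kan correspondence together with the adjunction relating the shift and $K(-)$, and after replacing $\cY$ by $S^n\wedge\cY$ it is just the identity $[\cY,K(C)]\simeq\mathbb H^0(\cY;C)$. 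Sheafifying the presheaf $V\mapsto\pi_n\Map_*(\cY|_V,K(C))$ over the small site $X_t$, and using that $\uH^m_t\uHom(-,C)$ is by definition $\mathcal H^m$ of the derived internal $\Hom$ complex $R\uHom(-,C)$, we obtain $\uPi^{\cY}_n\simeq\uH^{-n}_t\uHom(\Z(\cY),C)$. Taking $\cY=X^{=p}$ and $n=q-p$ gives
$$
\uE_{1,c}^{p,q}(X,K(C))=\uPi^{X^{=p}}_{q-p}\ \simeq\ \uH^{p-q}_t\uHom(\Z(X^{=p}),C)={}^{st}\underline E_{1,c}^{p,-q}(X,C),
$$
which is the first assertion. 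Since $K(C)$ is a simplicial \emph{abelian} $t$-sheaf, $\Map_*(\cY,K(C))$ is a simplicial abelian group, so all the unstable $E_1$-terms are already abelian $t$-sheaves, even in the degrees $q-p=0,1$ where the general formalism only provides a pointed sheaf or a sheaf of groups; the comparison thus takes place in $\Sh(X_t,\Z)$.

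Next I would match the differentials. Both $\uE_{1,c}^{*,q}(X,K(C))$ and ${}^{st}\underline E_{1,c}^{*,\bullet}(X,C)$ arise from the octahedron diagram \eqref{eq:octaedron-coniv} of the coniveau tower by applying, respectively, the cohomotopy functor $\Pi^{K(C)}_*$ and the cohomological functor $D\mapsto\uH^\bullet_t\uHom(D,C)$, and then $t$-sheafifying as in \Cref{num:t-local-coniv-ssp}. Under the identification above, the homotopy long exact sequences of \eqref{eq:double-ec-coniv} --- sheafified as in \eqref{eq:double-ec-coniv-sheaf} --- correspond termwise to the cohomology long exact sequences of the stable coniveau exact couple, and the unstable boundary operators $\beta$ (the double arrows in the sense of \Cref{df:long_htp_seq}) correspond to the stable connecting homomorphisms. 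As $d_1=\gamma\circ\beta$ on the unstable side and the stable $d_1$ is the analogous composite, the two differentials agree up to the sign recorded in \eqref{eq:relation-differentials}, which is absorbed into the isomorphism sheaf-by-sheaf --- harmless since all objects are abelian.

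Finally, the $q$-th line of the unstable coniveau spectral sequence is, by construction, a $q$-truncated cohomotopical complex, with terms $\uE_{1,c}^{p,q}(X,K(C))$ only for $0\le p\le q$ (because $\pi_{q-p}$ is undefined for $p>q$), whereas ${}^{st}\underline E_{1,c}^{*,-q}(X,C)$, with $p$-th term $\mathcal H^{p-q}(R\uHom(\Z(X^{=p}),C))$, is defined and in general nonzero for all $p\ge 0$. Comparing the two termwise via the first step and via the matching of differentials $d_1^{p,q}$ for $0\le p<q$, we conclude that $\uE_{1,c}^{*,q}(X,K(C))$ is exactly the brutal truncation $\tau^{\le q}_{nv}$ of ${}^{st}\underline E_{1,c}^{*,-q}(X,C)$, which is the second assertion. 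The main difficulty here is bookkeeping rather than conceptual: one must carefully align the Bousfield--Kan indexing of the unstable spectral sequence with the cohomological indexing of the stable one --- the source both of the sign flip $q\mapsto -q$ and of the appearance of the naive truncation --- and verify that first passing to pro-objects over $\flag(X)$ and then to $t$-sheaves is compatible on the two sides, which reduces to the exactness of $t$-sheafification and of filtered colimits and to the fact that these commute with $\mathcal H^{*}(R\uHom(-,C))$ in the relevant range of degrees.
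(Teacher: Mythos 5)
Your argument is correct and follows essentially the same route as the paper: the paper's proof is precisely the Dold--Kan identification $\pi_n\Map(\cY,K(C))\simeq H^{-n}_t(\cY,C)$, its compatibility with the long exact sequences of cohomotopy/cohomology with supports, and then unwinding definitions, which is exactly what you carry out (in rather more detail, including the indexing bookkeeping that produces the sign flip $q\mapsto -q$ and the naive truncation).
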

The proof is now just a matter of checking definitions as, from the Dold-Kan correspondence,
 one gets: $\pi_n\Map(X,K_t(C)) \simeq H^{-n}_t(X,C)$ and this isomorphism is compatible
 with the long exact sequences of cohomotopy (resp. cohomology) with supports.

\begin{corollary}
Consider the above notation. Let $F$ be an abelian sheaf on $X_t$, and $n \geq 0$ be an integer.

Then for any integer $q\geq 0$, there exists an isomorphism of augmented (by $F$) cohomotopical complexes
$$
\Ge^*(X_t,K(F,n),q) \simeq \tau^{\leq q}_{nv}\left(\Cz^*_t(X,F[n-q])\right).
$$
In particular, if $X$ is finite dimensional and $q\geq \dim(X)$,
 one gets an isomorphism of augmented cohomotopical complexes
$$
\Ge^*(X_t,K(F,n),q) \simeq \Cz^*_t(F[n-q]).
$$
\end{corollary}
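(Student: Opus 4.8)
The plan is to read this off from the preceding proposition applied to the complex $C = F[n]$, after rewriting its right-hand side ${}^{st}\underline E_{1,c}^{*,-q}(X,F[n])$ as a Cousin complex. The only new ingredient needed is the (obvious) shift-invariance of the stable $t$-local coniveau spectral sequence: from the defining formula ${}^{st}\underline E_{1,c}^{p,q}(X_t,C)=\uH^{p+q}_t\uHom(\Z(X^{=p}),C)$ and the identification $\uHom(\Z(X^{=p}),C[m]) \simeq \uHom(\Z(X^{=p}),C)[m]$, so that $\uH^{p+q}_t$ of the former equals $\uH^{p+q+m}_t$ of $\uHom(\Z(X^{=p}),C)$, one obtains a canonical isomorphism of cohomological exact couples, hence of the associated spectral sequences and their $q$-lines,
$$
{}^{st}\underline E_{1,c}^{p,q}(X_t,C[m]) \simeq {}^{st}\underline E_{1,c}^{p,q+m}(X_t,C).
$$
Specializing to $C = F[n]$ and $m=-q$, and using the definition $\Cz^*_t(X,-)=\Ge^*(X,-,0)={}^{st}\underline E_{1,c}^{*,0}(X_t,-)$ (extended to complexes), this gives
$$
\Cz^*_t(X,F[n-q]) = {}^{st}\underline E_{1,c}^{*,0}(X_t,F[n][-q]) \simeq {}^{st}\underline E_{1,c}^{*,-q}(X_t,F[n]).
$$

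Next I would invoke the preceding proposition with $C=F[n]$, so that $K(C)=K(F,n)$ and $\Ge^*(X_t,K(F,n),q)=\uE_{1,c}^{*,q}(X,K(F,n))$; it supplies $\uE_{1,c}^{*,q}(X,K(F,n)) \simeq \tau^{\leq q}_{nv}\big({}^{st}\underline E_{1,c}^{*,-q}(X,F[n])\big)$, and combining with the display above produces the first claimed isomorphism. To promote it to an isomorphism of augmented complexes I would trace the proof of the preceding proposition: on the Gersten side the augmentation has source $\uPi_q$, which under the Dold--Kan identification $\pi_q\Map(-,K(F,n)) \simeq H^{n-q}_t(-,F)$ used there is the sheaf $\uH^{n-q}_t(F)$ (equal to $F$ precisely when $q=n$); on the Cousin side the augmentation is the coniveau edge map out of $\uH^0_t(F[n-q]) = \uH^{n-q}_t(F)$; and these match because that Dold--Kan identification is compatible with the long exact sequences of cohomotopy resp.\ cohomology with supports, which is the very mechanism behind the preceding proposition. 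Finally, for the ``in particular'' statement I would use that a $t$-local Cousin complex has its degree-$p$ component supported in codimension-$p$ points, hence vanishes in degrees $>\dim(X)$; thus when $X$ is finite dimensional and $q\geq\dim(X)$ the naive truncation $\tau^{\leq q}_{nv}$ acts as the identity on $\Cz^*_t(X,F[n-q])$, yielding $\Ge^*(X_t,K(F,n),q)\simeq\Cz^*_t(F[n-q])$.

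There is no real obstacle: the one thing demanding care is the index bookkeeping --- the minus sign in the bidegree $(*,-q)$, the passage between the homological indexing of the unstable spectral sequence and the cohomological indexing of the stable one, and the shift $[n-q]$ --- so that the shift-invariance isomorphism lands in exactly the bidegree that the preceding proposition produces. Everything else is formal manipulation of the structures already established.
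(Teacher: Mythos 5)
Your proposal is correct and follows the route the paper intends: the corollary is an immediate consequence of the preceding proposition applied to $C=F[n]$, combined with the (definitional) shift-invariance ${}^{st}\underline E_{1,c}^{p,0}(X_t,F[n-q])\simeq{}^{st}\underline E_{1,c}^{p,-q}(X_t,F[n])$ and the vanishing of Cousin terms in degrees above $\dim(X)$ for the truncation claim. Your added care with the augmentations (noting the source is $\uH^{n-q}_t(F)$, equal to $F$ exactly when $q=n$) and the index bookkeeping matches what the paper leaves implicit.
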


\begin{corollary}
Consider the above notation and the following conditions:
\begin{enumerate}[label=\arabic*]
\item $F$ is Cohen-Macaulay on $X_t$.
\item $K(F,n)$ is unstably Cohen-Macaulay in degree $n$ on $X_t$.
\end{enumerate}
Then (i) implies (ii). Moreover in that case, the unstable coniveau spectral
 sequence $E_{1,c}^{**}(X,K(F,n))$ is concentrated on the line $q=n$.

Assume that $n \geq \dim(X)$. Then (ii) implies (i),
 the cohomotopical complex $\Ge^*(X_t,K(F,n),n)$ is a complex of abelian sheaves
 and a cohomological resolution of $F$.
 In fact, there exists a canonical isomorphism of cohomological resolutions of $F$:
$$
\Ge^*(X_t,K(F,n),n) \simeq \Cz^*_t(X,F).
$$
\end{corollary}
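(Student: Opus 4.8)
The plan is to reduce everything formally to the two results recorded just above: the comparison isomorphism $\uE_{1,c}^{*,q}(X,K(C))\simeq\tau^{\leq q}_{nv}\bigl({}^{st}\underline E_{1,c}^{*,-q}(X,C)\bigr)$ of cohomotopical complexes, and its specialisation, for $C=F[n]$ and with the augmentation by $F$, which I will call the \emph{specialised comparison}: $\Ge^*(X_t,K(F,n),q)\simeq\tau^{\leq q}_{nv}\bigl(\Cz^*_t(X,F[n-q])\bigr)$. Granting these, the only genuine work is to match the non-abelian notion of exactness of a $q$-truncated (bi-)augmented cohomotopical complex (\Cref{unstable-cx}, \Cref{df:coaugmented_cohtp_cpx}) with the classical statement that $F\to\Cz^*_t(F)$ is a quasi-isomorphism, and to track where the hypothesis $n\geq\dim(X)$ is used.

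First I would prove $(1)\Rightarrow(2)$ together with the concentration statement. If $F$ is Cohen–Macaulay on $X_t$, the stable coniveau spectral sequence of $F[0]$ has $E_1$-page supported on the line $q=0$ by definition; since the shift identity gives $\Cz^p_t(F[m])={}^{st}\underline E_{1,c}^{p,0}(X_t,F[m])\simeq{}^{st}\underline E_{1,c}^{p,m}(X_t,F[0])$, we get $\Cz^*_t(F[m])=0$ for all $m\neq0$. Feeding this into the specialised comparison gives $\Ge^*(X_t,K(F,n),q)=0$ for $q\neq n$; the non-sheafy $E_{1,c}^{*,q}(X,K(F,n))$ is computed by the same stalk-wise formula of \Cref{cor:coniveau_ssp_refined}, and Cohen–Macaulayness of $F$ forces the cohomology with support of $F$ on the local scheme $X^t_{(x)}$ to vanish outside degree $\codim(x)$, so it too vanishes for $q\neq n$; this is the asserted concentration on the line $q=n$. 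On the line $q=n$ the specialised comparison identifies the augmented complex $\uPi_n^{K(F,n)}\to\Ge^*(X_t,K(F,n),n)$, i.e.\ $F\to\Ge^*(X_t,K(F,n),n)$, with $F\to\tau^{\leq n}_{nv}\bigl(\Cz^*_t(F)\bigr)$. Since $F$ is Cohen–Macaulay, $F\to\Cz^*_t(F)$ is an exact augmented complex, and naively truncating an exact augmented complex keeps it exact in the sense of \Cref{df:coaugmented_cohtp_cpx}: truncation only discards the conditions strictly above the new top degree, and the notion of exactness imposes none at the top anyway. This gives $(2)$ (and, via \Cref{prop:carac-Gersten} and \Cref{df:Gersten-ppty}, its spectral-sequence reformulation).

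Next I would prove $(2)\Rightarrow(1)$ under $n\geq\dim(X)$. Here $\Cz^*_t(F)$ is concentrated in cohomological degrees $\leq\dim(X)\leq n$, so $\tau^{\leq n}_{nv}$ acts trivially on it and the specialised comparison becomes an isomorphism of augmented cohomotopical complexes $\Ge^*(X_t,K(F,n),n)\simeq\Cz^*_t(X,F)$. This is already the last displayed isomorphism of the statement, and it exhibits $\Ge^*(X_t,K(F,n),n)$ as a complex of abelian sheaves, the Cousin complex of an abelian sheaf. Transporting hypothesis $(2)$ — that the (bi-)augmented Gersten complex in degree $n$ is exact — along this isomorphism says that the (bi-)augmented complex $F\to\Cz^*_t(F)$ is exact; because it is supported in degrees $\leq n$, exactness at every spot below the top together with the co-augmentation condition at the top degree $\dim(X)$ (which is vacuous as soon as $n>\dim(X)$, and is supplied by the co-augmentation when $n=\dim(X)$) says precisely that $F\to\Cz^*_t(F)$ is a quasi-isomorphism, i.e.\ that $F$ is Cohen–Macaulay on $X_t$.

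The hard part will be exactly this translation between the two formalisms: keeping straight that the non-abelian notion of exactness of a $q$-truncated cohomotopical complex deliberately imposes no condition at its top degree, that the co-augmentation $\epsilon$ reinstates one there, and that under $n\geq\dim(X)$ these two ingredients recombine into ``$F\to\Cz^*_t(F)$ is a quasi-isomorphism''. There is also a small point of hygiene, namely that a cohomotopical complex which is isomorphic to a complex of abelian sheaves genuinely is one, so that the pointed-set/group distinctions in low codimension disappear; but once the two comparison isomorphisms above are in hand, no further computation is needed.
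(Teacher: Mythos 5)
Your proposal is correct and follows the same route the paper intends: the corollary is stated without proof as an immediate consequence of the two comparison isomorphisms recorded just before it, and your argument is exactly the careful unwinding of those isomorphisms together with the translation between non-abelian exactness of truncated (co)augmented cohomotopical complexes and the quasi-isomorphism $F\to\Cz^*_t(F)$. Your attention to the boundary case $n=\dim(X)$ (where exactness at the top degree is not automatic and must come from the co-augmentation, as the paper itself does via \Cref{prop:q-large} in the following proposition) is the right point to flag.
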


The next proposition and its corollary are the abstract version
 for the extension of Bloch-Ogus theory in the unstable setting,
 taken into account \Cref{prop:carac-Gersten}.
 It also relates the \emph{Gersten setting} with the more classical
 Cousin setting as formalized by Grothendieck (see again \cite{Hart66}).
\begin{proposition}\label{prop:truncated-Cousin}
Let $X$ be a scheme, $t=\zar, \nis$ and $\Pi_*$ be a $t$-local cohomotopy theory with supports.
 Assume that $\Pi_*$ is $t$-locally Gersten in degree $q>1$.

Then $\Ge^*(X_t,\Pi_*,q)$ is a $q$-truncated $t$-Cousin complex augmented by 
 the abelian sheaf $\uPi_q$ on $X_t$.
 There exists a unique isomorphism of complex augmented by $\uPi_q$:
$$
\tau_{nv}^{\leq q}\Ge^*(X_t,\Pi_*,q) \simeq \tau_{nv}^{\leq q}\Cz_t^*(\uPi_q)
$$
where the right hand-side is the $t$-Cousin complex associated with the abelian sheaf $\uPi_q$.
 In particular, $\Ge^*(X_t,\Pi_*,q)$ is in fact a complex of abelian groups,
 and for any integer $0 \leq p \leq q$, and $x \in X^{(p)}$, there exists a canonical
 isomorphism of abelian groups for $p<q-1$, groups for $p=q-1$ and pointed sets for $q=p$:
$$
\Pi_{q-p}^x(X_{(x)}^t) \simeq  H^p_x(X_{(x)}^t,\uPi_q).
$$
Finally, the preceding isomorphism induces an isomorphism for all $0 \leq p <q$:
$$
E_{2,c}^{p,q}(X,\Pi_*) \simeq H^p\Gamma\big(X,\Cz_t^*(\uPi_q)\big)
 \simeq H^p(X_t,\uPi_q).
$$
\end{proposition}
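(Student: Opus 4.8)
The plan is to reduce every assertion to a local computation on the schemes $X^t_{(x)}$ and then to invoke the uniqueness of ($t$-local, possibly non-abelian) Cousin complexes in the style of Grothendieck--Hartshorne \cite[\S IV]{Hart66}, in the form extended in \cite[\S 4.3]{DFJ22}. First I would dispose of the easy part. Since $q>1$, the sheaf $\uPi_q$ is abelian by \Cref{df:pi*-structure}, and so is $\Ge^0(X_t,\Pi_*,q)$; hence the canonical augmentation $\tau\colon\uPi_q\to\Ge^0(X_t,\Pi_*,q)$ recalled after \Cref{df:Gersten-ppty} is a morphism of abelian sheaves, and by \Cref{lm:ssp=Cousin} the pair $(\Ge^*(X_t,\Pi_*,q),\tau)$ is a $q$-truncated $t$-local Cousin complex augmented by $\uPi_q$. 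The hypothesis that $\Pi_*$ is Gersten in degree $q$ says precisely that this augmented complex is exact in the sense of \Cref{df:coaugmented_cohtp_cpx}, i.e. exact at the cohomotopical positions $0,\dots,q-1$.

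Second, I would run the local computation. Fix $x\in X^{(p)}$ with $0\le p<q$ and set $\sX=X^t_{(x)}$, a $t$-local scheme of dimension $p$ with closed point $x$. By the functoriality \eqref{eq:Gersten-functorial} and $t$-locality, the restriction of the Gersten complex to $\sX_t$ has the same terms $\Ge^{p'}(\sX_t,\Pi_*,q)={\prod}'_{y\in\sX^{(p')}}y_*\bigl(\Pi^y_{q-p'}(\sX^t_{(y)})\bigr)$, which vanish for $p'>p=\dim\sX$. Exactness at position $p$ then forces the differential out of position $p-1$ to be an epimorphism, and combined with the centrality in axiom (4) of \Cref{df:long_htp_seq} this shows that whenever the position-$(q-1)$ term occurs it is abelian, so
\[
0\to\uPi_q|_\sX\to\Ge^0(\sX_t,\Pi_*,q)\to\cdots\to\Ge^p(\sX_t,\Pi_*,q)\to 0
\]
is an exact complex of $t$-flasque sheaves, each term being a restricted product of pushforwards $y_*(-)$ from points of $\sX$. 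Applying the sections-with-support functor $\Gamma_x(\sX,-)$ annihilates every summand sitting at a point $y\ne x$, so the complex collapses to $\Pi^x_{q-p}(\sX)$ placed in degree $p$; hence $H^i_x(\sX,\uPi_q)=0$ for $i\ne p$ and there is a canonical isomorphism $\Pi^x_{q-p}(X^t_{(x)})\simeq H^p_x(X^t_{(x)},\uPi_q)$. In particular these groups are abelian, and $\Ge^p(X_t,\Pi_*,q)=\bigoplus_{x\in X^{(p)}}x_*\bigl(H^p_x(X^t_{(x)},\uPi_q)\bigr)=\Cz^p_t(\uPi_q)$ for all $0\le p\le q-1$.

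The step requiring the most care, which I expect to be the main obstacle, is the top degree $p=q$: there the position-$q$ term is a priori only a pointed sheaf of sets carrying a group action, and the Gersten complex need not be exact at position $q$. To handle it I would invoke \Cref{prop:q-large}. For $x\in X^{(q)}$ and $\sX=X^t_{(x)}$, now of dimension $q$, the coniveau spectral sequence over $\sX$ essentially collapses on line $q$ by \Cref{prop:carac-Gersten}, so \Cref{prop:q-large}, applied with $d=\dim\sX=q$ and in degree $q\ge d-1$, yields exactness of the biaugmented complex \eqref{eq:augmented_hcpx2} over $\sX$. Since $\sX^{\le q}=\sX^{\le q+1}=\sX$, the coaugmentation target there is the trivial object, hence the orbit of the base point under the action $d^{q-1}$ exhausts $\Ge^q(\sX_t,\Pi_*,q)=x_*(\Pi^x_0(\sX))$; consequently $0\to\uPi_q|_\sX\to\Ge^0(\sX_t,\Pi_*,q)\to\cdots\to\Ge^q(\sX_t,\Pi_*,q)\to 0$ is again an exact complex of $t$-flasque sheaves, and the argument of the previous paragraph gives $\Pi^x_0(X^t_{(x)})\simeq H^q_x(X^t_{(x)},\uPi_q)$, which in particular inherits an abelian group structure. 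Thus $\Ge^*(X_t,\Pi_*,q)$ is a complex of abelian sheaves with $\Ge^p=\Cz^p_t(\uPi_q)$ for every $0\le p\le q$.

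Finally I would globalize. The term-wise identifications assemble into an isomorphism of augmented complexes $\tau^{\le q}_{nv}\Ge^*(X_t,\Pi_*,q)\simeq\tau^{\le q}_{nv}\Cz^*_t(\uPi_q)$; its existence and uniqueness follow from the inductive construction of the comparison map in the uniqueness theorem for Cousin complexes \cite[\S IV]{Hart66}, \cite[\S 4.3]{DFJ22} (built degree by degree from the augmentation and the exactness at positions $0,\dots,q-1$), which also subsumes the routine verification that the Gersten differentials match the Cousin ones. For the last assertion, $E^{p,q}_{2,c}(X,\Pi_*)$ is by definition the cohomology in position $p$ of the global Gersten complex $\Gamma(X,\Ge^*(X_t,\Pi_*,q))$, so the complex isomorphism identifies it with $H^p\Gamma(X,\Cz^*_t(\uPi_q))$ for $p<q$; and the local vanishing $H^i_x(X^t_{(x)},\uPi_q)=0$ for $i\ne\codim(x)\le q$ established above says precisely that $\uPi_q\to\Cz^*_t(\uPi_q)$ is a $t$-flasque resolution through degree $q-1$, whence $H^p\Gamma(X,\Cz^*_t(\uPi_q))\simeq H^p(X_t,\uPi_q)$ for all $p\le q-1$.
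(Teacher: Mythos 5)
Your argument is correct in outline and ultimately rests on the same key ingredient as the paper's (very terse) proof, namely the uniqueness theorem for $q$-truncated $t$-local unstable Cousin complexes, \Cref{thm:unstable-unique-cousin}: the paper simply cites \Cref{lm:ssp=Cousin} for the first claim, that uniqueness theorem for the comparison isomorphism, and $t$-flasqueness of both complexes for the $E_2$-computation. What you add is an explicit local verification of the term-wise identifications $\Pi^x_{q-p}(X^t_{(x)})\simeq H^p_x(X^t_{(x)},\uPi_q)$, by restricting to $\sX=X^t_{(x)}$ and applying $\Gamma_x$ to a flasque resolution. For $p<q$ this is sound — including your observation that over a base of dimension $q-1$ the differential $d^{q-2}$ becomes surjective and its image is central, forcing $\Ge^{q-1}$ to be abelian there — and it makes concrete what the paper leaves as an abstract consequence of uniqueness.

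The one step that does not work as written is the top degree $p=q$. Over $\sX=X^t_{(x)}$ of dimension $q$, the last term $\Ge^q(\sX_t,\Pi_*,q)$ is a priori only a pointed sheaf carrying a $\Ge^{q-1}$-action (and $\Ge^{q-1}$ need not be abelian there, since exactness at position $q-1$ only identifies $\im(d^{q-2})$ with the stabilizer of the base point, not with all of $\Ge^{q-1}$). Hence $\uPi_q|_\sX\to\Ge^*(\sX_t,\Pi_*,q)$ is not a resolution by abelian sheaves, applying $\Gamma_x$ to it does not compute the abelian-sheaf cohomology $H^q_x(\sX,\uPi_q)$, and the assertion that $\Pi^x_0(X^t_{(x)})$ thereby ``inherits an abelian group structure'' is circular. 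This is exactly the point \Cref{thm:unstable-unique-cousin} is designed to handle: its final inductive step produces the degree-$q$ comparison as a map of pointed sheaves via \Cref{lem:unstable-skeleton} and then upgrades it to an isomorphism, after which the abelian structure on $\Ge^q$ is transported from $\Cz^q_t(\uPi_q)$ — consistent with the statement, which only claims an isomorphism of pointed sets at $p=q$. Since your final paragraph already invokes that inductive construction for the existence and uniqueness of the global isomorphism, the proof is repaired by deleting the faulty local computation at $p=q$ and reading off the $p=q$ identification from the unique comparison isomorphism instead.
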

\begin{proof}
The first statement is \Cref{lm:ssp=Cousin}.
 The second statement follows from the uniqueness of $q$-truncated cohomotopical Cousin complexes
 (see Appendix: \Cref{thm:unstable-unique-cousin}).
 The isomorphism then follows, as both complexes are $t$-flasque.
\end{proof}

In particular, if $q>\dim X$,
 we deduce from the proposition that $\uPi_q$ is Cohen-Macaulay.
 Using the finer study of the unstable spectral sequence, we can slightly improve that result.
\begin{proposition}
Consider the assumptions of the previous proposition and assume in addition that $q \geq \dim(X)$.
 Then the abelian sheaf $\uPi_q$ over $X_t$ (obtained by sheafification of $\Pi_q$) is Cohen-Macaulay,
 there exists a unique \emph{isomorphism} of resolutions of $\uPi_q$
$$
\Ge^*(X_t,\Pi_*,q) \simeq \Cz^*_t(\uPi_q)
$$
and for all $0 \leq p \leq q$,
$$
E_{2,c}^{p,q}(X,\pi_*) \simeq H^p(X_t,\uPi_q).
$$
\end{proposition}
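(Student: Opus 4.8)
The plan is to observe that, once $q\geq\dim(X)$, the naive truncation $\tau_{nv}^{\leq q}$ appearing in \Cref{prop:truncated-Cousin} does nothing, so that all three assertions drop out of that proposition together with the Gersten hypothesis and a short flasqueness argument.

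First I would record the reductions. Since $q$ is a finite integer with $q\geq\dim(X)$, the scheme $X$ has dimension $d=\dim(X)\leq q<\infty$, so that $X^{(p)}=\varnothing$ for $p>d$; consequently both the homotopical Gersten complex $\Ge^*(X_t,\Pi_*,q)$ (by \Cref{cor:coniveau_ssp_refined}) and the $t$-local Cousin complex $\Cz^*_t(\uPi_q)$ vanish in all degrees $>d$, hence a fortiori in all degrees $>q$. Therefore $\tau_{nv}^{\leq q}$ leaves both complexes unchanged, and \Cref{prop:truncated-Cousin} provides --- without any truncation --- the unique isomorphism of complexes augmented by $\uPi_q$
$$
\Ge^*(X_t,\Pi_*,q)\simeq\Cz^*_t(\uPi_q),
$$
which in particular exhibits $\Ge^*(X_t,\Pi_*,q)$ as a complex of abelian sheaves.

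Next I would feed in the Gersten property. By \Cref{df:Gersten-ppty} (see also \Cref{prop:carac-Gersten}) the augmented complex $\uPi_q\to\Ge^*(X_t,\Pi_*,q)$ is exact; transporting this exactness through the isomorphism above shows that $\uPi_q\to\Cz^*_t(\uPi_q)$ is a resolution, which is exactly the statement that $\uPi_q$ is Cohen-Macaulay on $X_t$ (by the characterisation recalled above, that the canonical augmentation $F\to\Cz^*_t(F)$ be a quasi-isomorphism). The displayed map is then an isomorphism of resolutions of $\uPi_q$, whose uniqueness I would take from \Cref{thm:unstable-unique-cousin} (uniqueness of $q$-truncated cohomotopical Cousin complexes, which here is genuine uniqueness of Cousin complexes since $q\geq d$), equivalently from the uniqueness already asserted in \Cref{prop:truncated-Cousin}. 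For the cohomology, the $t$-local Cousin complex consists of $t$-flasque sheaves, so being a resolution it computes $H^p(X_t,\uPi_q)$ for every $p$; identifying $E_{2,c}^{p,q}(X,\Pi_*)$ with the cohomology of the Gersten complex on the line $q$ and using the isomorphism above gives $E_{2,c}^{p,q}(X,\Pi_*)\simeq H^p(X_t,\uPi_q)$. For $p<q$ this is \Cref{prop:truncated-Cousin}; for $p=q$ it is new, and it holds because the Cousin complex has no terms in degrees $>q$, so the $q$-th cohomology of the $q$-truncated complex agrees with that of the full one.

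The one point I expect to require genuine care is the diagonal $p=q$: a priori $E_{2,c}^{q,q}(X,\Pi_*)$ is merely a pointed set, the degree-$0$ component of such a homotopical complex being only a pointed set and the subquotient at the end being an orbit set. This is resolved precisely by the isomorphism $\Ge^*(X_t,\Pi_*,q)\simeq\Cz^*_t(\uPi_q)$, whose target is a genuine complex of abelian sheaves: it equips $E_{2,c}^{q,q}(X,\Pi_*)$ with the abelian group structure of $H^q(X_t,\uPi_q)$. The remaining work --- the truncation bookkeeping and the flasqueness computation --- is routine.
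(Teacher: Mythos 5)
Your argument is correct for $q>\dim(X)$, and there it coincides with the paper's (which simply says this case follows from the preceding proposition, since then every nonzero term of the Gersten complex sits in an interior degree of the $q$-truncated complex). The gap is at $q=\dim(X)$. You write that the Gersten property, ``transported through the isomorphism,'' shows that $\uPi_q\to\Cz^*_t(\uPi_q)$ is a resolution; but by the paper's conventions this is not what the Gersten property asserts. Per \Cref{unstable-cx} and \Cref{df:coaugmented_cohtp_cpx}, exactness of the \emph{augmented} homotopical complex $\uPi_q\to\Ge^*(X_t,\Pi_*,q)$ only involves $\pi_n$ for $n>0$ together with the conditions at the augmentation $\tau$ --- in cohomotopical indexing, exactness at $\uPi_q, C^0,\dots,C^{q-1}$ but \emph{not} at the final term $C^q$. (This is consistent with \Cref{thm:degeneracy}, whose conclusion gives $E_2^{p,q}=*$ only for $0<p<q$, the diagonal terms $E_2^{q,q}$ being governed instead by \Cref{lm:abstract-ssp-diag} as graded pieces of a filtration on $\Pi_0(X)$.) When $q=\dim(X)$ the last potentially nonzero term of the Cousin complex is exactly $C^q$, so surjectivity of $d^{q-1}$ onto $C^q$ --- which is what Cohen--Macaulayness and the isomorphism $E_{2,c}^{q,q}\simeq H^q(X_t,\uPi_q)$ require --- is precisely the statement your argument does not supply. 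Your remark that the diagonal case is ``resolved precisely by the isomorphism'' of complexes does not address this: identifying the terms of the two complexes does not produce the missing exactness at the top.

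The paper closes this gap by invoking \Cref{prop:q-large}: since the Gersten hypothesis gives collapse of the unstable coniveau spectral sequence (\Cref{prop:carac-Gersten}) and $q=\dim(X)=d\geq d-1$, the \emph{bi}augmented complex, with coaugmentation $\epsilon$ landing in $\Pi_0(X^{\leq q}_+)/\Pi_0(X^{\leq q+1}_+)$, is exact; for $q\geq\dim(X)$ one has $X^{\leq q}=X^{\leq q+1}=X$, so this target is $*$ and the exactness of the coaugmented-by-$*$ complex is exactly the surjectivity at $C^q$ that you are missing. With that ingredient added, the rest of your reductions (the truncation being harmless, uniqueness from \Cref{thm:unstable-unique-cousin}, and the flasqueness computation of $H^p(X_t,\uPi_q)$) go through as you describe.
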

\begin{proof}
As explained previously, the case $q>\dim(X)$ follows from the preceding proposition.
 For the case $q=\dim(X)$, we need to prove the exactness of the Gersten complex coaugmented
 by the constant sheaf $*$. This follows from \Cref{prop:q-large}.
\end{proof}

\subsection{The non-abelian case and Classifying spaces of sheaf of groups}

\begin{notation}
Based on our notion of homotopy complex, we can extend the classical definition
 of Cousin resolutions and Cohen-Macaulay sheaves to the non-abelian context.

As in the preceding sections, we let $X$ be a scheme equipped with one of the topologies $t=\zar, \nis$.
 Let $\cG$ be a sheaf groups on $X_t$, and let $B\cG$ be its classifying space (see e.g. \cite[\textsection 4.1]{MV}),
 as an object of $\iSh(X_t)$. Recall that:
$$
\pi_n(\Map(X,B\cG))=[S^n \wedge X_+,B\cG]_*\simeq \begin{cases}
H^1(X_t,\cG) & n=0, \\
\cG(X) & n=1 \\
* & n>1
\end{cases}
$$
where $H^1(X_t,\cG)$ denotes the pointed set of $t$-local $\cG$-torsors on $X$,
 and $[-,-]_*$ denotes the pointed homotopy classes in $\iSh(X_t)$.
 More generally, for any closed subset $Z \subset X$, $p=0,1$, we put:
$$
H^p_Z(X_t,\cG)=[S^{1-p} \wedge X/X-Z,B\cG]_*
$$
where $X/X-Z$ is pointed in the obvious way. According to this definition, we get a homotopy exact sequence
 in the classical sense (or ``long homotopy sequence'' in the final topos according to \Cref{df:long_htp_seq}):
$$
* \rightarrow H^0_Z(X_t,\cG) \rightarrow \cG(X) \xrightarrow{j^*} \cG(X-Z) \xRightarrow \partial
 H^1_Z(X_t,\cG) \rightarrow H^1(X_t,\cG) \xrightarrow{j^*} H^1\big((X-Z)_t,\cG\big).
$$
In particular, the boundary map corresponds to an action of $\cG(X-Z)$ on the (pointed) set $H^1_Z(X_t,\cG)$.
 In fact, $H^*(-_t,\cG)$ is the $t$-local cohomotopy theory with support in the sense of \Cref{num:cohtp_support}
 represented by $B\cG$.

We can therefore apply the previous considerations to this theory.
 For once, by taking colimits (see \Cref{prop:colimits-htp-seq}), this definition extends to any $t$-localization of $X$:
 given a point $x \in X$, we get in particular a homotopy exact sequence:
\begin{equation}\label{eq:local-cohtp}
* \rightarrow H^0_x(X_{(x)}^t,\cG) \rightarrow \cG(X_{(x)}^t) \xrightarrow{j^*} \cG(X_{(x)}^t-\{x\}) \xRightarrow \partial
 H^1_x\big(X_{(x)}^t,\cG\big) \rightarrow *
\end{equation}
so that $H^0_x(X_{(x)}^t,\cG)$ is the kernel of the morphism of groups $j^*$ and 
 $H^1_x(X_{(x)}^t,\cG)$ is the homogeneous set $\cG(X_{(x)}^t-\{x\})/\cG(X_{(x)}^t)$.
 For readability of the notation, we will put:
$$
H^p_x(X_{(x)}^t,\cG)=H^p_x(X_t,\cG).
$$

We also get the unstable coniveau spectral sequence in \Cref{df:coniv-ssp-E-coef},
 as well as its sheafified version as in \Cref{df:t-local-coniv-ssp}. Motivated by the abelian case,
 we adopt the following terminology.
\end{notation}
\begin{definition}
Consider the preceding notation. We define the homotopy Cousin complex of the sheaf $\cG$ over $X_t$
 as
$$
\Cz_t^*(\cG)=\uE_{1,c}^{*,1}(X_t,\Pi_*).
$$
It is a $2$-term homotopy complex, concentrated in degree $0$ and $1$, co-augmented by $\cG$:
$$
\xymatrix@C=30pt@R=4pt{
\cG\ar^-\tau[r] & {\prod}'_{\eta \in X^{(0)}} \eta_*(\cG(\kappa_\eta))\ar@{=>}[r]\ar@{=}[d]
 & {\prod}'_{x \in X^{(1)}} x_* H^1_x(X_t,\cG)\ar@{=}[d] \\
 & \Cz_t^1(\cG)\ar@{=>}[r] & \Cz_t^0(\cG).
}
$$
See \Cref{num:restricted-product} for the notation.
\end{definition}
Based on the theory developed so far, we get the following conditions,
 non-abelian analogous of the characterization of Cohen-Macaulay abelian sheaves:
\begin{proposition}\label{prop:htpy-CM}
Let $X$ be a scheme and $\cG$ be a sheaf of groups on $X_t$, $t=\zar, \nis$.
 The following conditions are equivalent:
\begin{enumerate}[label=\emph{(\roman*)}]
\item The co-augmented homotopical complex $\cG \rightarrow \Cz_t^*(\cG)$ is exact (in the sense of \Cref{df:coaugmented_cohtp_cpx}).
\item For any open (resp. étale if $t=\nis$) $V/X$, one gets an isomorphism:
$$
\cG(V) \simeq \left\{ g \in {\prod}'_{\eta \in V^{(0)}} \cG(\kappa_\eta) \mid 
 \forall x\in V^{(1)},
 g \text{ acts trivially on } H^1_x(V_t,\cG)\right\}
$$
induced by the canonical (restriction) map $\tau$ defined above.
\item For any open (resp. étale if $t=\nis$) $V/X$, any closed subset $Z \subset V$, the restriction map
$$
j^*:\cG(V) \rightarrow \cG(V-Z)
$$
is an isomorphism if $\codim_Z(V)>1$ and a monomorphism if $\codim_Z(V)=1$.
\item For any point $x \in V$, and any $i=0,1$, 
$$H^i_x\big(X,\cG\big)=*$$ if $i \neq \codim_X(x)$.
\item The simplicial sheaf $B\cG$ is Gersten on $X_t$ in the sense of \Cref{df:Gersten-ppty}.
\end{enumerate}
Moreover, when these conditions hold, the obvious map
$$
H^1(X_t,\cG) \rightarrow \left({\prod}'_{x \in X^{(1)}}  H^1_x(X_t,\cG)\right)/\left({\prod}'_{\eta \in X^{(0)}} \cG(\kappa_\eta)\right)
$$
is an injection of pointed sets. Finally, if $\dim(X)\leq 1$, the sequence of pointed sheaves
$$
* \rightarrow \cG \xrightarrow \tau \Cz_t^0(\cG) \xRightarrow d \Cz_t^1(\cG) \rightarrow *
$$
is exact and the above map is a bijection.
\end{proposition}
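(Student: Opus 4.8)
The plan is to leverage the remark made just before the statement, that $H^*(-_t,\cG)$ is the $t$-local cohomotopy theory with supports $\Pi_*$ represented by $B\cG$, together with the identification $\Cz_t^*(\cG)=\uE_{1,c}^{*,1}(X_t,\Pi_*)=\Ge^*(X_t,\Pi_*,1)$. Its sheafified homotopy objects are $\uPi_0=*$ (a $\cG$-torsor becomes trivial over any $t$-local scheme, so the sheafification of $V\mapsto H^1(V_t,\cG)$ vanishes), $\uPi_1=\cG$, and $\uPi_q=*$ for $q\geq 2$, so condition (i) says exactly that $\Pi_*$ is $t$-locally Gersten in degree $1$ in the sense of \Cref{df:Gersten-ppty}. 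The equivalence (i)$\Leftrightarrow$(v) then comes essentially for free once (i)$\Leftrightarrow$(iv) is known: (v)$\Rightarrow$(i) is the case $q=1$; conversely Gersten in degree $0$ holds unconditionally, as the degree-$0$ complex of \Cref{cor:coniveau_ssp_refined} is $\uPi_0=*\to{\prod}'_{\eta\in X^{(0)}}\eta_*H^1_t(\kappa(\eta),\cG)=*$ ($t$-cohomology of a field vanishing in positive degrees for $t=\zar,\nis$), while for $q\geq 2$ all terms $\Ge^p(X_t,\Pi_*,q)$ with $p<q$ vanish as soon as $H^0_x(X^t_{(x)},\cG)=*$ for every $x$ with $\codim_X(x)\geq 1$ (the term for $p=q-1$ is ${\prod}'_{x\in X^{(q-1)}}x_*H^0_x(X^t_{(x)},\cG)$, and those for $p\leq q-2$ vanish unconditionally by \eqref{eq:htp-long-ex} and the vanishing of $\pi_{\geq 2}$ of mapping spaces into $B\cG$), so the $q$-truncated augmented complex is then vacuously exact — and this vanishing is part of (iv).

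For the core equivalences I would apply \Cref{prop:carac-Gersten} to $\Pi_*$ with $q=1$ and spell out its condition (iii) using $\Pi_1^Z(\sX)=\Gamma_Z(\sX,\cG)=\Ker(j^*\colon\cG(\sX)\to\cG(\sX-Z))$ and $\Pi_0^Z(\sX)=H^1_Z(\sX,\cG)$ (via the long exact sequences \eqref{eq:htp-long-ex} and \eqref{eq:local-cohtp}); this turns it into a family of effaceability statements on the schemes $\sX=X^t_{(x)}$. Since the Gersten complex and these long exact sequences commute with the filtered colimits defining $X^t_{(x)}$ (\Cref{prop:colimits-htp-seq}) and are functorial for flat, in particular étale, maps (\Cref{num:functorial-coniv-ssp-E-coef}), a codimension dévissage modelled on \cite[\textsection IV]{Hart66}, but run with sheaves of pointed sets and groups so that kernels, images and orbit spaces behave only as in (T1)--(T3) of \Cref{num:1-topos}, should identify this family with condition (iii) as stated for all $V/X$ and all closed $Z\subset V$ — the ``monomorphism'' clause for $\codim=1$ corresponding to the $p=0$ effaceability $\Gamma_Z=*$, and the ``isomorphism'' clause for $\codim>1$ coming, through $\cG(\sX)\to\cG(\sX-Z)\xRightarrow{\partial}H^1_Z(\sX,\cG)$, from the $p=1$ effaceability killing $\partial$ — and, on specialising to $V=X^t_{(x)}$ and $Z=\{x\}$, with the pointwise vanishings of (iv). Finally (ii) is simply the exactness of the augmented $1$-truncated complex $\cG\to\Cz_t^*(\cG)$ read off on sections over an arbitrary $V$: injectivity of $\tau$, and the identification of $\cG(V)$ with the set of tuples of germs that stabilise the base point under the action $d\colon\Cz_t^0(\cG)(V)\to\Cz_t^1(\cG)(V)$, i.e.\ act trivially on every $H^1_x(V_t,\cG)$; as exactness of $t$-sheaves is tested on sections (or stalks), this gives (ii)$\Leftrightarrow$(i) and closes the cycle.

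For the supplementary statements I would assume (i)--(v) and work in the coniveau spectral sequence \eqref{eq:coniv_ssp} of $\Pi_*$ over $X$ (taking $X$ of finite dimension, the general case reducing to this). By \Cref{cor:coniveau_ssp_refined}, $E_{1,c}^{p,q}={\prod}'_{x\in X^{(p)}}\Pi^x_{q-p}(X^t_{(x)})$; on the diagonal $E_{1,c}^{0,0}={\prod}'_{\eta}H^1_t(\kappa(\eta),\cG)=*$ and, for $q\geq 2$, $E_{1,c}^{q,q}={\prod}'_{x\in X^{(q)}}H^1_x(X^t_{(x)},\cG)=*$ by (iv), so by \Cref{lm:abstract-ssp-diag} the abutment filtration on $\Pi_0(X_+)=H^1(X_t,\cG)$ has a single possibly non-trivial graded piece $E_\infty^{1,1}$, whence $H^1(X_t,\cG)\cong E_\infty^{1,1}$. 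For $r\geq 2$ no differential enters or leaves $E_r^{1,1}$ for index reasons, so $E_\infty^{1,1}=E_2^{1,1}$, which by \Cref{rem:compute_E_r_BK} is a sub-pointed-set of $E_1^{1,1}/E_1^{0,1}=({\prod}'_{x\in X^{(1)}}H^1_x(X_t,\cG))/({\prod}'_{\eta\in X^{(0)}}\cG(\kappa(\eta)))$ (the subobject cutting it out being saturated under the $E_1^{0,1}$-action, since the relevant map is equivariant with trivial target action); composing yields the asserted injection. When $\dim X\leq 1$ the tower is $1$-bounded with $X^{\leq 1}=X^{\leq 2}=X$, so that subobject is all of $E_1^{1,1}$ and $E_2^{1,1}=E_1^{1,1}/E_1^{0,1}$ exactly, giving the claimed bijection; and the exactness of $t$-sheaves $*\to\cG\xrightarrow{\tau}\Cz_t^0(\cG)\xRightarrow{d}\Cz_t^1(\cG)\to*$ then follows from (i) together with \Cref{prop:q-large} applied to the $t$-local coniveau spectral sequence \eqref{eq:coniv_ssp-sheaf} for $q=1$ and $d=\dim X\leq 2$, since for $\dim X\leq 1$ the cokernel target of the co-augmentation in \eqref{eq:augmented_hcpx3} is trivial.

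The part I expect to be the main obstacle is the dévissage in the second paragraph: running Hartshorne's inductive analysis of Cousin complexes \cite[\textsection IV]{Hart66} in a context where only the monomorphism/epimorphism formalism (T1)--(T3) of \Cref{num:1-topos} is available, and carefully justifying the passage to the local schemes $X^t_{(x)}$ via colimits. By comparison the spectral-sequence bookkeeping of the third paragraph — filtrations of pointed sets, and matching the ``obvious map'' with the edge morphism — is routine given \Cref{thm:degeneracy}, \Cref{lm:abstract-ssp-diag} and \Cref{prop:q-large}.
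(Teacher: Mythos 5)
Your overall architecture is workable, but the step you yourself flag as ``the main obstacle'' --- the codimension d\'evissage translating the effaceability condition of \Cref{prop:carac-Gersten}(iii) on the local schemes $X^t_{(x)}$ into conditions (iii) and (iv) of the proposition --- is left entirely unexecuted, and it is the linchpin of your argument: without it you have neither (i)$\Leftrightarrow$(iii) nor (i)$\Leftrightarrow$(iv), and your proof of (i)$\Rightarrow$(v) also presupposes (iv). This is a genuine gap, and moreover an unnecessary detour. The paper never invokes the degeneracy criterion or any effaceability statement here. It runs the cycle (i)$\Rightarrow$(ii)$\Rightarrow$(iii)$\Rightarrow$(iv)$\Rightarrow$(v)$\Rightarrow$(i), where each arrow is short: (ii)$\Rightarrow$(iii) is read off from the description of $\cG(V)$ as germs acting trivially on the $H^1_x$; (iii)$\Rightarrow$(iv) is just passage to the colimit over $t$-neighbourhoods combined with the local sequence \eqref{eq:local-cohtp}; and (iv)$\Rightarrow$(v) uses that $B\cG$ is $1$-truncated, so that under (iv) the sheafified $E_1$-page $\uE^{p,q}_{1,c}(X_t,B\cG)$ is concentrated on the single line $q=1$ --- a one-line spectral sequence collapses for trivial reasons, with no appeal to \Cref{thm:degeneracy}. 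You should replace your second paragraph by this direct chain (your observations in the first paragraph about the degrees $q=0$ and $q\geq 2$ are then absorbed into the concentration-on-one-line argument). A second, smaller inaccuracy: exactness of $t$-sheaves is \emph{not} tested on sections over arbitrary $V$; what saves (i)$\Rightarrow$(ii) is that $\tau$ is a monomorphism of sheaves with sheaf-theoretic image equal to the section-wise kernel $\Ker(d)$, so local preimages glue.

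For the supplementary assertions your route differs from the paper's. The paper sets $F=\Cz^0_t(\cG)/\cG$ and applies Giraud's non-abelian cohomology sequence \cite[Chapitre III, Prop.\ 3.2.2]{Gir} to the monomorphism $\tau$, concluding from the $t$-flasqueness of $\Cz^0_t(\cG)$ (so $H^1(X_t,\Cz^0_t(\cG))=*$) that $H^1(X_t,\cG)$ injects into $\Gamma(X,\Cz^1_t(\cG))/\Gamma(X,\Cz^0_t(\cG))$; the $\dim\leq 1$ case then uses \Cref{prop:q-large} exactly as you do. Your alternative via the abutment filtration of the global coniveau spectral sequence and \Cref{lm:abstract-ssp-diag} can be made to work, but it carries extra burdens you dismiss as routine: for pointed-set-valued spectral sequences the identification $H^1(X_t,\cG)\simeq E_\infty^{1,1}$ from the vanishing of the other diagonal terms, and the matching of the resulting injection with ``the obvious map'', both require the careful unwinding of \Cref{rem:compute_E_r_BK}. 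The Giraud argument buys you all of this in one stroke and is the intended proof.
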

\begin{proof}
It is clear that (i) implies (ii), and (ii) implies (iii).
 One obtains that (iii) implies (iv) by applying colimits and using the homotopy exact sequence \eqref{eq:local-cohtp}.
 Condition (iv) implies that the $t$-local coniveau spectral sequence $\uE^{**}_{1,c}(X_t,B\cG)$ is concentrated
 on line $q=1$. Therefore it degenerates at $E_2$ and in fact, it collapses on the column $*=0$
 in the sense of \Cref{df:ssp-collapse}. In other words, (iv) implies (v).
 Finally, (v) obviously implies (i).

To get the remaining assertions, we consider the homotopy sequence, exact by (i):
$$
* \rightarrow \cG \xrightarrow \tau \Cz_t^0(\cG) \xRightarrow d \Cz_t^1(\cG)
$$
We let $F \subset \Cz_t^1(\cG)$ be the pointed $t$-sheaf which is the image of $d$, so that $F=\Cz_t^0(\cG)/\cG$ by exactness of the preceding sequence.
 Therefore one can apply \cite[Chapitre III, Prop. 3.2.2]{Gir} to the monomorphism $\tau$ to get a homotopy exact sequence:
$$
* \rightarrow \cG(X) \rightarrow \Gamma(X,\Cz_t^0(\cG)) \Rightarrow \Gamma(X,F) \rightarrow H^1(X_t,\cG) \rightarrow H^1(X_t,\Cz_t^0(\cG)).
$$
 Note that $H^1(X_t,\Cz_t^0(\cG))=*$, since $\Cz_t^0(\cG)$ is a $t$-flasque sheaf of groups
 (flasque when $t=\zar$, satisfying the Brown-Gersten property when $t=\nis$).
 Finally, if $\dim(X)=1$, we get the stated exactness by appealing to \Cref{prop:q-large}.
\end{proof}

\begin{remark}
If we use the \'etale topology, all the above conditions are again equivalent, but the last assertion does not
 necessarily hold.
\end{remark}

\begin{definition}\label{df:CM-non-ab}
When a sheaf of groups $\cG$ on $X_t$ satisfies the equivalent conditions above,
 we will say that $\cG$ is \emph{homotopy Cohen-Macaulay} on $X_t$.
\end{definition}

\begin{example}\label{ex:htp-CM}
\begin{enumerate}
\item An abelian sheaf which is Cohen-Macaulay is obviously a homotopy Cohen-Macaulay as sheaf of groups.
\item It follows from \cite[Th. 6.1]{MorelLNM} that over a perfect field $k$,
 for any pointed simplicial sheaf $\cX$ on the Nisenvich site $\Sm_k$,
 the sheaf of groups $\pi_1^{\AA^1}(\cX)$ is homotopy Cohen-Macaulay in the above sense
 over $X_t$ for any smooth $k$-scheme $X$. This fact was our inspiration to this section.
 We will give a direct proof in the next section.
\end{enumerate}
\end{example}

\begin{remark}\label{rem:htp-CM}
When $\cG$ is a sheaf of groups over the Nisnevich site $\Sm_k$,
 given a smooth $k$-scheme $X$,
 our augmented Cousin complex $\cG \rightarrow \Cz_{\nis}(\cG)$ on $X_\nis$ is precisely the restriction
 of the complex $1 \rightarrow \cG \rightarrow \cG^{(0)} \Rightarrow \cG^{(1)}$ restricted to $X_{\nis}$
 as considered by Morel in \cite[\textsection 2.2]{MorelLNM}.
\end{remark}

Thanks to a result of \v Cesnavi\v cius and Scholze, and according to our previous proposition,
 one deduces the following result.
\begin{theorem}\label{thm:alg-gp-CM}
Let $X$ be a noetherian scheme which is regular in codimension less than $3$.
 Let $G$ be a separated algebraic group scheme over $X$, which we assume to be affine if $\dim(X)>1$.

Then $G$ is a homotopy Cohen-Macaulay sheaf on $X_t$ for $t=\zar, \nis$ in the above sense.
\end{theorem}
\begin{proof}
In fact, one deduces property (iii) of \Cref{prop:htpy-CM} by applying \cite[Lem. 7.2.7]{CS21}
 to $Y=G$, $X=V$ and $Z=Z$ --- under the notation of (iii).
\end{proof}

\begin{corollary}\label{cor:alg-gp-CM}
Let $X$ be a connected Dedekind scheme with function field $K$,
 and $G$ be a separated $X$-group scheme.
 Then $G$ is homotopy Cohen-Macaulay on $X_t$ for $t=\zar, \nis$ and the canonical pointed map
$$
H^1(X_t,G) \rightarrow \left({\prod}'_{x \in X^{(1)}}  H^1_x(X_t,G)\right)/G(K)
$$
is a bijection.
\end{corollary}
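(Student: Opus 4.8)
The plan is to deduce this directly from \Cref{thm:alg-gp-CM} together with the final clause of \Cref{prop:htpy-CM}. First I would record that a connected Dedekind scheme $X$ (with function field $K$, hence integral) is a Noetherian scheme which is normal of dimension $\leq 1$, hence regular; in particular it is regular in codimension $<3$. Since $\dim(X)\leq 1$, the affineness hypothesis of \Cref{thm:alg-gp-CM} is vacuous, so that theorem applies to the separated $X$-group scheme $G$ and shows that $G$ is homotopy Cohen-Macaulay on $X_t$ for $t=\zar$ and $t=\nis$. This is the first assertion of the corollary.

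Next, since $\dim(X)\leq 1$, the last sentence of \Cref{prop:htpy-CM} applies: the sequence of pointed $t$-sheaves
$$
* \rightarrow G \xrightarrow{\tau} \Cz_t^0(G) \xRightarrow{d} \Cz_t^1(G) \rightarrow *
$$
is exact, and the canonical pointed map
$$
H^1(X_t,G) \rightarrow \left({\prod}'_{x \in X^{(1)}} H^1_x(X_t,G)\right)\Big/\left({\prod}'_{\eta \in X^{(0)}} G(\kappa(\eta))\right)
$$
is a bijection. It only remains to identify the denominator of this target with $G(K)$.

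This is where I would invoke connectedness: being Dedekind, $X$ is integral with a single generic point $\eta$, whose residue field is $\kappa(\eta)=K$. Hence $X^{(0)}=\{\eta\}$, and the restricted product (\Cref{num:restricted-product}) over this one-element index set is the single factor $G(K)$; the action it induces on ${\prod}'_{x\in X^{(1)}} H^1_x(X_t,G)$ is precisely the one carried by the homotopy Gersten complex $\Cz_t^*(G)$ through the residue maps. Substituting $G(K)$ for ${\prod}'_{\eta\in X^{(0)}} G(\kappa(\eta))$ in the bijection of the previous paragraph yields exactly the map in the statement, so it is a bijection.

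I do not expect any genuine obstacle here: the mathematical substance is entirely contained in \Cref{thm:alg-gp-CM} (which rests on \cite[Lem.\ 7.2.7]{CS21}) and in the $\dim\leq 1$ part of \Cref{prop:htpy-CM} (which rests on \cite[Chapitre III, Prop.\ 3.2.2]{Gir} and the $t$-flasqueness of $\Cz_t^0(G)$). The only things to verify are routine: that a Dedekind scheme meets the regularity hypothesis, and that its unique codimension-$0$ point contributes the single factor $G(K)$. In the degenerate case $\dim(X)=0$, i.e.\ $X=\Spec K$, the index set $X^{(1)}$ is empty and both sides reduce to a point, so the assertion is trivial.
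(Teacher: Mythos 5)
Your proposal is correct and follows exactly the route the paper intends (the corollary is stated without proof as an immediate consequence of \Cref{thm:alg-gp-CM} and the $\dim\leq 1$ clause of \Cref{prop:htpy-CM}): regularity of a Dedekind scheme makes the theorem apply with the affineness hypothesis vacuous, and connectedness gives $X^{(0)}=\{\eta\}$ with $\kappa(\eta)=K$, identifying the denominator with $G(K)$. Nothing is missing.
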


We can compare this with~\cite[Theorem 0.1, Theorem 0.3]{Gro17}.
\begin{corollary}\label{cor:dedekind}
Let $X$ be a connected Dedekind scheme with function field $K$,
 and $G$ be a separated $X$-group scheme. For $t=\zar, \nis$, the map 
$$
H^1(X_t,\cG) \simeq G(\#O_{X_t})\backslash G(\#A_{X_t})/G(K)
$$
is a bijection, where $$G(\#O_{X_t}):= {\prod}'_{x \in X^{(1)}} G(X^t_{(x)})$$ and $$G(\#A_{X_t}):={\prod}'_{x \in X^{(1)}} G(X^t_{(x)}-\{x\}).$$
\end{corollary}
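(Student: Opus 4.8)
The plan is to deduce this from \Cref{cor:alg-gp-CM} by unwinding the local terms and the restricted products. First I would observe that a connected Dedekind scheme $X$ is integral and regular of dimension $\le 1$, hence a fortiori regular in codimension $<3$, and that the affineness hypothesis of \Cref{thm:alg-gp-CM} is vacuous here since $\dim X\le 1$; consequently that theorem applies and $G$ is homotopy Cohen--Macaulay on $X_t$. Then the last assertion of \Cref{prop:htpy-CM} (valid because $\dim X\le 1$) provides a bijection of pointed sets
$$
H^1(X_t,G)\;\xrightarrow{\sim}\;\Big({\prod}'_{x\in X^{(1)}}H^1_x(X_t,G)\Big)\Big/G(K),
$$
where I use that a connected Dedekind scheme has a unique generic point $\eta$ with $\kappa(\eta)=K$, so that ${\prod}'_{\eta\in X^{(0)}}G(\kappa_\eta)=G(K)$. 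It then only remains to identify the target with the double coset $G(\#O_X)\backslash G(\#A_X)/G(K)$.

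For the individual terms, recall that $X^t_{(x)}$ is the spectrum of $\sO_{X,x}$ if $t=\zar$, resp. of its henselization if $t=\nis$ --- a (henselian) discrete valuation ring since $x$ has codimension $1$ --- so $X^t_{(x)}-\{x\}$ is the spectrum of its fraction field. From the local homotopy exact sequence \eqref{eq:local-cohtp} the boundary map identifies $H^1_x(X_t,G)$ with the orbit pointed set $G(X^t_{(x)}-\{x\})/G(X^t_{(x)})$ for the translation action through $j^*$; separatedness of $G$ makes $j^*$ a monomorphism (a section agreeing with the unit on the schematically dense generic point is the unit), so this is genuinely a coset space. Since by \eqref{eq:rest-product} the restricted product is a filtered colimit of finite products, and orbit sets of coordinatewise group actions commute both with finite products and with filtered colimits, I get a canonical identification
$$
{\prod}'_{x\in X^{(1)}}H^1_x(X_t,G)\;\simeq\;\Big({\prod}'_xG(X^t_{(x)}-\{x\})\Big)\Big/\Big({\prod}'_xG(X^t_{(x)})\Big)\;=\;G(\#A_X)/G(\#O_X),
$$
equivalently the coset space $G(\#O_X)\backslash G(\#A_X)$ after applying the inversion automorphism of $G(\#A_X)$. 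Finally, the residual $G(K)$-action on the left-hand side is, in each coordinate, translation through the canonical map $G(K)\to G(X^t_{(x)}-\{x\})$ induced by $\Spec(X^t_{(x)}-\{x\})\to\Spec K$, i.e. translation by the diagonal copy of $G(K)$ inside $G(\#A_X)$; passing to orbits yields $G(\#O_X)\backslash G(\#A_X)/G(K)$ (up to the evident inversion) and completes the proof. I would also remark that for $t=\nis$ this recovers the adelic presentation of \cite{Gro17}.

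The only delicate point --- and the one I would check carefully --- is the compatibility with the restricted-product structure: one must verify that the coordinatewise maps $G(K)\to G(X^t_{(x)}-\{x\})$ actually assemble into a map valued in the restricted product, equivalently that for a fixed $g\in G(K)$ one has $g\in G(X^t_{(x)})$, hence $g$ dies in $H^1_x(X_t,G)$, for all but finitely many $x$. This holds by spreading out, since $G$ is of finite type over $X$. One must likewise make sure the ``orbit set commutes with ${\prod}'$'' steps are performed for exactly the variant of ${\prod}'$ appearing in the definitions of $G(\#O_X)$ and $G(\#A_X)$. No input beyond \Cref{thm:alg-gp-CM} and \Cref{cor:alg-gp-CM} is needed.
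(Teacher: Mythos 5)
Your argument is correct and follows the same route as the paper: the paper's (one-line) proof simply invokes the identification $H^1_x(X_t,G)\simeq G(X^t_{(x)})\backslash G(X^t_{(x)}-\{x\})$ from \eqref{eq:local-cohtp} and then appeals to \Cref{cor:alg-gp-CM}. Your write-up supplies the details the paper leaves implicit (compatibility of coset spaces with the restricted product, the diagonal $G(K)$ landing in $G(\#A_X)$ by spreading out), all of which are sound.
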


\begin{proof} As noted in \eqref{eq:local-cohtp}, we have $$ H^1_x(X_t,G)\simeq G(X^t_{(x)}) \backslash G(X^t_{(x)}-\{x\})$$ as the quotient set. The claim then follows. \end{proof}


For the rest of this section, we consider the case of torsors associated to groups schemes on a regular integral scheme of dimension 2 in \'etale topology. For groups schemes which are either a smooth reductive group scheme or a finite type of multiplicative type, we make the following observations. 

\begin{theorem}\label{dim2} For regular integral scheme $X$ of dimension 2 and $G$ either a smooth reductive $X$-group scheme or a finite type $X$-group scheme of multiplicative type, there is an exact sequence 
$$*\to {\prod}'_{x \in X^{(1)}}  H^1_x(X_{\et},G)/G(K)\xrightarrow{\gamma} H^1(X_{\et}, G)\xrightarrow{\theta} \im\bigg(H^1(X_{\et}, G)\to H^1((\Spec K)_{\et}, G)\bigg)\to * $$
of pointed sets.
\end{theorem}
To give a proof of this we analyze the coniveau exact couple as established in~\Cref{num:coniveau_ssp}.
We have part of the exact couple
 $$\xymatrix{
&				&				& 				& 	\Pi_0(X^{=2}) \ar[d] 	\\
&				&				& 				& 	 \Pi_0(X^{\leq 2}) \ar[d]  \\
1\ar[r] & \Pi_1(X^{\leq 1})\ar[r] & \Pi_1(X^{\leq 0}) \ar@{=>}[r] & \Pi_0(X^{=1}) \ar[r]  & \Pi_0(X^{\leq 1}) \ar[r] & \Pi_0(X^{\leq 0}) \\
}$$
Applying this via~\Cref{ex:rep_cohtp_functor} to the cohomotopy theory with supports associated to the space $\@X=B_{\et}G:=\&\alpha_*(BG)$ for an \'etale-sheaf of groups $G$ where 
$$\alpha^*: \iSh((X)\to  \iSh((X): \&\alpha_*$$ is the adjoint pair induced by the obvious morphims of sites. 
 $$\xymatrix@C=2em{
				&				& 				& 	\dlim_{\_Z}[X/X-Z^2, B_{\et}G] \ar[d]^{\-{\gamma}=*} 	\\
				&				& 				& 	H^1(X_{\et}, G)=[X, B_{\et}G] \ar[d]^-{\-\alpha} \ar[rd]^{\theta} \\
\ar[r] & G(K) \ar[r] &  {\prod}'_{x \in X^{(1)}}  H^1_x(X_{\et},G) \ar[r]^-{\gamma}  &  {\begin{array}{@{}c@{}}\dlim_{\_Z}[X-Z^2, B_{\et}G]\\ =\dlim_{\_Z}H^1((X-Z^2)_{\et}, G)\end{array}} \ar[r]^-{\alpha} &{\begin{array}{@{}c@{}}\dlim_{\_Z}[X-Z^1, B_{\et}G]\\ =H^1((\Spec K)_{\et}, G) \end{array}} \\
}$$
Here $[-, -]$ denotes the hom-set in the homotopy category associated with the $\infty$-category of Nisnevich sheaves on $X_{\nis}$.
We note that the sequences (horizontal long and vertical) are exact. As noted by \cite[paragraph after Question 6.2]{CTS79}, the map $\-\alpha$ is injective for regular integral scheme $X$ and $G$ an affine flat finite type $X$-group scheme (hence $\-\gamma=*$). 

Clearly, for each $x \in X^{(1)}$, the map $\dlim_{\_Z}H^1((X-Z^2)_{\et}, G)\to H^1((\Spec K)_{\et}, G)$ factors through
$$H^1((\Spec \@O_{X, x})_{\et}, G) \to H^1((\Spec K)_{\et}, G).$$ 
By \cite[proof of Proposition 6.8]{CTS79} we get that 
$$\im\left(\dlim_{\_Z}H^1((X-Z^2)_{\et}, G)\xrightarrow{\alpha} H^1((\Spec K)_{\et}, G) \right)=\cap_{x \in X^{(1)}}\im\bigg( H^1((\Spec \@O_{X, x})_{\et}, G)\to H^1((\Spec K)_{\et}, G) \bigg).$$
In particular, codimension 1 purity holds for $G$ on $X$ iff $\im(\alpha)=\im(\theta)$.

\begin{proof}[Proof of~\Cref{dim2}]
In the above discussion, let us assume in addition that $X$ is a regular integral scheme of dimension 2. Then by \cite[Theorem 6.13]{CTS79}, in the case $G$ is a smooth reductive $X$-group scheme and by~\cite[Corollaire 6.9]{CTS79} in the case $G$ is a finite type $X$-group scheme of multiplicative type, the map
$$\-\alpha: H^1(X_{\et}, G)\to \dlim_{\_Z}H^1((X-Z^2)_{\et}, G)$$ is bijective.
By~\Cref{lm:abstract-ssp-diag}, we have $$E_2^{1,1}=\gamma^{-1}\big(\im\big(\Pi_0(X) \xrightarrow a \Pi_0(X_1)\big)\big)/\Pi_1(X_{0}) = {\prod}'_{x \in X^{(1)}}  H^1_x(X_{\et},G)/G(K).$$
Note that $E_2^{1,1}=E_{\infty}^{1,1}$. 
Similarly, we can compute $$E_{\infty}^{0,0}=E_{2}^{0,0}=\im(\alpha)=\im(\theta).$$
By the convergence of the spectral sequence, we have an exact sequence of pointed sets
$$*\to E_{\infty}^{1,1}\to H^1(X_{\et},G)\to E_{\infty}^{0,0}\to *$$
Hence the result follows. 
\end{proof}

\begin{corollary}\label{cor:dim2} For regular integral scheme $X$ of dimension 2, and  $G$ a smooth reductive $X$-group scheme such that $G$ is a special group (\ie Zariski cohomology and \'etale cohomology agree), there is a bijection
$$ {\prod}'_{x \in X^{(1)}}  H^1_x(X_{\et},G)/G(K)\xrightarrow{\gamma} H^1(X_{\et}, G).$$
In particular, this holds for $G=GL_n$ ($n\geq 1$). 
\end{corollary}
\begin{proof}
By \Cref{dim2} it follows since $H^1((\Spec K)_{\et}, G)=H^1((\Spec K)_{\zar}, G)=*$.
\end{proof}
\begin{remark}
We note that \cite[Corollary B.7]{CRR20} has shown a similar result in view of the comparison result~\cite[Remarque 3.6.5(5)]{Gir}.
\end{remark}

\section{Examples of unstable Gersten resolutions}

\subsection{An unstable Bloch-Ogus-Gabber theorem}

\begin{notation}\label{num:htp-functor&closed-pairs}
Throughout this section, we fix a base scheme $S$. Let $\Sm_S$ be the category of smooth $S$-schemes
 and a $\nis$-local cohomotopy theory with supports $\Pi_*$ on $\Sm_S$ with coefficients in an arbitrary topos $\E$
 in the sense of \Cref{num:cohtp_support}. By taking colimits as usual, we can extend this
 theory to the category $\eSm_S$ of essentially smooth $S$-schemes.
 
We can therefore apply the constructions of the previous section to any essentially smooth $S$-scheme $X$,
 by restriction to $X_\nis$, and get the (resp. Zariski or Nisnevich-local) unstable coniveau spectral sequence
 of $X$ with coefficients in $\Pi_*$ of \Cref{df:coniv-ssp-E-coef}
 (resp. \Cref{df:t-local-coniv-ssp}).

As $\Pi_*$ is defined over the smooth site, we can use the classical methods
 to get the Zariski-local Gersten property (and more).
 To this end, we introduce the following key property on $\Pi_*$.
\end{notation}
\begin{definition}\label{def:gabber}
We will say that the cohomotopy theory $\Pi_*$ satisfies the \emph{Gabber property},
 if for any closed pair $(X,Z)$ such that $X$ is a smooth affine $S$-scheme and any $q \geq 0$,
 the following diagram is commutative:
$$
\xymatrix@R=-4pt@C=35pt{
 \Pi_q(\PP^1_X,\PP^1_Z)\ar[rr]^{j^*}\ar[rd]_{s_{\infty}^*} && \Pi_q(\AA^1_X,\AA^1_Z) \\
 &  \Pi_q(X,Z)\ar[ru]_{\pi^*} &
}
$$
where $j$ (resp. $s_\infty$, $\pi$) is the obvious open immersion
 (resp. section at $\infty$, canonical projection).
\end{definition}
Note that using the homotopy long exact sequence \eqref{eq:htp-long-ex},
 one deduces from the above that for any closed subscheme $Z \subset X$,
 it suffices to check the above property in the case $Z=\varnothing$.

\begin{example}\label{ex:rep}
If $\Pi_*$ is $\A^1$-invariant, one easily checks that it automatically satisfies the Gabber property.
 A key example will be the case of the homotopy functor $\Pi_*=\Pi_*^\cX$ as in \Cref{ex:rep_cohtp_functor}
 for $\cX$ an $\A^1$-local sheaf.
\end{example}

The following lemma follows classical lines in the unstable setting (see \cite{CHK}).
\begin{lemma}\label{keylemma}
Assume $\Pi_*$ satisfies the Gabber property.

Let $V$ be an affine smooth $S$-scheme and let $i:Z \hookrightarrow \AA^1_V$ be a closed subscheme
 such that the composite map $p:Z \xrightarrow i \AA^1_V \xrightarrow{\pi} V$ is finite.
 Let $F:=p(Z)_{red}$, so that one has a closed immersion $k:Z \rightarrow \AA^1_F$ which factors $i$.

Then for any $q \geq 0$, the following pointed map is trivial:
$$k_*:\Pi_q(\A^1_V,Z)\to \Pi_q(\A^1_V,\A^1_{F}).$$
\end{lemma}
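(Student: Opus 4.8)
The plan is to run Gabber's trick, following the classical argument of Colliot-Th\'el\`ene--Hoobler--Kahn \cite{CHK} but in our unstable formalism: compactify $\A^1_V$ to $\P^1_V$, observe that $Z$, being finite over $V$, extends to a closed subscheme of $\P^1_V$ disjoint from the section at infinity, and then absorb the pushforward $k_*$ into the comparison between $\P^1$ and $\A^1$ provided by the Gabber property.

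First I would record the geometric input. Since $p\colon Z\to V$ is finite, hence proper, and $\P^1_V\to V$ is separated while $Z\subset\A^1_V$ is closed, the composite $Z\hookrightarrow\A^1_V\hookrightarrow\P^1_V$ is a closed immersion, and $Z$ is disjoint from the section at infinity $s_\infty\colon V\to\P^1_V$; likewise $Z\subset\A^1_F\subset\P^1_F$ with $\P^1_F$ closed in $\P^1_V$. Writing $W=V$ for readability and $j\colon\A^1_W\hookrightarrow\P^1_W$ for the open immersion, $t$-excision applies because $j^{-1}(Z)=Z$ and yields an isomorphism $j^*_Z\colon\Pi_q(\P^1_W,Z)\xrightarrow{\sim}\Pi_q(\A^1_W,Z)$. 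Using the compatibility between the pushforwards along closed immersions and the pullbacks --- which is automatic whenever $\Pi_n(X,Z)$ is computed as $\Pi_n(X/X-Z)$, by functoriality of cofibers, hence in all the cases of interest (\Cref{rem:cohtp_support}) --- the map $k_*$ of the statement fits in a commutative square with $j^*_Z$, the pushforward $(k')_*\colon\Pi_q(\P^1_W,Z)\to\Pi_q(\P^1_W,\P^1_F)$ and the restriction $j^*\colon\Pi_q(\P^1_W,\P^1_F)\to\Pi_q(\A^1_W,\A^1_F)$ (note $j^{-1}(\P^1_F)=\A^1_F$). As $j^*_Z$ is invertible, it suffices to show that $j^*\circ(k')_*$ is trivial.

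Then I would invoke the Gabber property (\Cref{def:gabber}) for the closed pair $(W,F)$: it states precisely that $j^*=\pi^*\circ s_\infty^*$, with $s_\infty^*\colon\Pi_q(\P^1_W,\P^1_F)\to\Pi_q(W,F)$ the restriction along the section at infinity (here $s_\infty^{-1}(\P^1_F)=F$) and $\pi^*$ the projection pullback. Hence $j^*\circ(k')_*=\pi^*\circ\bigl(s_\infty^*\circ(k')_*\bigr)$. Applying the pullback--pushforward compatibility once more, now to $s_\infty\colon W\to\P^1_W$ and the closed immersion $Z\hookrightarrow\P^1_F$, and using that $s_\infty^{-1}(Z)=\varnothing$ since $Z$ lies in $\A^1_W$ and avoids the section at infinity, the composite $s_\infty^*\circ(k')_*$ factors through $\Pi_q(W,\varnothing)$, which is trivial (the cohomotopy of a point; this follows from the long homotopy exact sequence \eqref{eq:htp-long-ex}, or from \Cref{df:htp_functor} in the representable case). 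Therefore $j^*\circ(k')_*$ is trivial, and so is $k_*$.

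The work is mostly bookkeeping rather than conceptual; the delicate points are to ensure that the compatibility between the covariant maps $i_*$ and the contravariant maps $f^*$ is available in the formalism being used (it is if one takes $\Pi_n(X,Z)=\Pi_n(X/X-Z)$), and to keep careful track of the preimages $j^{-1}(Z)=Z$, $j^{-1}(\P^1_F)=\A^1_F$, $s_\infty^{-1}(Z)=\varnothing$, $s_\infty^{-1}(\P^1_F)=F$, so that excision and the Gabber property are each applied to the correct closed pair. The only geometric lemma is the standard fact that a subscheme of $\A^1_V$ finite over $V$ is closed in $\P^1_V$ and misses the section at infinity.
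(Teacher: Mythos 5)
Your proof is correct and follows essentially the same route as the paper: compactify $\A^1_V$ into $\PP^1_V$, use $t$-excision to replace $k_*$ by $\tilde k_*$ on $\PP^1_V$, invoke the Gabber property to rewrite $j^*$ as $\pi^*\circ s_\infty^*$, and conclude from the disjointness of $Z$ and the section at infinity. The only (minor) divergence is in the last step: the paper factors $s_\infty$ through the open complement $\PP^1_V-Z$ and kills $l^*\circ\tilde k_*$ directly as two consecutive maps of the localization sequence \eqref{eq:htp-long-ex} on $\PP^1_V$, whereas you pull back along $s_\infty$ and use the base-change square for $f^*$ against $i_*$ together with $s_\infty^{-1}(Z)=\varnothing$ --- a compatibility you rightly flag as not among the bare axioms of \Cref{num:cohtp_support}, though the paper's own commutative diagram implicitly uses it as well, so both arguments rest on the same footing.
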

\begin{proof}
We embed all the schemes in $\PP^1_V$ via the open immersion $j:\AA^1_V \subset \PP^1_V$.
As $s_{\infty}(V)\cap Z=\emptyset$, there is a factorization 
 $s_{\infty} : V\xrightarrow{s'} (\PP^1_V-Z)\xrightarrow l \PP^1_V.$
According to the Gabber property, one deduces a commutative diagram:
$$\xymatrix@R=4pt@C=24pt{ 
\Pi_q(\PP^1_V,Z)\ar[r]^{\tilde k_*} \ar[dd]^{\simeq}_{excision} & \Pi_q(\PP^1_V,\PP^1_{F}) \ar[rd]_/2pt/{s_{\infty}^*} \ar[dd]^{j^*} \ar[rr]^{l^*}
 && \Pi_q(\PP^1_V-Z,\PP^1_F-Z)\ar[dl]^{s'^*} \\
 && \Pi_q(V,F)  \ar[ld]^-{\pi^*} & \\
\Pi_q(\AA^1_V,Z)\ar[r]^{k_*} & \Pi_q(\AA^1_V,\A^1_{F}) &  
}$$
According to the localization long exact sequence \eqref{eq:htp-long-ex},
 one deduces that $l^*\circ \tilde k_*=*$ and this concludes.
\end{proof}

The strategy of \cite{CHK} to exploit the previous lemma works fine
 over a (not-necessarily perfect) field.
\begin{lemma}(Effaceability)\label{eff}
Assume $\Pi_*$ satisfies the Gabber property, and $S=\Spec(k)$ be the spectrum of an arbitrary field.

Let $X$ be a smooth affine $k$-scheme and $F \subset X$ be a finite subset.
 If $k$ is finite, we assume that $F=\{x\}$ is a single point.

Then, for any closed subscheme $Z\subset X$ of codimension $p>0$,
 there exists an open neighborhood $U$ of $F$ in $X$, a closed subscheme $T\subset U$, of codimension $p-1$, containing $Z_U := Z\cap U$,
 $i:Z_U \rightarrow T$,
 and such that for any integer $q\geq0$, the following pointed map is trivial:
$$\Pi_q(U,Z_U)\xrightarrow{\ i_*\ } \Pi_q(U,T).$$
\end{lemma}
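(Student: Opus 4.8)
The plan is to follow the strategy of \cite{CHK}: using a geometric presentation lemma together with $\nis$-excision, one reduces the statement to the situation already handled in \Cref{keylemma}. We may assume $Z\neq\varnothing$, since otherwise $\Pi_q(X,Z)=\Pi_q(X/X)=*$ and there is nothing to prove. So fix $X$, $F$, $Z$, $p>0$ as in the statement.

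First I would invoke Gabber's geometric presentation lemma (in the form of Ojanguren--Panin, see \cite[\S 3]{CHK}): since $X$ is a smooth affine $k$-scheme, $F\subset X$ a finite set of points (a single point when $k$ is finite), and $Z\subsetneq X$ is closed of codimension $p>0$, there exist an open neighbourhood $U$ of $F$ in $X$, a smooth affine $k$-scheme $V$ with $\dim V=\dim X-1$, and an \'etale $k$-morphism $\pi\colon U\to\AA^1_V$ such that $\pi$ restricts to a closed immersion on $Z_U:=Z\cap U$, the scheme-theoretic image $Z':=\pi(Z_U)$ is finite over $V$, and $\pi^{-1}(Z')=Z_U$. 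No perfectness hypothesis on $k$ is used here; the restriction to a single point when $k$ is finite is exactly what lets one run this step over a finite base field.

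Since $\pi$ is \'etale and $\pi^{-1}(Z')=Z_U\xrightarrow{\ \sim\ }Z'$, the pair $(U,Z_U)\to(\AA^1_V,Z')$ is excisive, so the $\nis$-excision axiom of a $\nis$-local cohomotopy theory with support (\Cref{num:cohtp_support}) yields an isomorphism $\pi^*\colon\Pi_q(\AA^1_V,Z')\xrightarrow{\ \sim\ }\Pi_q(U,Z_U)$ for every $q\geq 0$. Now apply \Cref{keylemma} to the affine smooth scheme $V$, to the closed subscheme $Z'\subset\AA^1_V$ (which is finite over $V$), and to $F_V:=p(Z')_{\red}$ with $p\colon Z'\to V$: the closed immersion $Z'\hookrightarrow\AA^1_{F_V}$ it provides factors $Z'\hookrightarrow\AA^1_V$, and the induced pointed map $\Pi_q(\AA^1_V,Z')\to\Pi_q(\AA^1_V,\AA^1_{F_V})$ is trivial for all $q$. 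To transport this back, set $\tilde T:=\pi^{-1}(\AA^1_{F_V})\subset U$; since $\pi$ is \'etale and $Z'\subset\AA^1_{F_V}$ we get $Z_U\subset\tilde T$ and $\codim_U\tilde T=\codim_{\AA^1_V}\AA^1_{F_V}=\dim V-\dim F_V\geq(\dim X-1)-(\dim X-p)=p-1$, using $\dim F_V=\dim Z'=\dim Z_U\leq\dim X-p$. Enlarging $\tilde T$ inside $U$ to a closed subscheme $T$ of codimension exactly $p-1$ containing it — which changes nothing, by functoriality of the covariance in the closed subscheme — and using the evident compatibility of the pullback $\pi^*$ with pushforward along closed immersions, one obtains a commutative square whose left vertical arrow is the isomorphism $\pi^*$ above and whose top arrow is the trivial map just produced; hence the bottom arrow $i_*\colon\Pi_q(U,Z_U)\to\Pi_q(U,T)$ is trivial, which is the assertion.

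The main obstacle is the first step: securing the geometric presentation lemma with all the listed properties over a possibly \emph{imperfect} field (this is precisely what allows dropping the perfectness hypothesis present in \cite{MorelLNM}) and, when $k$ is finite, over a finite field at the cost of working with a single closed point (replacing $X$ by a semi-local neighbourhood of $x$ if necessary). Everything downstream of the presentation — $\nis$-excision, \Cref{keylemma}, and the bookkeeping of codimensions — is formal, and all steps are uniform in $q\geq 0$.
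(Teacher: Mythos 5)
Your proof is correct and follows essentially the same route as the paper's: reduce via the geometric presentation lemma (\cite[3.1.1, 3.1.2]{CHK} for $k$ infinite, the finite-field variant for a single point) to an excisive morphism $(U,Z_U)\to(\AA^1_V,Z')$ with $Z'$ finite over $V$, then combine $\nis$-excision with \Cref{keylemma} and take $T$ to be the preimage of $\AA^1_{F_V}$. The only cosmetic difference is your extra step of enlarging $\tilde T$ to exact codimension $p-1$, which, as you note, is harmless by covariant functoriality.
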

\begin{proof}
According to \cite[3.1.1, 3.1.2]{CHK} if $k$ is infinite,
 or \cite{HK20} if $k$ is finite,
 there exists a Zariski open neighbourhood $U$ of $S$ in $X$,
 an excisive morphism $f:(U,Z_U) \rightarrow (\AA^1_V,W)$
 for some closed pair $(\AA^1_V,W)$ such that $V$ is a smooth $k$-scheme $V$ and 
  the composite map $W \subset \A^1_V \xrightarrow{\pi} V$ is finite
 (take $f=\varphi|_U$ in the notation of \cite[3.1.1]{CHK}).
 Consider the closed subscheme $F:=\pi(W)_{red} \subset V$ and $T= f^{-1}(\AA^1_F)$.
 Note that there is an inclusion $i:Z_U \rightarrow T$.
 Hence there is a commutative diagram of pointed sets 
$$
\xymatrix@R=10pt@C=20pt{
 \Pi_n(\A^1_V,T) \ar[r] \ar_{f^*}[d]^{\simeq} &  \Pi_n(\A^1_V,\A^1_F) \ar[d] \\
  \Pi_n(U,Z_U) \ar^{i_*}[r] &  \Pi_n(U,T)
}
$$
and excision for $f$, as well as \Cref{keylemma} allows us to conclude.
\end{proof}

The lemma gives exactly condition (i) of \Cref{thm:degeneracy} for the unstable coniveau
 exact couple of the semi-localization $X_{(F)}$ with coefficients in $\Pi_*$.
 In other words, wa have obtained.
\begin{proposition}\label{prop:Gabber&Gersten}
Assume $\Pi_*$ is a cohomotopy theory with supports  defined on $\Sm_k$,
 where $k$ is an arbitrary field, which satisfies the Gabber property.

Then for any essentially smooth semi-local (resp. local if $k$ is finite) $k$-scheme $X$,
 the unstable coniveau exact sequence $E_{1,c}^{**}(X,\Pi_*)$
 of $X$ with coefficients in $\Pi_*$ (as in \Cref{df:coniv-ssp-E-coef}),
 collapses on the column $*=0$ in the sense of \Cref{df:ssp-collapse}.

In other words, for any $q \geq 0$, the bi-augmented coniveau unstable complex:
\begin{align*}
* \rightarrow &\Pi_q(X)\xrightarrow{\tau} \bigoplus_{\eta \in X^{(0)}} \Pi_q\big(\kappa(\eta)\big)
 \xrightarrow{d_1^{0,q}} \bigoplus_{x \in X^{(1)}} \Pi_{q-1}^x\big(X_{(x)}^h\big) \rightarrow \cdots  \\
& \rightarrow \bigoplus_{x \in X^{(q-1)}} \Pi_1^x\big(X_{(x)}^h\big)
 \xRightarrow{d_1^{q-1,q}} \bigoplus_{s \in X^{(q)}} \Pi_0^s\big(X_{(s)}^h\big)
 \xrightarrow{\gamma} \Pi_0(X^{\leq q}_+)
\end{align*}
is exact in the sense of \Cref{unstable-cx}.
Moreover, for $q=dim(X)+1$ (resp. $q \leq \dim(X)$), one can replace the augmentation $\gamma$ 
 by the augmentation (see \Cref{num:co_augmentations_ssp} for the first one):
$$
\bigoplus_{s \in X^{(q)}} \Pi_0^s\big(X_{(s)}^h\big) \xrightarrow{\epsilon} \Pi_0(X^{\leq q}_+)/\Pi_0(X^{\leq q+1}_+)
 \quad \text{resp. } \bigoplus_{s \in X^{(q)}} \Pi_0^s\big(X_{(s)}^h\big) \xrightarrow{ct} *
$$
\end{proposition}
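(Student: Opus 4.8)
The plan is to obtain the statement directly from the degeneracy criterion \Cref{thm:degeneracy}, applied to the coniveau tower $X^{\leq\ast}_+$ over $X_+$ in $\C^{\op}_\ast$, through the dictionary $X_p\leftrightarrow X^{\leq p}$, $F_p\leftrightarrow X^{=p}$, $G_p\leftrightarrow X^{>p}$ already used in \Cref{prop:carac-Gersten}. First I would record that this tower is $d$-bounded with $d=\dim(X)<\infty$: an essentially smooth semi-local (a fortiori local) $k$-scheme has finite Krull dimension, and $X^{\leq n}=X$ once $n\geq\dim(X)$, since then $Z^{n+1}=\varnothing$ for every flag. Hence \Cref{thm:degeneracy} applies to this tower as soon as its hypothesis (i) — equivalently, via \Cref{prop:carac-Gersten}, its effaceability incarnation — is checked.

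The crux is therefore to verify hypothesis (i), and this is exactly what the effaceability lemma \Cref{eff} provides, the point being that effaceability is a colimit-stable property. Write the essentially smooth (semi-)local $X$ as the cofiltered limit of smooth affine $k$-schemes $V$ containing its finitely many closed points $F$ — along which $\Pi_\ast$ is, by construction, computed as a filtered colimit. Any closed $Z\subset X$ of codimension $p>0$ spreads out to some $Z_V\subset V$, and \Cref{eff} produces, at that finite stage, an open $U\ni F$ (so that $U_{(F)}=X$) and a closed $T\subset U$ of codimension $p-1$ containing $Z_U$ for which $i_\ast:\Pi_q(U,Z_U)\to\Pi_q(U,T)$ is trivial; this triviality persists after passing to the colimit, which gives the effacement over $X$ itself. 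Running this at the two consecutive codimensions needed to cover the parameter $i\in\{0,1\}$ of the index set of \Cref{thm:degeneracy} (i.e. the values $n=p,p+1$ in \Cref{prop:carac-Gersten}(iii)), and passing to the colimit over $\flag(X)$ (legitimate by \Cref{prop:colimits-htp-seq}), yields the triviality of all the pointed maps $\bar\alpha:\Pi_{q-p}(X^{>p-i})\to\Pi_{q-p}(X^{>p-i-1})$ — which is precisely hypothesis (i).

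Granting (i), \Cref{thm:degeneracy} gives that the coniveau spectral sequence \eqref{eq:coniv_ssp} essentially collapses on the column $\ast=0$ (\Cref{df:ssp-collapse}) and that, for each $q$, the coaugmented homotopical complex \eqref{eq:augmented_hcpx0} attached to the tower is exact in the sense of \Cref{df:coaugmented_cohtp_cpx}. Rewriting the terms $E_{1,c}^{p,q}=\Pi_{q-p}(X^{=p})$ as $\bigoplus_{x\in X^{(p)}}\Pi^x_{q-p}(X_{(x)}^h)$ via \Cref{cor:coniveau_ssp_refined} (restricted products in degrees $0$ and $1$, direct sums in higher degree, with the top term simplified because $X$ is reduced), and taking $\tau$ to be the augmentation of \Cref{num:co_augmentations_ssp}, this is exactly the displayed exactness of the $\tau$-augmented complex. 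The co-augmentation $\gamma:\Pi_0(X^{=q})\to\Pi_0(X^{\leq q}_+)$ is the map of \Cref{num:augmentation-ssp}; that $\Ker(\gamma)$ equals the orbit of $\im(d_1^{q-1,q})$ follows from the segment $\Pi_1(X^{\leq q-1}_+)\Rightarrow\Pi_0(X^{=q})\xrightarrow{\gamma}\Pi_0(X^{\leq q}_+)$ of the long homotopy exact sequence of the coniveau exact couple — which identifies $\Ker(\gamma)$ — together with the equivalent conditions of \Cref{thm:degeneracy} established above. The sharper co-augmentation by the cokernel of $\alpha$ for $q\geq\dim(X)-1$ (in particular $q=\dim(X)+1$) is \Cref{prop:q-large}, and for $q\leq\dim(X)$ the co-augmentation $ct$ to $\ast$ imposes no condition and is already part of \Cref{thm:degeneracy}(iii).

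I do not expect a genuinely new obstacle here: all the geometric input has been spent in \Cref{keylemma} and \Cref{eff}, and the rest is assembly. The one step requiring real care is the translation in the second paragraph — matching the conclusion of \Cref{eff} with hypothesis (i) of \Cref{thm:degeneracy} across the dictionary, in particular handling $i\in\{0,1\}$ through effaceability at codimensions $p$ and $p+1$, and checking that the filtered colimit over $\flag(X)$ commutes with the long exact sequences and preserves exactness, which is \Cref{prop:colimits-htp-seq}.
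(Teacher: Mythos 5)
Your proof follows exactly the paper's route: the paper's entire argument for this proposition is the one-sentence observation that \Cref{eff} supplies condition (i) of \Cref{thm:degeneracy} (equivalently, condition (iii) of \Cref{prop:carac-Gersten} at the two codimensions $n=p,p+1$) for the coniveau tower of the (semi-)localization, and your elaboration --- the $d$-boundedness of the tower, the dictionary $X_p\leftrightarrow X^{\leq p}$, $F_p\leftrightarrow X^{=p}$, $G_p\leftrightarrow X^{>p}$, the spreading-out and passage to the colimit over $\flag(X)$ via \Cref{prop:colimits-htp-seq}, and the identification of the $E_1$-terms via \Cref{cor:coniveau_ssp_refined} --- is correct and in fact more detailed than the paper's. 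The one slip is in your final clause: exactness of the complex coaugmented by $ct:\bigoplus_{s}\Pi_0^s\to *$ is \emph{not} vacuous. By \Cref{unstable-cx} it asserts that $\bigoplus_{x\in X^{(q-1)}}\Pi_1^x\big(X^h_{(x)}\big)$ acts transitively on \emph{all} of $\bigoplus_{s\in X^{(q)}}\Pi_0^s\big(X^h_{(s)}\big)$, whereas \Cref{thm:degeneracy}(iii) imposes no condition at the last term (the complex \eqref{eq:augmented_hcpx0} carries no coaugmentation there), so this does not come for free. The correct justification is again \Cref{prop:q-large}: once $q\geq\dim(X)$ the tower is constant beyond stage $q$, so the target $\Pi_0(X^{\leq q}_+)/\Pi_0(X^{\leq q+1}_+)$ of $\epsilon$ is trivial and $\epsilon$ coincides with $ct$; the ranges printed in the statement ($q=\dim(X)+1$ resp. $q\leq\dim(X)$) appear to be misprints for the ranges $q=\dim(X)-1$ resp. $q\geq\dim(X)$ dictated by \Cref{prop:q-large}.
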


The local case immediately implies that $\Pi_*$ is both Zariski and Nisnevich
 locally Gersten on any essentially smooth $k$-scheme $X$ (see \Cref{df:Gersten-ppty}).
 Taking into account the results of the previous section, one further gets:
\begin{corollary}\label{cor:unstable-BOG}
Consider the above assumptions, and let $q>1$ be an integer.

Let $\uPi_q$ be the Zariski sheaf on $\Sm_k$ associated with $\Pi_q$. Then the following results hold:
\begin{enumerate}
\item For any smooth $k$-scheme $X$, $\uPi_q$ is Cohen-Macaulay up to degree $q$ on $X_\zar$,
 and it is fully Cohen-Macaulay if $\dim(X)\leq q$.
\item $\uPi_q$ is a Nisnevich sheaf on $\Sm_k$.
 For any $0\leq p<q$, and any smooth $k$-scheme $X$,
 there exists isomorphisms which are natural with respect to flat pullbacks:
$$
E_{2,c}^{p,q}(X,\Pi_*) \simeq H^p_\zar(X,\uPi_q) \simeq H^p_\nis(X,\uPi_q).
$$
\item For $t=\zar, \nis$, and any smooth $k$-scheme $X$,
 there exists a unique isomorphism of complex of sheaves on $X_t$:
$$
\Ge^*(X_t,\Pi_*,q) \simeq \tau_{nv}^{\leq q} \Cz_t^*(\uPi_q^X)
$$
between the $t$-local Gersten complex in degree $q$ of $\Pi_*$ and the naively truncated
 $t$-local Cousin complex on the $t$-sheaf $\uPi_q$ restricted to the small site $X_t$.
 This isomorphism extends to the site of smooth $k$-schemes with morphisms as the flat (equivalently syntomic)
 ones.

In particular, for any point $x \in X^{(p)}$, one deduces isomorphisms:
$$
\Pi_{q-p}^x(X_{(x)}) \simeq \Pi_{q-p}^x(X^h_{(x)}) \simeq H^{p}_x(X_\zar,\uPi_q) \simeq H^{p}_x(X_\nis,\uPi_q).
$$
\end{enumerate}
\end{corollary}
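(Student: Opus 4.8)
The plan is to deduce the entire corollary from the local collapsing statement proved just above, together with the formal machinery of the previous sections; the only genuinely new input is the local one, and the rest is assembly.

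\emph{Reduction to the local Gersten property.} First I would observe that for every point $x$ of every smooth $k$-scheme $X$, both the Zariski local ring $\Spec \sO_{X,x}$ and the Henselian local ring $X^h_{(x)}=\Spec \sO^h_{X,x}$ are essentially smooth \emph{local} $k$-schemes, so the preceding proposition applies to each (for finite $k$ one uses exactly the ``local'' version, and in general the ``semi-local'' version covers these a fortiori). Hence the unstable coniveau spectral sequence of each such local ring collapses on the column $*=0$. By the equivalence (i)$\Leftrightarrow$(ii) of \Cref{prop:carac-Gersten}, this says precisely that $\Pi_*$ is \emph{both} $\zar$-locally and $\nis$-locally Gersten on $\Sm_k$ in \emph{every} degree $q\geq 0$.

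\emph{Cousin resolutions: assertions (1), (3) and part of (2).} Next, fixing $q>1$ and $t=\zar,\nis$, I would feed the $t$-local Gersten property into \Cref{prop:truncated-Cousin}. This immediately yields assertion (3): $\Ge^*(X_t,\Pi_*,q)$ is a $q$-truncated $t$-Cousin complex augmented by $\uPi_q$, there is a unique isomorphism $\Ge^*(X_t,\Pi_*,q)\simeq \tau_{nv}^{\leq q}\Cz_t^*(\uPi_q)$ of augmented complexes, and for $x\in X^{(p)}$ with $0\leq p\leq q$ a canonical isomorphism $\Pi_{q-p}^x(X^t_{(x)})\simeq H^p_x(X_t,\uPi_q)$; it also yields $E_{2,c}^{p,q}(X,\Pi_*)\simeq H^p(X_t,\uPi_q)$ for $0\leq p<q$. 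For (1): since $\Cz_\zar^*(\uPi_q)$ is a complex of $\zar$-flasque sheaves that, in degrees $\leq q$, resolves $\uPi_q$, the sheaf $\uPi_q$ is Cohen-Macaulay up to degree $q$ on $X_\zar$; when $\dim X<q$ the Cousin complex has no terms above degree $\dim X$, so the $q$-truncated resolution is already the full one and $\uPi_q$ is fully Cohen-Macaulay, while for the borderline $\dim X=q$ I would additionally invoke \Cref{prop:q-large} to obtain exactness of the Gersten complex coaugmented by the constant sheaf $*$, exactly as in the last proposition of the previous section.

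\emph{Zariski versus Nisnevich, the rest of (2), and naturality.} It then remains to tie the two topologies together. The components of $\Cz^*(\uPi_q)$ are restricted products of skyscrapers $x_*(M_x)$, which are simultaneously $\zar$-flasque and satisfy the Nisnevich Brown--Gersten property; since by the first step the complex is a resolution of $\uPi_q$ for \emph{both} topologies (using $\nis$-excision to identify $\Pi_{q-p}^x(X_{(x)})$ with $\Pi_{q-p}^x(X^h_{(x)})$ on stalks), the standard descent argument (as in \cite{CHK}) shows that $\uPi_q$ is already a Nisnevich sheaf and that $\Gamma(X,\Cz^*(\uPi_q))$ computes both $H^*_\zar(X,\uPi_q)$ and $H^*_\nis(X,\uPi_q)$; this gives the remaining isomorphisms of (2). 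Naturality with respect to flat --- equivalently syntomic --- pullbacks, and the extension to the big site of smooth $k$-schemes, follow from the functoriality of the whole coniveau construction recorded in \Cref{num:functorial-coniv-ssp-E-coef} together with the uniqueness clause of \Cref{prop:truncated-Cousin}; the final chain $\Pi_{q-p}^x(X_{(x)})\simeq \Pi_{q-p}^x(X^h_{(x)})\simeq H^p_x(X_\zar,\uPi_q)\simeq H^p_x(X_\nis,\uPi_q)$ is then read off from the previous step applied to the Zariski and Henselian local rings. The step I expect to be the main obstacle is precisely this Zariski-to-Nisnevich transfer, i.e.\ the Bloch--Ogus--Gabber phenomenon: it works here only because the preceding proposition was established for all essentially smooth local $k$-schemes (in particular Henselizations), so the Gersten property is genuinely available Nisnevich-locally, and because the Cousin-complex terms are Brown--Gersten/flasque for both topologies at once; everything else is a formal unwinding of the constructions of the previous sections.
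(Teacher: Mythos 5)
Your proposal is correct and follows essentially the same route the paper intends: the preceding proposition gives the collapsing for all essentially smooth (semi-)local $k$-schemes (including Henselizations), which via \Cref{prop:carac-Gersten} yields the $\zar$- and $\nis$-local Gersten property, and then \Cref{prop:truncated-Cousin}, the subsequent proposition with \Cref{prop:q-large} for the borderline dimension, the functoriality of \Cref{num:functorial-coniv-ssp-E-coef}, and the standard flasque/Brown--Gersten comparison of the two topologies assemble into assertions (1)--(3). The paper itself leaves this assembly implicit, so your write-up is a faithful (and slightly more detailed) version of its argument.
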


\begin{remark}\label{morel-CM}
We can apply the above general formalism in the particular context as discussed in \Cref{ex:rep}, to the cohomotopy theory with supports $\Pi^\cX_*$ with values in the punctual topos $\Set$ (as in~\Cref{ex:rep_cohtp_functor}) associated to an $\#A^1$-local space  $\cX$. In particular, $\Pi^\cX_*$ satisfies the Gabber property and \Cref{cor:unstable-BOG} applies to the $\#A^1$-homotopy sheaves $\piA_q(\@X)=\_{\Pi}^\cX_q$. More explicitly, it shows that over any field, certain $\AA^1$-homotopy sheaves are Cohen-Macaulay in the sense of \cite{Hart66}, up to some truncation. We note that over a perfect field $k$, it follows from \cite[Theorem 6.1]{MorelLNM} that for any pointed simplicial sheaf $\cX$ on the Nisnevich site $\Sm_k$, the sheaf of groups $\pi_1^{\AA^1}(\cX)$ is homotopy Cohen-Macaulay in the above sense over $X_t$ (for $t=\zar, \nis$) for any smooth $k$-scheme $X$. Our result extends this result to any field $k$ and  
also extends recent works of \cite{DFJ22} and \cite{DKO} on Cousin resolutions to the unstable setting.

\end{remark}

Finally, given the current technology, one can improve the previous results
 and get a weaker Gersten property over positive dimensional bases.
\begin{lemma}(Effaceability)\label{eff2}
Assume $\Pi_*$ satisfies the Gabber property over $\Sm_S$,
 and let $e=\dim(S)$.

Let $X$ be a smooth affine $S$-scheme, $x \in X$ be a point,
 and $Z\subset X$ be a closed subscheme of codimension $p>e$.
 Then there is a Nisnevich neighbourhood $U$ of $x$ in $X$, a closed subscheme $T\subset U$, of codimension $p-1$,
 containing $Z_U := Z\cap U$, $i:Z_U \rightarrow T$,
 such that for any integer $q\geq0$, the following pointed map is trivial:
$$\Pi_q(U,Z_U)\xrightarrow{\ i_*\ } \Pi_q(U,T).$$
\end{lemma}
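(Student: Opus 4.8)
The plan is to reproduce the argument of \Cref{eff} over the positive‑dimensional base $S$, the single new ingredient being a \emph{relative} form of Gabber's geometric presentation lemma; the hypothesis $p>e$ is exactly what makes such a lemma applicable. Let me first record the dimension bookkeeping that explains the role of $p>e$. Shrinking $X$ around $x$, we may assume it is smooth over $S$ of pure relative dimension $n$, so that near $x$ one has $\dim X=n+e$ and $\dim Z=\dim X-\codim_X Z=n+e-p$. For a smooth affine $S$-scheme $V$ of relative dimension $n-1$ one has $\dim V=n-1+e$, and the inequality
$$\dim Z=n+e-p\ \le\ n-1+e=\dim V,$$
which is necessary for a closed subscheme of $\AA^1_V$ to be finite over $V$ via the projection, is equivalent to $p\ge e+1$, that is, to the standing hypothesis.

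Next I would invoke the relative geometric presentation lemma over $S$ --- the Nisnevich‑local analogue over an arbitrary Noetherian base of the presentations used in \cite{CHK} over a field: since $X$ is smooth affine over $S$ and $\codim_X Z=p>\dim S=e$, there exist a Nisnevich neighbourhood $u\colon U\to X$ of $x$, a smooth affine $S$-scheme $V$ of relative dimension $n-1$, a closed subscheme $W\subset\AA^1_V$ whose composite with the projection $\pi\colon\AA^1_V\to V$ is finite, and an \emph{excisive} morphism of pairs
$$f\colon (U,Z_U)\longrightarrow (\AA^1_V,W),\qquad Z_U:=Z\times_X U,$$
that is, $f$ is étale, $f^{-1}(W)=Z_U$, and $f$ restricts to an isomorphism $Z_U\xrightarrow{\ \sim\ }W$. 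Granting this, put $F:=\pi(W)_{\red}\subset V$, so that $W\subset\AA^1_F$ and, since $W\to V$ is finite, $\AA^1_F$ has codimension $p-1$ in $\AA^1_V$; hence $T:=f^{-1}(\AA^1_F)\subset U$ is, $f$ being étale, a closed subscheme of codimension $p-1$ containing $Z_U$, and we write $i\colon Z_U\hookrightarrow T$, $k\colon W\hookrightarrow\AA^1_F$ for the inclusions.

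Exactly as in the proof of \Cref{eff}, functoriality of $\Pi_*$ together with $f^{-1}(W)=Z_U$ and $f^{-1}(\AA^1_F)=T$ gives a commutative square
$$
\xymatrix@R=10pt@C=22pt{
\Pi_q(\AA^1_V,W)\ar[r]^{k_*}\ar_{f^*}[d]^{\simeq} & \Pi_q(\AA^1_V,\AA^1_F)\ar[d] \\
\Pi_q(U,Z_U)\ar^{i_*}[r] & \Pi_q(U,T)
}
$$
whose left vertical arrow is an isomorphism because $\Pi_*$ is $\nis$-local and $f$ is excisive. By \Cref{keylemma}, applied to the smooth affine $S$-scheme $V$, the closed subscheme $W\subset\AA^1_V$ finite over $V$, and $F=\pi(W)_{\red}$, the pointed map $k_*$ is trivial for all $q\ge 0$; chasing the square then shows that $i_*\colon\Pi_q(U,Z_U)\to\Pi_q(U,T)$ is trivial, as required. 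The main obstacle is the relative presentation lemma itself: over a field it is supplied by \cite{CHK} (and \cite{HK20} in the finite case), whereas over a base of positive dimension one needs a version that produces, Nisnevich‑locally around $x$, a projection making $Z$ finite over a smooth $S$-scheme of one lower relative dimension, and the existence of such a projection is precisely where the bound $\codim_X Z>\dim S$ enters. Once this geometric input is granted, everything else is a verbatim repetition of \Cref{eff} combined with \Cref{keylemma}.
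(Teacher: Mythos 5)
Your proposal follows the paper's proof exactly: the paper likewise reduces to \Cref{keylemma} via an excisive morphism $f:(U,Z_U)\to(\AA^1_V,W)$ with $W$ finite over $V$, supplied by a relative presentation lemma over the base $S$, for which it cites \cite[Th.~1.1]{DHKY} --- so the geometric input you flag as the ``main obstacle'' is indeed available and is the only new ingredient beyond \Cref{eff}. One correction to your dimension bookkeeping, though: the inequality $\dim Z = n+e-p \le n-1+e = \dim V$ simplifies to $p\ge 1$, not to $p\ge e+1$, so it does not explain the hypothesis $p>e$. The condition that actually forces $p>e$ is the fiberwise hypothesis of the relative presentation lemma: $Z_s$ must not contain an irreducible component of the fibre $X_s$ over $s=\pi(x)$, i.e.\ $\dim Z_s<\dim X_s=n$; since $\dim Z_s\le\dim Z=n+e-p$, this holds precisely when $p>e$, which is how the paper justifies the bound.
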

\begin{proof}
This is same as the proof for \Cref{eff}, except that we appeal to
 the relative presentation lemma \cite[Th. 1.1]{DHKY}.
 Under the stated hypothesis, we get that $\dim(Z_s)<\dim(X_s)$,
 so $Z_s$ does not contain any irreducible component of $X_s$, then we can apply the relative presentation lemma~\cite{DHKY}. 
 Indeed, $\dim(Z_s)<\dim(X_s)$: to see this note that $\dim(X_s)=d-e$ because $X$ is $S$-smooth, hence $\dim(Z_s) \leq \dim(Z)=d-p$ (by definition of $p$), so $p>e$ implies $d-p<d-e$.

As in the proof of \Cref{eff}, by the presentation lemma, there is an Nisnevich neighbourhood $U$ of $x$ in $X$ and also replacing $S$ by a Nisnevich neighbourhood of $s=\pi(x)$ in $S$, we have excisive pair $f:(U,Z_U) \rightarrow (\AA^1_V,W)$
 for some closed pair $(\AA^1_V,W)$ such that $V$ is a smooth $S$-scheme $V$ and 
  the composite map $W \subset \A^1_V \xrightarrow{\pi} V$ is finite
 (take $f=\varphi|_U$ in the notation of \cite[3.1.1]{CHK}).
 Consider the closed subscheme $F:=\pi(W)_{red} \subset V$ and $T= f^{-1}(\AA^1_F)$.
 Note that there is an inclusion $i:Z_U \rightarrow T$.
 Hence there is a commutative diagram of pointed sets 
$$
\xymatrix@R=10pt@C=20pt{
 \Pi_n(\A^1_V,T) \ar[r] \ar_{f^*}[d]^{\simeq} &  \Pi_n(\A^1_V,\A^1_F) \ar[d] \\
  \Pi_n(U,Z_U) \ar^{i_*}[r] &  \Pi_n(U,T)
}
$$
and excision for $f$ combined with \Cref{keylemma} allows us to conclude.
\end{proof}

As a corollary and using \Cref{prop:truncated-degeneracy}, we immediately get:
\begin{corollary}\label{cor:overbase}
Assume $\Pi_*$ is a cohomotopy theory with supports defined on $\Sm_S$. Assume $\Pi_*$ satisfies the Gabber property.

Then for any essentially smooth local $S$-scheme $X$ of absolute dimension $e$,
 for any $q \geq e$, the augmented naively $e$-cotruncated coniveau unstable complex:
\begin{align*}
\bigoplus_{x \in X^{(e)}} \Pi_{q-e}^x\big(X_{(x)}^h\big) \xrightarrow{d_1^{e,q}} \cdots 
 \rightarrow \bigoplus_{x \in X^{(q-1)}} \Pi_1^x\big(X_{(x)}^h\big)
 \xRightarrow{d_1^{q-1,q}} \bigoplus_{s \in X^{(q)}} \Pi_0^s\big(X_{(s)}^h\big)
 \xrightarrow{\gamma} \Pi_0(X^{\leq q}_+)
\end{align*}
is exact in the sense of \Cref{unstable-cx}.
\end{corollary}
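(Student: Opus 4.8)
The plan is to recognise the displayed complex as the bottom row of the coniveau machine of \Cref{num:coniveau_ssp}--\Cref{df:two_exact_couples} and to feed it into the truncated degeneracy criterion \Cref{prop:truncated-degeneracy}, the effaceability hypothesis of that criterion being supplied by \Cref{eff2}. Concretely, fix $X$ as in the statement (essentially smooth and local over $S$) and restrict $\Pi_*$ to $X_\nis$. As in \Cref{num:coniveau_ssp}, view the coniveau tower $X^{\leq *}_+$ as a (decreasing) tower over $X_+$ in $\C^{op}_*$, apply $\Pi_*$ to the dual octahedron diagram \eqref{eq:octaedron-coniv} to get the double exact couple \eqref{eq:double-ec-coniv}, and use the dictionary $X_p \leftrightarrow X^{\leq p}$, $F_p \leftrightarrow X^{=p}$, $G_p \leftrightarrow X^{>p}$ between \Cref{df:two_exact_couples} and the present notation. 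Since $X$ is finite dimensional, the coniveau tower is $d$-bounded with $d=\dim X$, so every hypothesis of \Cref{prop:truncated-degeneracy} except its condition (i) is automatic, taking the cut-off of that proposition to be the integer $e$ of the statement and keeping $q\geq e$ fixed.

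The heart of the argument is verifying condition (i) of \Cref{prop:truncated-degeneracy} from effaceability. That condition asks that, for the relevant $p$ and degrees, the maps $\bar\alpha\colon \Pi_{*}(X^{>p})\to\Pi_{*}(X^{>p-1})$ be trivial. Unwinding the cofiber sequence defining $X^{>p}$, one has $\Pi_n(X^{>p})=\colim_{Z^{*}\in\flag(X)}\Pi_n\bigl(X_{+}/(X-Z^{p+1})_{+}\bigr)$, a \emph{filtered} colimit (over the poset of flags) of cohomotopy groups with supports in closed subschemes of codimension $\geq p+1$, and $\bar\alpha$ is the map induced by weakening the codimension threshold. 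By \Cref{eff2}, applied Nisnevich-locally around the closed point of $X$ and using that $X$ is essentially smooth over $S$, every class supported in a closed subscheme $Z$ of codimension exceeding the threshold is annihilated after push-forward along a closed immersion $Z\hookrightarrow T$ with $\codim_X T=\codim_X Z-1$. Because $\flag(X)$ is filtered and $\Pi_*$ converts the relevant diagrams into filtered colimits of long homotopy sequences (\Cref{prop:colimits-htp-seq}), this ``each class is effaceable'' statement is precisely the triviality of $\bar\alpha$ in the codimension range dictated by \Cref{eff2}, i.e. exactly the range $p\geq e$ (and $p\leq q$) appearing in condition (i); for $p$ with $X^{>p}$ already trivial (that is $p\geq\dim X$) there is nothing to check.

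With condition (i) in hand, \Cref{prop:truncated-degeneracy} yields exactness, in the sense of \Cref{df:coaugmented_cohtp_cpx}, of the $(q-e)$-truncated cohomotopical complex $\Pi_{q-e}(X^{=e})\to\cdots\to\Pi_0(X^{=q})$; identifying each term via \Cref{cor:coniveau_ssp_refined} reproduces the displayed complex $\bigoplus_{x\in X^{(e)}}\Pi_{q-e}^x(X_{(x)}^h)\to\cdots\to\bigoplus_{s\in X^{(q)}}\Pi_0^s(X_{(s)}^h)$. It then remains to check that the terminal co-augmentation $\gamma\colon\bigoplus_{s\in X^{(q)}}\Pi_0^s(X_{(s)}^h)\to\Pi_0(X^{\leq q}_+)$ does not disturb exactness at the last spot: since $q\geq e$ one has $X^{\leq q}=X$, and the required statement follows from the long homotopy sequence of the fiber sequence $X^{=q}\to X^{\leq q}_+\to X^{\leq q-1}_+$ together with the relation (5) of \Cref{df:two_exact_couples}, exactly as in the proof of the un-truncated coniveau-exactness proposition that precedes \Cref{cor:unstable-BOG} (and, at the boundary values of $q$, via \Cref{prop:q-large}).

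The only genuine work is the book-keeping inside the second step: lining up the ``codimension $>e$'' threshold of \Cref{eff2} with the index range in condition (i) of \Cref{prop:truncated-degeneracy}, being careful about the shift between ``$X^{>p}$'' and ``supports of codimension $\geq p+1$'', and justifying that the filtered colimit over $\flag(X)$ legitimately promotes the pointwise effaceability of \Cref{eff2} to the actual triviality of the structural maps $\bar\alpha$ — this is where the exactness of filtered colimits of long homotopy sequences (\Cref{prop:colimits-htp-seq}) is indispensable. Everything else is a direct appeal to results already established.
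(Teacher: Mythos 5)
Your proposal is correct and follows exactly the route the paper intends: the paper offers no written proof beyond the phrase ``As a corollary and using \Cref{prop:truncated-degeneracy}, we immediately get,'' and your argument is precisely that deduction spelled out — translating the effaceability of \Cref{eff2} through the filtered colimit over $\flag(X)$ (via \Cref{prop:colimits-htp-seq}) into condition (i) of \Cref{prop:truncated-degeneracy} under the dictionary $X_p,F_p,G_p \leftrightarrow X^{\leq p},X^{=p},X^{>p}$, and identifying the terms by \Cref{cor:coniveau_ssp_refined}. Your extra care with the co-augmentation $\gamma$ and the codimension book-keeping only makes explicit what the paper leaves implicit.
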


\subsection{Artin-Mazur \'etale homotopy}\label{sec:AM-htp}

\begin{notation}
In this section our goal is to study natural cohomotopy theories with supports associated to the \emph{\'etale homotopy types} in the sense of Artin-Mazur,
 via the formalism developed in the previous section.

The \'etale homotopy type $\Et(X)$ of a scheme $X$ has been introduced by Artin and Mazur in \cite{AM69}
 --- we follow the notation of Friedlander \cite{FriedEt}.
 More recently, the construction has been formulated in terms of $\infty$-categories after To\"en and Vezzosi,
 and we recall the definition from \cite{HoyoisGal} as we find it illuminating.
 Given a scheme $X$, we let $\ihSh(X_\et)$ be the $\infty$-topos of hypercomplete sheaves.\footnote{See \cite[p. 663]{LurieHTT}.
 We use this level of generality for the sake of presentation.
 As we restrict to essentially smooth schemes over a separably closed field, the reader can freely
 use the $\infty$-category of \'etale sheaves.}
 The obvious (unique) morphism of sites $\varnothing \rightarrow X_\et$ induces the canonical geometric morphism of $\infty$-topos
 $p^*:\!S=\iSh(\varnothing) \rightarrow \ihSh(X_\et)$, the constant sheaf functor. As a morphism of $\infty$-topos, it always admits
 a right adjoint, but as it commutes with finite limits, it also admits a pro-left adjoint
 $p_!:\iSh(X_\et) \rightarrow \pro-\iSet$, with values in the $\infty$-category of pro-spaces.
 With this notation, we put: $\Et(X):=p_!p^*(*)=p_!(X)$, where we have abusively denoted by $X$ the constant \'etale sheaf on $X$ with value $*$.\footnote{After To\"en-Vezzosi-Lurie,
 this is also called the \emph{shape} of the $\infty$-topos $\ihSh(X_\et)$.}
 It is proved in \cite[Cor. 5.6]{HoyoisGal} that this definition coincides with the one of Artin-Mazur if $X$ is locally connected
 (e.g. locally noetherian).

Note that $\Et(X)$ behaves like a homology theory in $X$: it is covariant with respect to arbitrary maps of schemes,
 and contravariant with respect to \'etale covers. For this reason, we will consider the associated cohomotopy theory
 with coefficients in an arbitrary pro-space.\footnote{Note also that according to a fundamental theorem of Artin and Mazur (\cite[Th. 11.1]{AM69}), $\Et(X)$ is profinite whenever $X$ is geometrically unibranch. So in this case, one can rather consider $\Et(X)$ as taking values in the $\infty$-category of profinite spaces as in \cite{Q08}. In particular, for our purpose, one could as well restrict to coefficients in a profinite space instead of an arbitrary pro-space.}
\end{notation}
\begin{definition}
We consider a category of schemes $\Sch$ as in \Cref{num:cohtp_support}.
 Let $\mathbb G$ be a pro-space, that we view as a pro-$\infty$-groupoid of coefficients.
 We define the \emph{\'etale cohomotopy theory with supports and coefficients in $\Coef$} by associating to a closed $\Sch$-pair $(X,Z)$
 the $\pi_*$-structure in $\Set$ (see \Cref{df:pi*-structure}):
$$
\Pi_{\ast}^{\et}(X, Z; \Coef):=\pi_*\Map\big(\Et(X/X-Z),\Coef\big)
$$
where $\Et(X/X-Z)$ denotes the homotopy cofiber of the map $\Et(X-Z) \rightarrow \Et(X)$
 and $\Map$ denotes the mapping space in the $\infty$-category $\pro-\iSet$.
\end{definition}
Indeed, it follows as in \Cref{rem:cohtp_support} that the above definition
 does indeed define a cohomotopy theory with supports. Besides, it is Nisnevich-local
 as the \'etale homotopy type satisfies Nisnevich excision. In fact, one can define this theory
 in much simpler term. Let us write $\Coef=(\Coef_i)_{i \in I}$.
 Then, one gets:
$$
\Pi_*^{\et}(X, Z; \Coef) \simeq \pi_*\Big[\lim_{i \in I} \Map_{\ihSh(X_\et)}\big(X/X-Z,\Coef_{i,X}\big)\Big]
$$
where $X/X-Z$ is the homotopy cofiber of the map $(X-Z) \rightarrow X$ computed on the associated represented sheaves
 in $\ihSh(X_\et)$, and $\Coef_{i,X}=p^*\Coef_i$ is the constant \'etale sheaf on $X$ with values in the space
 $\Coef_i$.\footnote{This easily follows from the definition of $\Et(X)$ and the fact $p_!$ is a pro-left adjoint to $p^*$.}

The main result of this section is the following.
\begin{theorem}\label{prop:et-gabber}
Let $k$ be a separably closed field and $\Coef$ be a pro-space.
 Then the cohomotopy theory with supports $\Pi_*(-;\Coef)$ restricted to the category $\Sm_k$ satisfies the Gabber property
 (\Cref{def:gabber}).
\end{theorem}

This follows easily from the following observation which is of independent interest. 
\begin{lemma}\label{lem:et-gabber} Let $k$ be a separably closed field.
 Let $X$ be a smooth affine $k$-scheme and $(X, Z)$ be a closed pair.
Consider the diagram of schemes
$$\xymatrix@C=20pt@R=10pt {
\A^1_X \ar[r]^j \ar[rd]_{\pi} & \#P^1_X \ar[d]^/-2pt/{\-\pi} \ar[d] & X \ar[l]_{s_{\infty}} \ar@{=}[dl] \\
 & X & 
}$$
where $j$ (resp. $s_\infty$, $\pi$) is the obvious open immersion
 (resp. section at $\infty$, canonical projection).
Then the induced diagram
$$
\xymatrix@C=30pt@R=4pt{
\Et (\#A^1_X/\#A^1_X-\#A^1_Z) \ar[rd]_{\pi_*} \ar[rr]^{j_*} & & \Et (\#P^1_X/\#P^1_X-\#P^1_Z) \\
 & \Et (X/X-Z) \ar[ru]_{s_{\infty*}} &   
}
$$
is commutative in the $\infty$-category $\pro-\iSet$.
\end{lemma}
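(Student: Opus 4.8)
The plan is to leverage three structural properties of the shape functor recalled above: it is a functor $X\mapsto\Et(X)$ from schemes to $\pro\text{-}\iSet$; it equals the pro-left adjoint $p_!$ and so commutes with colimits, and in particular $\Et(Y/Y-W)$ is \emph{by definition} the cofiber of $\Et(Y-W)\to\Et(Y)$; and, since $X$ is smooth (hence $\AA^1_X,\PP^1_X$ and their open subschemes are geometrically unibranch), the étale homotopy types in question are profinite by \cite[Th.~11.1]{AM69}, so we may argue with profinite spaces. Writing $\mathcal S_j$ for the commutative square of schemes $\big(\AA^1_{X-Z}\hookrightarrow\AA^1_X\;;\;j,\,j|\;;\;\PP^1_{X-Z}\hookrightarrow\PP^1_X\big)$ and $\mathcal S_c$ for the analogous square with $j$ replaced by $c:=s_\infty\circ\pi$, the maps $j_*$ and $s_{\infty*}\circ\pi_*$ of the statement are, by the above, precisely the maps on horizontal cofibers induced by $\Et(\mathcal S_j)$ and $\Et(\mathcal S_c)$. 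Hence it suffices to produce, \emph{naturally in the essentially smooth $k$-scheme $W$} (we then specialise to $W=X$ and $W=X-Z$), a homotopy
$$
\Et(j_W)\ \simeq\ \Et\big(s_{\infty,W}\circ\pi_W\big)\ \colon\ \Et(\AA^1_W)\longrightarrow\Et(\PP^1_W);
$$
evaluated on the open immersion $(X-Z)\hookrightarrow X$ this gives a homotopy $\Et(\mathcal S_j)\simeq\Et(\mathcal S_c)$ rel the common horizontal arrows, and passing to cofibers concludes. (Alternatively one may first reduce to $Z=\varnothing$ via the localisation long exact sequence \eqref{eq:htp-long-ex}, as in the remark after \Cref{def:gabber}, which lands on the same displayed statement with $W=X$.)

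To compare $\Et(j_W)$ with $\Et(s_{\infty,W}\circ\pi_W)$ I would factor through the projection $\bar\pi_W\colon\PP^1_W\to W$. Since $\PP^1_W=\PP^1_k\times_k W$, $\PP^1_k$ is proper and $k$ is separably closed, proper base change for étale homotopy types (\cite{AM69}) gives a Künneth equivalence $\Et(\PP^1_W)\simeq\Et(\PP^1_k)\times\Et(W)$, natural in $W$; under it a map with target $\Et(\PP^1_W)$ is a pair of maps, to $\Et(\PP^1_k)$ and to $\Et(W)$. For $\Et(j_W)$ these components are $\Et(\iota)\circ\Et(\mathrm{pr}_{\AA^1_k})$ and $\Et(\bar\pi_W\circ j_W)=\Et(\pi_W)$, where $\iota\colon\AA^1_k\hookrightarrow\PP^1_k$ is the standard chart; for $\Et(s_{\infty,W}\circ\pi_W)$ they are $\mathrm{const}_\infty$ and $\Et(\pi_W)$. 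Therefore the two maps coincide as soon as
$$
\Et(\iota)\ \simeq\ \mathrm{const}_\infty\ \colon\ \Et(\AA^1_k)\longrightarrow\Et(\PP^1_k),
$$
and the homotopy so obtained, built from this single absolute homotopy together with the natural Künneth equivalence, is automatically natural in $W$, as required.

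It remains to show that $\Et(\iota)$ is null-homotopic, with $k=\bar k$. If $\Char k=0$ this is immediate: $\AA^1_{\bar k}$ is simply connected and acyclic, hence of contractible étale homotopy type. If $\Char k=p>0$, both homotopy types are profinite and $\Et(\PP^1_{\bar k})$ is simply connected, so it is enough to check nullity after $\ell$-completion for every prime $\ell$. For $\ell\neq p$ the affine curve $\AA^1_{\bar k}$ has trivial tame fundamental group and $\ell$-cohomological dimension $\le 1$ (Artin), so $\Et(\AA^1_{\bar k})^{\wedge}_\ell\simeq *$ and any map out of it is $\ell$-adically null. For $\ell=p$ one has $H^i_{\et}(\PP^1_{\bar k},\Z/p)=0$ for $i>0$ (Artin–Schreier, together with $\mathrm{cd}_p(\PP^1_{\bar k})\le 1$), so $\PP^1_{\bar k}$ is mod-$p$ acyclic, $\Et(\PP^1_{\bar k})^{\wedge}_p\simeq *$, and any map into it is $p$-adically null. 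In either case $\Et(\iota)$ is null, which finishes the argument. I expect this last step — the vanishing of $\Et(\iota)$ in positive characteristic — to be the main obstacle; it, and the Künneth input of the previous paragraph, are exactly where the hypothesis "$k$ separably closed" is genuinely used, whereas in characteristic zero the whole lemma is essentially formal.
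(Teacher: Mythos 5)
Your proof is correct and follows the paper's strategy for the first two reductions --- passing to the absolute case via the cofiber description of $\Et(X/X-Z)$ (you are in fact more careful than the paper here, insisting on a homotopy natural in $W$ so that it descends to cofibers), and then using the K\"unneth splitting $\Et(\PP^1_W)\simeq\Et(\PP^1_k)\times\Et(W)$ to reduce to $W=\Spec(k)$; the paper cites Chough's K\"unneth theorem for profinite types where you invoke proper base change, but this is the same step. Where you genuinely diverge is the key computation in characteristic $p$: the paper identifies $\Et(\PP^1_{\bar k})\simeq K(\widehat{\Z}(1),2)$, computes $[\,\Et(-),\Et(\PP^1_{\bar k})]$ as continuous $H^2(-,\widehat{\Z}(1))=\varprojlim_n H^2_{\et}(-,\mu_n)$, and checks directly that $j^*$ and $\pi^*s_\infty^*$ agree on these groups (both kill the fundamental class, by \cite[Lem.~4.1.3]{CHK}), so that $j_{\et}=j^*(\id)=\pi^*s_\infty^*(\id)$. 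You instead prove the equivalent statement that $\Et(\iota)$ is null by fracturing the simply connected profinite target into its $\ell$-completions and killing each component: for $\ell\neq p$ the $\ell$-completed source $\Et(\AA^1_{\bar k})^\wedge_\ell$ is contractible, for $\ell=p$ the $p$-completed target $\Et(\PP^1_{\bar k})^\wedge_p$ is contractible. The two arguments rest on the same cohomological vanishings ($H^{>0}(\AA^1_{\bar k},\Z/\ell)=0$ for $\ell\neq p$ and $H^{>0}(\PP^1_{\bar k},\Z/p)=0$), but yours avoids identifying the homotopy type of $\Et(\PP^1_{\bar k})$ at the cost of invoking the Artin--Mazur arithmetic fracture for simply connected profinite spaces; you should make that input explicit, and note that your parenthetical $\mathrm{cd}_p(\PP^1_{\bar k})\le 1$ is not quite the right citation (the general bound is $\dim+1$) --- the vanishing of $H^{\ge 2}_{\et}(\PP^1_{\bar k},\Z/p)$ follows instead directly from Artin--Schreier and $H^{\ge 1}(\PP^1,\sO)=0$.
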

\begin{proof}
Note that one needs only to prove the diagram is commutative in the associated homotopy category,
 as this will produce a homotopy between $j_*$ and $s_{\infty*} \circ \pi_*$.
 By definition of $\Et(X/X-Z)$ as a homotopy cofiber, one is reduced to the case $X=Z$.
 That is, we need to prove that the following diagram commutes in $\Ho \pro-\iSet$:
$$
\xymatrix@C=30pt@R=-4pt{
\Et (\#A^1_X) \ar[rd]_{\pi_*} \ar[rr]^{j_*} & & \Et (\#P^1_X). \\
 & \Et (X) \ar[ru]_{s_{\infty*}} &   
}
$$
Let $p:\PP^1_X \rightarrow \PP^1_k$ be the canonical projection.
 Then we can apply the K\"unneth formula for the \'etale homotopy type proved in \cite[Th. 5.3]{Chough} so that we get that the canonical map:
$$
\Et(\#P^1_X) \xrightarrow{(\-\pi_*,p_*)} \Et(\#P^1_k) \x \Et(X)
$$
is an equivalence.\footnote{Note that under our assumptions, both $\Et(\#P^1_k)$ and $\Et(X)$ are profinite so that \emph{loc. cit.} does apply.}
 So to prove the above diagram is homotopy commutative, one can post-compose by the equivalence $(\-\pi_*,p_*)$.
 This reduces to the case $X=\Spec(k)$.
We prove this separately depending on the characteristic of the base field $k$. 
\paragraph{\bf Case 1: characteristic $k=0$ } In this case we can assume $k\subset \#C$. Then by \cite[Corollary 12.10]{AM69}, we have that $\Et(\#A^1_{k})\simeq \^{\#A^1(\#C)}=*$, hence the diagram commutes, since both the maps are constant.
 \paragraph{\bf Case 2: characteristic $k=p>0$ } In this case $k$ is a separably closed field of positive characteristic.
 This implies that $\Et(\#P^1_{k})\simeq K(\^{\#Z}(1), 2)$ where the profinite group $\^{\#Z}(1)=\mu(k)$ is the profinite group of roots of unity.
 Note that $\mu(k)\simeq \ilim_n \mu_n(k)$.

Using the relation between continuous \'etale cohomology and \'etale homotopy we get:
$$
[\Et(\#P^1_{k}), \Et(\#P^1_{k})] \simeq [\Et(\#P^1_{k}), K(\^{\#Z}(1), 2)]
 \simeq H^2_{{\rm cts}}(\#P^1_{k}, \^{\#Z}(1))=\ilim_n H^2_{\et}((\#P^1_{k}, \mu_n(k))
$$
where the first two groups are Hom-groups taken in the homotopy category of $\pro-\iSet$.
  By \cite[Equation (4.1)]{CHK}, we further have
  \begin{align}
  H^2_{\et}((\#P^1_{k}, \#Z/n)&=\begin{cases}
 0  &  p \mid n \\ 
   H^0_{\et}((\Spec k, \#Z/n(-1)) & p \nmid n.
    \end{cases}
  \end{align}

In the case $p \nmid n$, as noted in \cite[Lemma 4.1.3]{CHK}, the restriction of the maps $j^*, s_{\infty}^*$
 from  $H^2_{\et}((\#P^1_{k}, \#Z/n)$
 on the factor $H^0_{\et}((\Spec k, \#Z/n(-1))$ is 0.
 As a consequence the maps 
 $$j^*, \pi^*\circ s_{\infty}^* : [\Et(\#P^1_{k}), \Et(\#P^1_{k})] \to [\Et(\#A^1_{k}), \Et(\#P^1_{k})] $$ are equal.
 In particular, $j_{\et}=j^*(\id)= \pi^*\circ s_{\infty}^*(\id)=(s_{\infty})_{\et}\circ \pi_{\et}$. Hence the lemma holds.
\end{proof}

\section*{Appendix}

\subsection*{Uniqueness of truncated Cousin complexes}

The goal of this appendix is to give details on a proof of a result of \cite{Hart66},
 showing the uniqueness of Cousin resolutions.
 This is used to show that the result can be extended to the case
 of truncated Cousin resolutions that appeared naturally
 in our unstable context. 
 
 Throughout this section we recall the \Cref{notn:unstable-gersten}. Let $t=\zar, \nis$, and $X$ be a scheme.
The classical framework of $t$-local cohomology with supports can be extended
 to the non-abelian setting (see \cite[\textsection IV]{Hart66} for Zariski sheaves of abelian groups,
 and \cite[\textsection 4.3]{DFJ22} in the Nisnevich case).
 Given a pointed $t$-sheaf $F$ of sets, in $\Sh(X_t)_*$,
 and $Z \subset X$ a closed subscheme, one puts:
 $$\Gamma_Z(X,F)=\{\rho \in F(X) \mid \forall x \notin Z, \rho_x=* \}.$$
 Similarly, $\uG_Z(F)$ is the pointed $t$-sheaf
 $V \mapsto \uG_{Z \times_X V}(F|_V)$.
 If $T \subset Z$, one let $\uG_{Z/T}(F)$ be the cokernel,
 in the category of pointed $t$-sheaves, of the natural pointed map
 $\uG_T(F) \rightarrow \uG_Z(F)$.

First we review the well-known results~\cite[Lemma 2.2, Proposition 2.3]{Hart66}. 
\begin{lemma}(\cite[Lemma 2.2]{Hart66})
Let $X \supset Z \supset Z'$ such that $Z'$ is stable under specialization and such that every $x\in Z-Z'$ is maximal (ordered by the specialization) in $Z$. 

Given a sheaf of abelian groups $F$ on $X$ with supports in $Z$, 
\noindent
\begin{enumerate}
\item
the sheaf $\_{H}^0_{Z/Z'}(F)$ lies on the $Z/Z'$-skeleton of $X$ and  the canonical map $$F\to \_{H}^0_{Z/Z'}(F) $$ is $Z'$-isomorphism, \ie  the kernel and cokernel are supported on $Z'$.

\item given a $Z'$-isomorphism $$F\to G$$ into a sheaf $G$ which lies on the  $Z/Z'$-skeleton, it factors uniquely (unique upto unique isomorphism) through the canonical map $$\xymatrix{
F\ar[rd] \ar[rr]& & G\\ 
& \_{H}^0_{Z/Z'}(F) \ar[ru]^{\simeq}& 
}$$
\end{enumerate} \label{kercoker0}
\end{lemma}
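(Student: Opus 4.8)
The plan is to transcribe the argument of \cite[Lemma 2.2]{Hart66} into the present $t$-local, sheaf-theoretic setting. Here $\underline{H}^0_{Z/Z'}(F)$ denotes the $0$-th \emph{relative} local cohomology sheaf, sitting in the long exact sequence $\cdots\to\underline{H}^i_{Z'}(F)\to\underline{H}^i_Z(F)\to\underline{H}^i_{Z/Z'}(F)\to\underline{H}^{i+1}_{Z'}(F)\to\cdots$; this is not the naive cokernel $\uG_Z(F)/\uG_{Z'}(F)$, the two differing already when $F$ is not flasque (e.g. $F=\mathcal O_X$ on a trait, $Z=X$, $Z'$ the closed point). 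The first thing to record is the elementary topological fact that, writing $W=Z-Z'$, the set $W$ is a \emph{discrete} subspace of $X$: for $x\in W$ one has $\overline{\{x\}}\cap W=\{x\}$, since any further point of $\overline{\{x\}}$ lying in $Z$ would be a proper $Z$-specialization of a maximal point, hence itself not maximal, hence in $Z'$; and $\overline{\{x\}}\cap W$ is closed in $W$. In particular every proper $Z$-specialization of a point of $W$ lies in $Z'$. This is exactly the combinatorics that lets the ``$Z/Z'$-skeleton'' behave like a disjoint family of generic points.

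For item (1), the $Z'$-isomorphism statement is the formal one: the kernel of $F\to\underline{H}^0_{Z/Z'}(F)$ is $\uG_{Z'}(F)=\underline{H}^0_{Z'}(F)$, supported on $Z'$ by construction, while restricting to the open $X-Z'$ makes $\uG_{Z'}(F)$ and $\underline{H}^1_{Z'}(F)$ vanish, so that $\underline{H}^0_{Z/Z'}(F)|_{X-Z'}\simeq\underline{H}^0_W(F|_{X-Z'})\simeq F|_{X-Z'}$ (as $F$, being supported on $Z$, is supported on the \emph{closed} subset $W$ of $X-Z'$) with the canonical map the identity; hence the cokernel is supported on $Z'$. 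For the skeleton assertion, write $G=\underline{H}^0_{Z/Z'}(F)$. Using the restricted-product construction of \Cref{num:restricted-product} together with the discreteness of $W$, the adjunction maps $G\to i_{x*}i_x^{*}G$ ($x\in W$) assemble into a map $G\to{\prod}'_{x\in W} i_{x*}(i_x^{*}G)$, and I must show it is an isomorphism with $i_x^{*}G\simeq F_x$. This is a stalk computation: localizing at a point $y\in X$ and using excision one is reduced to the case where $Z'$ is the closed point of the local scheme, where the relative local cohomology $\underline{H}^0_{Z/Z'}(F)$ is evaluated exactly as in \cite[Prop.~2.1]{Hart66} (the computation underlying \Cref{prop:coniveau_ssp_refined}) and comes out as the restricted product, over the maximal points of $Z$, of the corresponding skyscraper stalks; maximality of the points of $W$ then forces, at $y$, only the factors at $y$ and its $W$-generizations to contribute, giving $i_x^{*}G\simeq F_x$. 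Thus $G$ lies on the $Z/Z'$-skeleton in the sense (suitably adapted) of \Cref{df:Cousin-cpx}.

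For item (2), let $F\to\mathcal G$ be a $Z'$-isomorphism with $\mathcal G$ on the $Z/Z'$-skeleton. Applying the functor $\underline{H}^0_{Z/Z'}(-)$ produces a commutative square whose top arrow is the canonical map $F\to\underline{H}^0_{Z/Z'}(F)$ and whose bottom arrow is $\mathcal G\to\underline{H}^0_{Z/Z'}(\mathcal G)$. Since $\mathcal G\simeq{\prod}'_{x\in W}i_{x*}M_x$ has no sections supported on $Z'$ and satisfies $\underline{H}^1_{Z'}(\mathcal G)=0$, item (1) applied to $\mathcal G$ makes its canonical map an isomorphism; and because $\underline{H}^0_{Z/Z'}$ kills sheaves supported on $Z'$, the $Z'$-isomorphism $F\to\mathcal G$ induces an isomorphism $\underline{H}^0_{Z/Z'}(F)\xrightarrow{\ \sim\ }\underline{H}^0_{Z/Z'}(\mathcal G)$. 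Composing gives the required factorization $F\to\underline{H}^0_{Z/Z'}(F)\xrightarrow{\ \sim\ }\mathcal G$; its uniqueness up to unique isomorphism follows by restriction to $X-Z'$, since any factorization $u\colon\underline{H}^0_{Z/Z'}(F)\to\mathcal G$ of $F\to\mathcal G$ must restrict there to the isomorphism already pinned down by item (1), and a morphism between two sheaves lying on the skeleton $W\subset X-Z'$ is determined by its restriction to $X-Z'$ (it is the collection of its components $M_x\to M'_x$ at the points of $W$).

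The main obstacle is the stalk computation in item (1) identifying $\underline{H}^0_{Z/Z'}(F)$ with ${\prod}'_{x\in W} i_{x*}(F_x)$: because this sheaf genuinely receives a correction term from $\underline{H}^1_{Z'}(F)$ and is not the presheaf cokernel $\uG_Z(F)/\uG_{Z'}(F)$, one must run the local cohomology long exact sequence over each local scheme and invoke the generic-point computation of \cite[Prop.~2.1]{Hart66}. Everything else reduces to the discreteness of $W$ and to formal manipulations with the functor $\underline{H}^0_{Z/Z'}$.
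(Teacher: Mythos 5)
The paper does not prove this lemma: it is recalled verbatim from \cite[Lemma 2.2]{Hart66} and used as a black box in the appendix, so there is no in-paper argument to compare against. Your writeup follows the architecture of Hartshorne's proof (local-cohomology long exact sequence for the $Z'$-isomorphism, identification of $\underline{H}^0_{Z/Z'}(F)$ with a sum of skyscrapers for the skeleton claim, functoriality of $\underline{H}^0_{Z/Z'}$ for the universal property), and your observation that $\underline{H}^0_{Z/Z'}$ is the derived object rather than the naive quotient $\underline{\Gamma}_Z/\underline{\Gamma}_{Z'}$ is correct and worth making explicit. The kernel/cokernel part of (1) and part (2) are essentially sound, \emph{provided} you replace ``restricting to the open $X-Z'$'' by a stalkwise argument: $Z'$ is only assumed stable under specialization (think $Z'=Z^{p+1}$), so $X-Z'$ is usually not open; what is true is that $\underline{H}^i_{Z'}(F)_y=\varinjlim_{Y'\subseteq Z'\ \mathrm{closed}}\underline{H}^i_{Y'}(F)_y=0$ for $y\notin Z'$, which yields the same conclusion on stalks.

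The genuine gap is the skeleton assertion, which is the real content of the lemma. First, $W=Z-Z'$ is \emph{not} discrete: your argument only shows $\overline{\{x\}}\cap W=\{x\}$, i.e.\ points of $W$ are closed in $W$; they are in general not open in $W$ (take $X$ a surface, $Z=Z^1$, $Z'=Z^2$: no nonempty open of $X$ meets only one curve). Second, and more seriously, ``this is a stalk computation'' cannot close the argument: a sheaf is not determined by its stalks, so computing the stalks of $\underline{H}^0_{Z/Z'}(F)$ does not show it is a direct sum of skyscrapers --- one must produce a comparison map with $\bigoplus_{x\in W}i_{x*}(F_x)$ and prove it is a stalkwise isomorphism, and the stalk at a point $y\in Z'$ involves the term $\ker\big(\underline{H}^1_{Z'}(F)_y\to\underline{H}^1_Z(F)_y\big)$, which is exactly the hard part. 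Your proposed reduction ``to the case where $Z'$ is the closed point of the local scheme'' is also not available: after localizing at $y$, the set $Z'$ still contains all of its non-closed members (again $Z'=Z^2$ in a threefold). The missing idea is Hartshorne's: compute on a flasque resolution $F\to I^\bullet$. For a flasque sheaf $I$ the non-derived quotient $\underline{\Gamma}_Z(I)/\underline{\Gamma}_{Z'}(I)$ does lie on the $Z/Z'$-skeleton --- this is where the maximality of the points of $W$ and the colimit over closed pairs enter, and it is the computation behind \cite[Prop.~2.1]{Hart66} and the unnamed lemma of \Cref{notn:unstable-gersten} --- and the full subcategory of skeleton sheaves is closed under kernels, because any morphism $\bigoplus_x i_{x*}M_x\to\bigoplus_y i_{y*}N_y$ is diagonal (this is where $\overline{\{x\}}\cap W=\{x\}$ is actually used). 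Hence $\underline{H}^0_{Z/Z'}(F)=\ker\big(\underline{\Gamma}_{Z/Z'}(I^0)\to\underline{\Gamma}_{Z/Z'}(I^1)\big)$ lies on the skeleton, with stalk $F_x$ at $x\in W$ by the long exact sequence; with that in place your item (2) goes through as written.
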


\begin{corollary} \label{kercoker}
Let $F$ be a sheaf of abelian groups on $X$ and $X\supset Z\supset Z'$ (as above). Then the map $$\_{H}^0_{Z/Z'}(F)\to \_{H}^0_{Z/Z'}(\_{H}^0_{Z/Z'}(F)) $$ is $Z'$-isomorphism \ie the kernel and cokernel is supported on $Z'$.
\end{corollary}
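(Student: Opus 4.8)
The corollary is exactly \Cref{kercoker0} applied twice: the plan is to set $G := \_{H}^0_{Z/Z'}(F)$ and then feed $G$ back into \Cref{kercoker0} in the role of $F$. So the only real task is to check that $G$ satisfies the hypotheses of that lemma.

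The conditions on the triple $X \supset Z \supset Z'$ (namely $Z'$ stable under specialization and every $x \in Z - Z'$ maximal in $Z$) are conditions on the subspaces alone, so they are unaffected by changing the sheaf. It remains to see that $G$ is a sheaf of abelian groups on $X$ with supports in $Z$. It is abelian, being the cokernel, in the category of abelian $t$-sheaves, of the map $\uG_{Z'}(F) \to \uG_{Z}(F)$; and it has supports in $Z$, since it is a quotient of $\uG_Z(F)$, which has supports in $Z$ by construction. (Equivalently, by part (1) of \Cref{kercoker0} applied to $F$, the sheaf $G$ already lies on the $Z/Z'$-skeleton of $X$, and any such sheaf is concentrated at points of $Z - Z' \subset Z$.) With these verifications in hand, part (1) of \Cref{kercoker0}, applied with $F$ replaced by $G$, asserts precisely that the canonical map $G \to \_{H}^0_{Z/Z'}(G)$ is a $Z'$-isomorphism, which is the statement of the corollary.

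As a complementary remark, this map is in fact an isomorphism: since $G$ lies on the $Z/Z'$-skeleton, part (2) of \Cref{kercoker0} applied to the identity $\mathrm{id}_G \colon G \to G$ (which is trivially a $Z'$-isomorphism) factors it as $G \to \_{H}^0_{Z/Z'}(G) \xrightarrow{\sim} G$, forcing $G \to \_{H}^0_{Z/Z'}(G)$ to be an isomorphism. There is essentially no obstacle in this argument; the whole content is the bookkeeping needed to confirm that $\_{H}^0_{Z/Z'}(F)$ lands in the range of applicability of \Cref{kercoker0}, after which the corollary follows by invoking that lemma a second time.
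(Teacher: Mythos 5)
Your proof is correct and follows essentially the same route as the paper's one-line argument: apply \Cref{kercoker0}(1) to $G=\_{H}^0_{Z/Z'}(F)$, the only thing to check being that $G$ is an abelian sheaf with supports in $Z$. One minor caveat: $\_{H}^0_{Z/Z'}(F)$ is not literally the cokernel of $\uG_{Z'}(F)\to\uG_{Z}(F)$ (by the long exact sequence of the pair it is an extension of a subsheaf of $\_{H}^1_{Z'}(F)$ by that cokernel), but this does not affect the conclusion, and your parenthetical justification via the $Z/Z'$-skeleton is correct and suffices.
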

\begin{proof}
This follows by noting that for a sheaf of abelian groups $F$ on $X$, the sheaf $\_{H}^0_{Z/Z'}(F)$ is with supports in $Z$.
\end{proof}

\begin{theorem}\label{thm:unique-cousin}(assumptions as in~\cite[Proposition 2.3]{Hart66})
Let $F$ be a sheaf of abelian groups. Then there is a unique (unique upto unique isomorphism) augmented complex 
$$F\to C^{\Dot}$$
 such that 
 \noindent
 \begin{enumerate}

 \item  for each $p\geq 0$, $C^p$ lies on the $Z^p/Z^{p+1}$-skeleton.
 \item for each $p>0$ $C^p/\im(C^{p-1})$ has supports in $Z^{p+1}$.
 \item the map $F\to H^0(C^{\Dot})$ has kernel supported on $Z^1$ and $C^0/\im (F)$ is supported on $Z^1$.
 
 \end{enumerate}

 Furthermore, the complex $C^{\Dot}$ satisfies the conditions 
 \begin{enumerate} [label=\emph{(\roman*)}]
   \item  for each $p\geq 0$, $C^p$ lies on the $Z^p/Z^{p+1}$-skeleton.

 \item for each $p>0$ $H^p(C^{\Dot})$ has supports in $Z^{p+2}$.
 \item the map $F\to H^0(C^{\Dot})$ has kernel supported on $Z^1$ and cokernel is supported on $Z^2$.
 \end{enumerate}
\end{theorem}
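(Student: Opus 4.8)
The plan is to reduce the statement to, and then $t$-locally extend, the argument of \cite[Prop.~2.3]{Hart66}. Since everything is now read on the site $X_t$ ($t=\zar,\nis$), ``skeleton'', ``support'' and the functors $\uG_{Z/T}$, $\underline{H}^0_{Z/Z'}$ are interpreted there, but the only inputs required — \Cref{kercoker0} and \Cref{kercoker} — have already been set up in that generality, with ``skeleton sheaf'' meaning precisely the restricted-product shape ${\prod}'_{x}\,x_*(M_x)$ of \Cref{eq:rest-product}. For existence I would build the canonical augmented complex $F\to E^{\bullet}$ by the usual recursion: set $E^0:=\underline{H}^0_{X/Z^1}(F)$, and, given $E^p$, put $F^{p+1}:=\coker(d^{p-1})=\coker(E^{p-1}\to E^p)$ (for $p=0$, $\coker(F\to E^0)$); by \Cref{kercoker0}(1) the canonical map $F^p\to E^p=\underline{H}^0_{Z^p/Z^{p+1}}(F^p)$ is a $Z^{p+1}$-isomorphism, so $F^{p+1}$ is supported on $Z^{p+1}$, and one sets $E^{p+1}:=\underline{H}^0_{Z^{p+1}/Z^{p+2}}(F^{p+1})$ with differential the composite $E^p\twoheadrightarrow F^{p+1}\to E^{p+1}$. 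Then $E^p$ lies on the $Z^p/Z^{p+1}$-skeleton, so (1)=(i) holds; $E^p/\im(E^{p-1})=F^{p+1}$ is supported on $Z^{p+1}$, so (2) holds; $F\to E^0$ is a $Z^1$-isomorphism, so (3) holds. For the cohomological refinements, $H^p(E^{\bullet})$, resp.\ $\coker(F\to H^0(E^{\bullet}))$, is the kernel of the canonical map $F^{p+1}\to\underline{H}^0_{Z^{p+1}/Z^{p+2}}(F^{p+1})$, resp.\ $F^1\to\underline{H}^0_{Z^1/Z^2}(F^1)$, which by \Cref{kercoker0}(1) is supported on $Z^{p+2}$; hence (ii) and (iii) hold for $E^{\bullet}$. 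The construction is visibly functorial in $F$.

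For uniqueness the key tool is a universal property of $\underline{H}^0_{Z/Z'}$ extracted from \Cref{kercoker0}: for $N$ supported on $Z$ and $M$ lying on the $Z/Z'$-skeleton, precomposition with the canonical map gives a bijection $\Hom(\underline{H}^0_{Z/Z'}(N),M)\xrightarrow{\ \sim\ }\Hom(N,M)$ — indeed $N\to\underline{H}^0_{Z/Z'}(N)$ is a $Z'$-isomorphism by \Cref{kercoker0}(1), while $M$, being of the form $\underline{H}^0_{Z/Z'}(M)$ (cf.\ \Cref{kercoker}), has no nonzero subsheaf or quotient supported on $Z'$, so the factorization exists and is unique. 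Granting this, given any $F\to C^{\bullet}$ satisfying (1)--(3) I would construct the (unique) isomorphism of augmented complexes $E^{\bullet}\xrightarrow{\sim}C^{\bullet}$ degree by degree. For $p=0$, condition (3) says $F\to C^0$ is a $Z^1$-isomorphism into a $Z^0/Z^1$-skeleton sheaf, so \Cref{kercoker0}(2) yields a unique compatible isomorphism $E^0\cong C^0$. Inductively, transporting $F^{p+1}$ along $E^{\le p}\cong C^{\le p}$ identifies it with $\coker(d^{p-1}_C)$, supported on $Z^{p+1}$ by (2); the map induced by $d^p_C$ into the $Z^{p+1}/Z^{p+2}$-skeleton sheaf $C^{p+1}$ then gives, through the universal property, a map $E^{p+1}=\underline{H}^0_{Z^{p+1}/Z^{p+2}}(F^{p+1})\to C^{p+1}$ compatible with differentials; its cokernel is $\coker(d^p_C)$, supported on $Z^{p+2}$ by (2), and its kernel is $H^p(C^{\bullet})$ (resp.\ $\coker(F\to H^0)$ for $p=0$), which one must show is again supported on $Z^{p+2}$. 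Once that is in hand the map is an isomorphism, the induction continues, and the ``furthermore'' for an arbitrary $C^{\bullet}$ satisfying (1)--(3) follows by transporting (i)--(iii) for $E^{\bullet}$ across the isomorphism.

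The main obstacle is exactly that last support claim, which interlocks the ``isomorphism'' and ``cohomology support'' halves of the induction: one must see that (1)--(3) force $H^p(C^{\bullet})=\Ker\big(\coker(d^{p-1}_C)\to C^{p+1}\big)$ to be supported in codimension $\ge p+2$, equivalently that $\coker(d^{p-1}_C)\to C^{p+1}$ is a $Z^{p+2}$-isomorphism and hence $C^{p+1}$ is canonically $\underline{H}^0_{Z^{p+1}/Z^{p+2}}$ of $\coker(d^{p-1}_C)$. Following \cite{Hart66} I would check this by a stalk-wise analysis at the codimension-$(p+1)$ points, where a skeleton sheaf is a genuine restricted product of the $x_*$ of its stalks and condition (2) makes the relevant stalk maps surjective; the only departure from Hartshorne's Zariski treatment is that this local analysis is phrased with restricted products in place of direct sums, which is precisely the form in which \Cref{kercoker0} has been prepared.
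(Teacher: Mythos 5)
Your existence construction and the inductive uniqueness argument coincide essentially verbatim with the paper's proof: the same recursion $E^{p+1}=\underline{H}^0_{Z^{p+1}/Z^{p+2}}(\coker d^{p-1})$ (the paper applies $\underline{H}^0_{Z/Z^1}$ twice in degree $0$, you once; this is immaterial since the iterate is already on the skeleton), and the same degree-by-degree transport of the isomorphism via the universal property in \Cref{kercoker0}(2). You are also right to isolate the one delicate point, which the paper's Step 3 passes over in silence: to invoke \Cref{kercoker0}(2) for $\coker(d^{p-1}_C)\to C^{p+1}$ one must know this map is a $Z^{p+2}$-isomorphism, and while condition (2) controls its cokernel, its kernel is $H^p(C^{\bullet})$ (resp.\ $\coker(F\to H^0(C^\bullet))$ for $p=0$), which is not controlled by (1)--(3).

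However, your proposed repair of that step does not work. Surjectivity of the stalk maps at codimension-$(p+1)$ points, which is what condition (2) gives, bounds the cokernel and says nothing about the kernel; and in fact the needed support claim is false under (1)--(3) alone. Take $X=\Spec$ of a DVR with generic point $\eta$, closed point $x$, and $F=j_!\Z_{\{\eta\}}$: both the Cousin complex $F\to\eta_*\Z\to x_*\Z$ and its truncation $F\to\eta_*\Z\to 0$ satisfy (1)--(3) (for the truncation, $C^1/\im(C^0)=0$ is vacuously supported on $Z^2=\varnothing$), yet they are not isomorphic; the truncation violates (iii), since $\coker(F\to H^0)=x_*\Z$ is not supported on $Z^2$. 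So the uniqueness induction must be run with the cohomological support conditions (ii)--(iii) as part of the characterization --- as in Hartshorne's Proposition 2.3, which is what the constructed complex satisfies and what downstream applications use --- rather than with (2)--(3). Under (i)--(iii) the map $\coker(d^{p-1}_C)\to C^{p+1}$ is a genuine $Z^{p+2}$-isomorphism at each stage and both your argument and the paper's go through; under (1)--(3) as literally stated, no argument can, because the conclusion fails.
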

\begin{remark}
We will see in the proof below that the complex we construct satisfies the properties (i), (ii) and (iii). 

Conversely, if we construct a complex $C^{\Dot}$ satisfying the properties (i), (ii) and (iii) then it satisfies the properties (1), (2) and (3). 

More precisely, (ii)+ (i) $\Rightarrow$ (2) and (iii)+ [(i) for $p=1$)] implies (3).

\end{remark}

\begin{proof}[Proof of \Cref{thm:unique-cousin}]
We construct such a complex by induction on $p\geq 0$. 

\noindent
\_{\bf Step 1:} ($p=0$) Let $C^0:=\_H^0_{Z/Z^1}(\_H^0_{Z/Z^1}(F))$ with the canonical map $$F\to \_H^0_{Z/Z^1}(F)\to C^0$$
We claim that kernel $\ker (F\to C^0)$ and $C^0/\im F$ is supported on $Z^1.$
Note that we have an exact sequence $$0\to \_\Gamma_{Z^1}F\to \ker(F\to C^0)\to \ker(\_H^0_{Z/Z^1}(F)\to C^0)$$  
Hence by above \Cref{kercoker} since $\ker(\_H^0_{Z/Z^1}(F)\to C^0)$ is supported on $Z^1$ and by definition  $\_\Gamma_{Z^1}F$ is supported on $Z^1$, we get that $\ker(F\to C^0)=\ker (F\to H^0(C^{\Dot}))$ is supported on $Z^1$. 
Similarly, there is an exact sequence
$$\dfrac{\_H^0_{Z/Z^1}(F)}{\im (F)} \to \dfrac{C^0}{\im(F)} \to \dfrac{C^0}{\_H^0_{Z/Z^1}(F)}\to 0$$ 
By~\Cref{kercoker}, we have that $\dfrac{C^0}{\_H^0_{Z/Z^1}(F)}$ is supported on $Z^1$.  

To see that $\dfrac{\_H^0_{Z/Z^1}(F)}{\im (F)}$ is supported on $Z^1$, note that there is an exact sequence~\cite[Page 219, Motif B (sheafified version)]{Hart66} 
$$0\to \_\Gamma_{Z^1}F \to F\to \_H^0_{Z/Z^1}(F)\to \_H^1_{Z^1}(F) $$
This implies that the cokernel $\dfrac{\_H^0_{Z/Z^1}(F)}{\im (F)}$ is a sub-object of  $\_H^1_{Z^1}(F) $, which is supported on $Z^1$. Hence the claim. 

Thus we have a map $$F\to C^0$$ such that the 
kernel $\ker (F\to C^0)$ and the cokernel $C^0/\im F$ are supported on $Z^1.$\\
\noindent
\_{\bf Step 2:} (Induction) Assume that the complex $C^{\Dot}$ is determined up to degrees $\leq p$ satisfying the desired properties. 
So we have $$F=C^{-1}\to C^0\to C^1\to\cdots \to C^{p-1}\to C^p.$$

Since by the inductive assumption $C^p/\im C^{p-1}$ has support in $Z^{p+1}$, we can apply the above~\Cref{kercoker0}  to $C^p/\im C^{p-1}$ with $Z=Z^{p+1}$ and $Z'=Z^{p+2}$. Let us define $$C^{p+1}:=\_H^0_{Z^{p+1}/Z^{p+2}}\bigg(\dfrac{C^p}{\im C^{p-1}}\bigg).$$  
Then the canonical map  
$$\dfrac{C^p}{\im C^{p-1}}\to C^{p+1}$$ has \begin{align*}
\ker &=\begin{cases}
H^0(C)/F \ \ for \ p=0 \\
 H^p(C^{\Dot}) \ \ for \ p>0
\end{cases}
\end{align*}
 and its cokernel $\dfrac{C^{p+1}}{\im C^{p}}$ both have supports in $Z^{p+2}$. 
Also $C^{p+1}$ lies on the skeleton $Z^{p+1}/Z^{p+2}$. 

Thus by induction we have constructed the augmented complex $$F\to C^{\Dot}$$ satisfying the desired properties. \\
\noindent
\_{\bf Step 3:} (Uniqueness) Assume that there is an augmented complex $$F\to C'^{\Dot}$$ satisfying the listed properties.
We will again show this by induction on $p$.

Assume that the uniqueness is true for complexes upto degrees $\leq p$. We need to show that it induces uniqueness for complexes upto degrees $\leq p+1$.

By inductive assumption, there is a unique isomorphism of augmented complexes $\tau_{\leq p}C^{\Dot}\to \tau_{\leq p}C'^{\Dot}$. 
$$\xymatrix{
F\ar[r] \ar@{=}[d] \ar[r] & \cdots \ar[r]  & C^p \ar[d]^{\simeq} \ar[r]  & C^{p+1} \\
F\ar[r] & \cdots \ar[r] & C'^p \ar[r] & C'^{p+1}
}$$
Hence there is a unique isomorphism $$C^p/\im C^{p-1} \to C'^p/\im C'^{p-1}.$$
Composing this we have a map $$C^p/\im C^{p-1} \to C'^p/\im C'^{p-1} \to C'^{p+1}$$ where by assumption on $C'^{\Dot}$, $C'^{p+1}$ lies on the skeleton $Z^{p+1}/Z^{p+2}$ , hence by~\Cref{kercoker0}, there is a unique $Z^{p+2}$-isomorphism 
$$C^{p+1}:=\_H^0_{Z^{p+1}/Z^{p+2}}\bigg(\dfrac{C^p}{\im C^{p-1}}\bigg)\to  C'^{p+1}.$$ It clearly commutes with previous differentials. 

We note that the $Z^{p+2}$-isomorphism $$C^{p+1}\to C'^{p+1}$$ is actually an isomorphism as 
$$\xymatrix{
C^{p+1}\ar[r]\ar[d]^{\simeq} \ar[d]& C'^{p+1} \ar[d]^{\simeq} \\
\_H^0_{Z^{p+1}/Z^{p+2}}(C^{p+1}) \ar[r]^{\simeq} & \_H^0_{Z^{p+1}/Z^{p+2}}(C'^{p+1})
}$$\end{proof}

\begin{definition} The Cousin complex $\Cz^*(F)$ of sheaf of abelian groups $F$ is the complex satisfying the properties as in~\Cref{thm:unique-cousin}.
 
\end{definition}

Let us recall from \Cref{num:cohomotopical_notation}, a $q$-truncated unstable augmented homotopical complex $$F\to C^0\to C^1\to \cdots C^{q-1}\Rightarrow C^q$$
is such that $C^p$ is an abelian group object for $0\leq p\leq q-2$, $C^{q-1}$ is a group object and $C^q$ is a pointed set with the group $C^{q-1}$ acting on $C^q$. 
\begin{theorem}\label{thm:unstable-unique-cousin}
Let $F$ be a sheaf of abelian groups on $X_t$. Let $q>1$. Let $F\to C^{\Dot}$ be a $q$-truncated unstable augmented homotopical complex $$F\to C^0\to C^1\to \cdots C^{q-1}\Rightarrow C^q$$ such that 
 \noindent
 \begin{enumerate}
 \item  for each $0\leq p\leq q$, $C^p$ is supported in $X^{(p)}$ (in the sense of \Cref{df:Cousin-cpx}).
 \item for each $0<p<q $ $C^p/\im(C^{p-1})$ has supports in $X^{\geq p+1}$ \ie isomorphic to $ \uG_{X^{\geq p+1}}\bigg(C^p/\im(C^{p-1})\bigg)$.
 \item the map $F\to H^0(C^{\Dot})$ has kernel supported on $X^{\geq 1}$ and $C^0/\im (F)$ is supported on $X^{\geq 1}$.
 \end{enumerate}
 Then there is a unique isomorphism $$\tau^{\leq q}\Cz^*(F)\to C^*$$ of augmented complexes.  
\end{theorem}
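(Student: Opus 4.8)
The plan is to mimic the proof of \Cref{thm:unique-cousin}, which is the non-truncated and fully abelian version of this statement. First I would note that the truncation $\tau^{\le q}\Cz^*(F)$ of the canonical Cousin complex constructed in \Cref{thm:unique-cousin} itself satisfies hypotheses (1)--(3): each Cousin term $\Cz^p(F)$ is of the shape $\uH^0_{X^{\ge p}/X^{\ge p+1}}(-)$, hence lies on the $X^{(p)}$-skeleton, i.e.\ is supported in $X^{(p)}$, which gives (1); while (2) and (3) are exactly the restriction to degrees $\le q$ of the defining properties recalled in \Cref{thm:unique-cousin}. Thus it suffices to show that any $q$-truncated unstable augmented homotopical complex $F\to C^*$ satisfying (1)--(3) is \emph{uniquely} isomorphic, as an augmented complex, to the fixed model $\Cz^*(F)$ cut off at degree $q$.

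This is carried out by induction on $0\le p\le q$, producing a unique isomorphism $\phi_{\le p}\colon \tau^{\le p}\Cz^*(F)\xrightarrow{\ \sim\ }\tau^{\le p}C^*$ of augmented $p$-truncated complexes. For $p=0$, hypotheses (1) and (3) say that $F\to C^0$ is an $X^{\ge1}$-isomorphism onto a sheaf on the $X^{(0)}/X^{\ge1}$-skeleton (its kernel is $\ker(F\to H^0(C^*))$, its cokernel is $C^0/\im F$), so \Cref{kercoker0}(2) together with \Cref{kercoker} yields the unique isomorphism $\Cz^0(F)=\uH^0_{X/X^{\ge1}}\bigl(\uH^0_{X/X^{\ge1}}(F)\bigr)\xrightarrow{\ \sim\ }C^0$ over $F$. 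For the step from $p$ to $p+1$ with $p+1\le q-1$: $\phi_{\le p}$ induces an isomorphism $\Cz^p(F)/\im\Cz^{p-1}(F)\xrightarrow{\ \sim\ }C^p/\im C^{p-1}$, and composing with the differential into $C^{p+1}$ --- which by (1) lands in a sheaf on the $X^{\ge p+1}/X^{\ge p+2}$-skeleton and by (2) has kernel $H^p(C^*)$ and cokernel $C^{p+1}/\im C^p$ supported on $X^{\ge p+2}$ --- \Cref{kercoker0}(2) produces a unique $X^{\ge p+2}$-isomorphism $\Cz^{p+1}(F)=\uH^0_{X^{\ge p+1}/X^{\ge p+2}}\bigl(\Cz^p(F)/\im\Cz^{p-1}(F)\bigr)\to C^{p+1}$, which is upgraded to a genuine isomorphism by applying $\uH^0_{X^{\ge p+1}/X^{\ge p+2}}$ once more and using that both sides, being supported in $X^{(p+1)}$, are fixed by that functor. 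Compatibility with the differentials and uniqueness are immediate from the uniqueness clause in \Cref{kercoker0}(2).

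The delicate point --- the one I expect to be the main obstacle --- is the top of the tower, degrees $q-1$ and $q$, where a priori $C^{q-1}$ is only a sheaf of (possibly non-abelian) groups and $C^q$ only a pointed sheaf carrying a $C^{q-1}$-action. Here the support hypotheses force everything back into the abelian setting: since $C^{q-1}$ is supported in $X^{(q-1)}$, its stalk at a codimension-$(q-1)$ point $x$ is a group $G_x$, and hypothesis (2) at $p=q-1$ forces the stalk of $C^{q-1}/\im C^{q-2}$ at $x$ to be trivial, i.e.\ the abelian group $(C^{q-2})_x$ surjects onto $G_x$, so $G_x$ --- hence $C^{q-1}$ --- is abelian, and the step $q-2\to q-1$ is purely abelian. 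For the final step $q-1\to q$ one applies the functor $\uG_{X^{=q}}(-)=\uH^0_{X^{\ge q}/X^{\ge q+1}}(-)$ to the descended differential $C^{q-1}/\im C^{q-2}\to C^q$: the target, being supported in $X^{(q)}$, is carried isomorphically onto itself, while the source is carried onto $\uH^0_{X^{\ge q}/X^{\ge q+1}}\bigl(\Cz^{q-1}(F)/\im\Cz^{q-2}(F)\bigr)=\Cz^q(F)$ via $\phi_{\le q-1}$; since codimension-$q$ points are maximal in $X^{\ge q}$, one checks as above (using \Cref{kercoker0} and that $\uG_{X^{\ge q+1}}(C^q)=*$) that the resulting canonical map $\Cz^q(F)\to C^q$ is an isomorphism, unique and compatible with the differential and the action. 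Assembling the $\phi_{\le p}$ gives the desired unique isomorphism $\tau^{\le q}\Cz^*(F)\to C^*$.
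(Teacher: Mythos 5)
Your proposal follows essentially the same route as the paper's proof: the abelian induction of \Cref{thm:unique-cousin} (via \Cref{kercoker0}) through degree $q-1$, then the pointed-set analogue of the skeleton lemma (\Cref{lem:unstable-skeleton}) applied to the descended differential $C^{q-1}/\im C^{q-2}\to C^q$ to produce the unique isomorphism in top degree, upgraded to a genuine isomorphism because both sides are supported in $X^{(q)}$. Your extra observation that hypotheses (1) and (2) at $p=q-1$ force $C^{q-1}$ to be abelian (its codimension-$(q-1)$ stalks are quotients of the abelian stalks of $C^{q-2}$) is a detail the paper leaves implicit, and is a worthwhile clarification rather than a genuinely different argument.
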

\begin{proof}
The proof follows along the same lines as in the proof of \Cref{thm:unique-cousin} to give a map 
$$\tau^{\leq q}\Cz^*(F)\to C^*$$ upto degrees $q-1$ \ie we have
$$\xymatrix{
F\ar[r] \ar@{=}[d] & \cdots \ar[r] &\Cz^{q-1}(F) \ar[r] \ar[d]^{\simeq} & \Cz^{q}(F) \\
F\ar[r] & \cdots \ar[r] & C^{q-1} \ar[r] & C^q
}$$
Hence there is a unique isomorphism $$\Cz^{q-1}/\im \Cz^{q-2} \to C^{q-1}/\im C^{q-2}$$ sheaves of pointed sets.
Composing this, we have a map $$\Cz^{q-1}/\im \Cz^{q-2} \to C^{q-1}/\im C^{q-2} \to C^{q}$$ where by assumption on $C^{\Dot}$, $C^{q}$ is supported in $X^{(q)}$ , hence by~\Cref{lem:unstable-skeleton}, there is a unique $X^{\geq q+1}$-isomorphism 
$$\Cz^{q}=\uG_{X^{\geq q}/X^{\geq q+1}}\bigg(\dfrac{\Cz^{q-1}}{\im \Cz^{q-2}}\bigg)\to  C^{q}.$$ It clearly commutes with previous differentials. 
We note that the $X^{\geq q+1}$-isomorphism $$\Cz^{q}\to C^{q}$$ is actually an isomorphism as 
$$\xymatrix{
\Cz^{q}\ar[r] & C^{q}  \\
\uG_{X^{\geq q}}(\Cz^{q}) \ar[u]^{\simeq} \ar[r]^{\simeq} & \uG_{X^{\geq q}}(C^{q})\ar[u]^{\simeq}
}$$
\end{proof}
\begin{lemma}\label{lem:unstable-skeleton}
Let $X \supset Z \supset Z'$ such that $Z'$ is stable under specialization and such that every $x\in Z-Z'$ is maximal (ordered by the specialization) in $Z$. 

Given a sheaf of pointed sets $F$ on $X$ with supports in $Z$, 
\noindent
\begin{myenum}
\item
the sheaf $\_{\Gamma}_{Z/Z'}(F)$ lies on the $Z/Z'$-skeleton of $X$ and  the canonical map $$F\to \_{\Gamma}_{Z/Z'}(F) $$ is $Z'$-isomorphism, \ie isomorphism when restricted to the open complement $Z-Z'$.

\item given a $Z'$-isomorphism $$F\to G$$ into a sheaf $G$ which lies on the  $Z/Z'$-skeleton, it factors uniquely (unique upto unique isomorphism) through the canonical map $$\xymatrix{
F\ar[rd] \ar[rr]& & G\\ 
&  \_{\Gamma}_{Z/Z'}(F) \ar[ru]^{\simeq}& 
}$$
\end{myenum}
\end{lemma}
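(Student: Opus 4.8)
The plan is to transpose the proof of the abelian statement \Cref{kercoker0} (that is, of \cite[Lemma 2.2]{Hart66}) to pointed $t$-sheaves of sets, along the lines already used to prove \Cref{prop:coniveau_ssp_refined}. The only genuine change is bookkeeping: in $\Sh(X_t)_*$ there are no exact sequences, only quotients by images, and a ``$Z'$-isomorphism'' must be read as a morphism of pointed $t$-sheaves inducing a bijection on every stalk at a point of $X\setminus Z'$. First I would record that, because $F$ is supported on $Z$, one has $\uG_Z(F)=F$, so that by definition $\uG_{Z/Z'}(F)$ is the pointed $t$-sheaf cokernel of the monomorphism $\uG_{Z'}(F)\hookrightarrow F$, i.e.\ the $t$-sheafification of $V\mapsto F(V)/\uG_{Z'}(F)(V)$; since filtered colimits and coequalizers of pointed sets commute, its stalk at a point $x$ is the pointed set $F_x/\uG_{Z'}(F)_x$.

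For part (1), the key step is to compute $\uG_{Z/Z'}(F)$ stalkwise: its stalk is trivial at points outside $Z$; it equals $F_x$ at a point $x\in Z\setminus Z'$; and at a point $x\in Z'$ it is the restricted product of the $F_\xi$ over the maximal points $\xi$ of $Z$ with $x\in\overline{\{\xi\}}$. This is obtained exactly as in the proof of \Cref{prop:coniveau_ssp_refined} and of the computation $\uG_{X^{=p}}(F)(U)\simeq{\prod}'_{x\in U^{(p)}}F_x$ used to introduce \Cref{df:Cousin-cpx}: a section of $\uG_{Z'}(F)$ has trivial germ at every point not in $Z'$, hence at every maximal point of $Z$ (all of which lie in $Z\setminus Z'$ by hypothesis) and at every point outside $Z$; therefore the canonical restriction map $F\to{\prod}'_{\xi\in Z\setminus Z'}\xi_*(F_\xi)$ annihilates $\uG_{Z'}(F)$ and factors through $\uG_{Z/Z'}(F)$, and conversely, using that every point of $Z\setminus Z'$ is maximal in $Z$ and that $Z'$ is stable under specialization, one checks that after shrinking a neighbourhood of $x$ so as to meet only the (finitely many, in the locally noetherian case) maximal points of $Z$ lying over $x$, a germ of $F$ at $x$ is determined, modulo a germ of a section supported on $Z'$, by its germs at those points. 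This identifies $\uG_{Z/Z'}(F)$ with ${\prod}'_{\xi\in Z\setminus Z'}\xi_*(F_\xi)$, i.e.\ shows it lies on the $Z/Z'$-skeleton, and the $Z'$-isomorphism claim for $F\to\uG_{Z/Z'}(F)$ is then immediate since $\uG_{Z'}(F)$ has trivial stalk at every point of $X\setminus Z'$ and the quotient is computed stalkwise.

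For part (2), given a $Z'$-isomorphism $u\colon F\to G$ with $G$ lying on the $Z/Z'$-skeleton, one first checks $u$ kills $\uG_{Z'}(F)$: a section of $\uG_{Z'}(F)$ has trivial germ at every maximal point of $Z$, and a section of the skeleton sheaf $G$ with trivial germ at all those points is trivial, so $u$ factors uniquely through the quotient $\uG_{Z/Z'}(F)$. The resulting map $\bar u$ is an isomorphism because it agrees with the isomorphism $u$ on stalks at $Z\setminus Z'$ — which, by the previous step, determines it everywhere — while at points of $Z'$ it is the corresponding isomorphism of restricted products; uniqueness of the factorization follows from the same ``determined by the generic stalks'' principle. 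The main obstacle is precisely this pointed-set bookkeeping: without an abelian category at our disposal, one must argue by hand that the restricted product is compatible with these quotients and that ``$Z'$-isomorphism'' (isomorphism on stalks off $Z'$) is the right replacement for Hartshorne's ``kernel and cokernel supported on $Z'$'', which is where the maximality hypothesis on the points of $Z\setminus Z'$ and the specialization-stability of $Z'$ are both used.
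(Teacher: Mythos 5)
The paper does not actually prove this lemma: it is the final statement of the appendix and is left unproved, as the evident pointed-set analogue of \Cref{kercoker0}, so there is no argument of the authors to measure yours against; I can only assess your proposal on its own terms.

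Your stalkwise strategy is the natural transposition of Hartshorne's argument, but it has a genuine gap at its central claim, namely that for $x\in Z'$ the induced map
$$
F_x/\uG_{Z'}(F)_x \longrightarrow {\prod}'_{\xi\in Z-Z',\ x\in\overline{\{\xi\}}} F_\xi
$$
is a bijection. What you verify (``a germ at $x$ is \emph{determined}, modulo a germ supported on $Z'$, by its germs at the maximal points'') is at best the trivial-kernel half; surjectivity is never addressed, and it fails in general. Take $X$ the spectrum of a discrete valuation ring with closed point $x$ and generic point $\eta$, $Z=X$, $Z'=\{x\}$, and $F=j_!A$ the extension by the basepoint of a nontrivial pointed set $A$ along $j:\{\eta\}\hookrightarrow X$. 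All hypotheses of the lemma hold, $\uG_{Z'}(F)=*$, hence $\uG_{Z/Z'}(F)=F$ has trivial stalk at $x$, while $\eta_*(F_\eta)$ has stalk $A$ there; so $\uG_{Z/Z'}(F)$ does not lie on the $Z/Z'$-skeleton and your comparison map is only a map with trivial kernel, not an isomorphism (a two-section variant shows that actual injectivity, as opposed to mere triviality of the kernel, can fail as well). The root of the problem is that \cite[Lemma 2.2]{Hart66}, which this statement transposes, is about the derived object $\_{H}^0_{Z/Z'}(F)$, whose deviation from the naive quotient $\uG_Z(F)/\uG_{Z'}(F)$ is exactly controlled by $\_{H}^1_{Z'}(F)$ (see the Motif B sequence quoted in Step 1 of the proof of \Cref{thm:unique-cousin}); for pointed sets there is no such correction term, and the plain cokernel $\uG_{Z/Z'}(F)$ is in general only a subobject of the skeleton sheaf ${\prod}'_{\xi}\xi_*(F_\xi)$. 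To repair the argument you must either add a hypothesis guaranteeing surjectivity onto generic stalks (e.g.\ $F$ flasque, or $F$ a quotient of a sheaf lying on a skeleton, which is what actually holds in the unique application of the lemma, inside the proof of \Cref{thm:unstable-unique-cousin}, where $F=\Cz^{q-1}/\im\Cz^{q-2}$), or replace $\uG_{Z/Z'}(F)$ in the statement by the sheaf ${\prod}'_{\xi\in Z-Z'}\xi_*(F_\xi)$ itself; in either case the injectivity and surjectivity verifications still need to be written out.
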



\bibliographystyle{amsalpha}
\bibliography{UnstConiveau}

\end{document}